\documentclass[11pt]{amsart} 
\usepackage{amsmath,amssymb,latexsym, amscd}
\usepackage[all]{xy}
\usepackage{todonotes}

\usepackage{hyperref}

\definecolor{dark_purple}{rgb}{0.4, 0.0, 0.4}
\definecolor{dark_green}{rgb}{0.0, 0.7, 0.0}



\def\H{{\mathcal H}}
\def\HH{{\underline{\mathcal H}}} 

\def\N{{\mathbb N}}

\def\C{{\mathbb C}}
\def\R{{\mathbb R}}
\def\HP{{\mathbb H}P}
\def\Z{{\mathbb Z}}

\def\Cc{{\mathbf C}} 
\def\CC{\underline{\mathbb C}} 
\def\cc{{\mathbf c}} 
\def\DD{{\mathbf D}} 
\def\EE{{\mathbf E}} 
\def\FF{{\mathbf F}} 
\def\GG{{\mathbf G}} 
\def\BB{{\mathbf B}} 
\def\ee{{\mathbf e}} 
\def\ep{w} 
\def\epp{\mathbf w} 

\def\v{\varphi}




\newcommand{\Ker}[1]{\mathsf{Ker}~ }

\newcommand{\codim}[1]{\mathsf{codim}~ }

\newtheorem{theorem}{Theorem}[section]
\newtheorem{proposition}  [theorem]  {Proposition} 
\newtheorem{lemma}        [theorem]  {Lemma} 
\newtheorem{example}      [theorem]  {Example} 
\newtheorem{remark}		  [theorem]  {Remark} 



\newcommand{\wt}{\widetilde}

\DeclareMathOperator{\image}{Im} 
\DeclareMathOperator{\U}{U} 


\DeclareMathOperator{\spa}{span} 

\DeclareMathOperator{\rk}{rank} 
\DeclareMathOperator{\antidiag}{antidiag} 

\newcommand{\ii}{\mathrm{i}} 

\newcommand{\gl}{\mathfrak{gl}}

\newcommand{\CP}{{\mathbb C}P}
\newcommand{\RP}{{\mathbb R}P}

\newcommand{\pa}{\partial}

\newcommand{\zbar}{\bar{z}}

\newcommand{\ov}{\overline}


\newcommand{\al}{\alpha}
\newcommand{\be}{\beta}
\newcommand{\ga}{\gamma}
\newcommand{\la}{\lambda}

\allowdisplaybreaks

\begin{document}

\title[Diagrams and harmonic maps, revisited]{Diagrams and harmonic maps, revisited}
\author[R. Pacheco]{Rui Pacheco}
\address{Centro de Matem\'{a}tica e Aplica\c{c}{\~{o}}es (CMA-UBI), Universidade da Beira Interior, 6201 -- 001
Covilh{\~{a}}, Portugal.}
\email{rpacheco@ubi.pt}
\thanks{The first author was partially supported by Funda\c{c}\~{a}o para a Ci\^{e}ncia e Tecnologia through the project UID/MAT/00212/2019.}
\author[J.C. Wood]{John C. Wood}
\address{School of Mathematics, University of Leeds, LS2 9JT, G.B.}
\email{j.c.wood@leeds.ac.uk}
\thanks{}

\keywords{harmonic maps, Grassmannian manifolds, Riemann surfaces, shift-invariant subspaces, finite uniton number, finite type}
\subjclass[2010]{Primary 58E20; Secondary 47B32, 30H15, 53C43}

\begin{abstract} We extend many known results for harmonic maps from  the 2-sphere into a Grassmannian  to harmonic maps of finite uniton number from an arbitrary Riemann surface. Our method relies on  a new theory of nilpotent cycles arising from the diagrams  of F.E.~Burstall and the second author associated to such harmonic maps; these properties arise from a criterion for finiteness of the uniton number  found recently by the authors with A.~Aleman.  Applications include a new classification result on minimal surfaces of constant curvature and a constancy result for finite type harmonic maps.
\end{abstract}

\maketitle

\section{Introduction and Preliminaries} \label{sec:intro}
\subsection{Results}
Recall that a smooth map $\v$ between two Riemannian manifolds $(M,g)$ and $(N,h)$ is said to be \emph{harmonic} if it is a critical point of the energy functional
$$ E(\v, D)=\frac{1}{2}\int_D|d\v|^2\omega_g$$
for any relatively compact $D$ in $M$, where $\omega_g$ is the volume measure, and $|d\v|^2 $ is the Hilbert--Schmidt  norm of the differential  of $\v$.

An important class of harmonic maps is those from a Riemann surface to a Lie group or symmetric space.   In particular K.\ Uhlenbeck \cite{uhlenbeck} defined the notion of \emph{uniton number} for harmonic maps to the unitary group and showed it to be finite for maps from the Riemann sphere $S^2$ --- see G.~Valli \cite{valli} for a simpler proof of this.  The finiteness has huge consequences for the structure of those harmonic maps, developed in many papers since then.  In particular, all such maps are (weakly) conformal which means that they give \emph{minimal branched immersions} in the sense of \cite{G-O-R}.  It is thus important to know whether a harmonic map from an arbitrary Riemann surface is of finite uniton number.
In \cite{APW1}, the present authors and A.\ Aleman developed a useful criterion for this  finiteness; the criterion uses the extended solutions of Uhlenbeck and their interpretation by G.\ Segal \cite{segal} as varying shift-invariant subspaces of a Hilbert space,  but gives a test which depends only on the harmonic map, and is local, see Proposition \ref{S-stabilizes} below.

For maps to a (complex) Grassmannian $G_k(\C^n)$, this \emph{finiteness criterion} can be interpreted in terms of \emph{diagrams  for harmonic maps} developed by F.~E.\ Burstall and the second author \cite{burstall-wood}.
In this paper we study and apply the criterion to extend many known results for harmonic maps from the Riemann sphere to harmonic maps of finite uniton number from an arbitrary Riemann surface. 
 Note that our methods do not apply, in general, to harmonic maps which are \emph{strongly isotropic} i.e., \emph{of infinite isotropy order}; however these are automatically of finite uniton number and are well understood, see \cite{erdem-wood, burstall-wood}.

(i) We extend descriptions of harmonic maps in \cite{burstall-wood} and \cite{bahy-wood-G2} to show how harmonic maps of finite uniton number from any Riemann surface into $G_2(\C^n)$ can be obtained from three basic types (Theorem \ref{BuWo 3.3}).  This restricts to descriptions of maps into
 a real Grassmannian $G_2(\R^n)$ and  quaternionic projective space  $\HP^n$ extending \cite{bahy-wood-G2} and \cite{bahy-wood-HPn}. Our work depends on showing that  certain \emph{cycles} (or \emph{circuits}) of second fundamental forms in a diagram associated to the harmonic map, whose composition is nilpotent on $S^2$, because of  the vanishing of holomorphic differentials on $S^2$, \emph{remain nilpotent for harmonic maps of finite uniton number from any Riemann surface, as a result of the finiteness criterion}.
Our results give extensions to such harmonic maps of theorems of J.~Ramanathan \cite{ramanathan} and A.~R.\ Aithal \cite{aithal-G25} for harmonic maps into $G_2(\C^4)$ and $G_2(\C^5)$, Aithal for $\HP^2$ \cite{aithal-HP2}, and A.~Bahy-El-Dien and the second author for $G_2(\R^n)$ (or the complex quadric $Q_{n-2}$) \cite{bahy-wood-G2} and for $\HP^n$ \cite{bahy-wood-HPn}.

(ii) We develop the construction of such \emph{nilpotent cycles} to other Grassmannians.  This leads to two families of such cycles (Propositions \ref{ce^s} and \ref{nilorder p}), plus two `hybrid' cycles (Propositions \ref{cece^2} and \ref{c^2ece}), which we hope will be the start of a larger theory of  nilpotent cycles.   We use our nilpotent cycles to
extend the description \cite{burstall-wood} of harmonic maps from $S^2$ to $G_k(\C^n)$ ($k=3,4,5$) to finite uniton number harmonic maps from any Riemann surface (Theorem \ref{BuWo 4.2}).

(iii) A second important type of harmonic map is those of \emph{finite type}. We show that \emph{a harmonic map from a torus to a complex Grassmannian which is simultaneously of finite uniton number and finite type is constant} (Theorem \ref{finite type constant}).  This was previously known only for maps into a sphere or complex projective space \cite{pacheco-tori}.  We prove it by extending a theorem of J.~Wolfson \cite{wolfson} to show that \emph{the harmonic sequence of a harmonic map into a Grassmannian terminates if and only if it is of finite uniton number} (Theorem \ref{th:wolfson}).

 (iv) Although more recent work has shown how harmonic maps of finite uniton number can be constructed explicitly from holomorphic data, the above methods are more suited to answering geometrical questions such as finding \emph{constant curvature or homogeneous minimal surfaces}, see for example \cite{FeiHe,HeJiao2014, HeJiao,JiaoLi Qn, JiaoLi Q5,JiaoXu, LiHeJiaoQ3, PengWangXu}. These papers took the domain to be $S^2$, but the methods in this paper allow some generalization of these results to harmonic maps of finite uniton number from any Riemann surface, see \S \ref{subsec:const-curv}.

\subsection{Background}\label{subsec:background}
We give only the background needed for this paper.  For  more details on  the finiteness criterion plus general background from a functional analytic point of view see \cite{APW1}; see also
\cite{eells-lemaire, urakawa} for the general theory  of harmonic maps and \cite{svensson-wood-unitons, wood-60} for some theory relevant to this paper.

 \emph{Throughout the rest of this paper, $M$ will denote a (connected) Riemann surface, not necessarily compact.}
For a smooth map $\v:M \to \U(n)$,
we consider the matrix-valued 1-form
\begin{equation*}
\tfrac{1}{2}\v^{-1}d\v := A^\v_z d z+A^\v_{\bar{z}}d\bar{z},
\end{equation*}
where $z$ is a local (complex) coordinate on $M$. Then (see \cite{uhlenbeck}) \emph{$\v$  is harmonic if and only if}
\begin{equation*}
(A^\v_z)_{\bar{z}}+(A^\v_{\bar{z}})_z=0.
\end{equation*}

Recall \cite{uhlenbeck} that an \emph{extended solution} is a smooth map $\Phi:S^1\times M\to \U(n)$ satisfying $\Phi(1,\cdot)=I$ and such that, for every local (complex) coordinate $z$ on $M$, there are $\gl(n,\C)$-valued maps $A_z$ and $A_{\bar{z}}$ for which
\begin{equation}\label{extsol}
\Phi(\la,\cdot)^{-1}d\Phi(\la,\cdot)=(1-\la^{-1})A_z d z+(1-\la)A_{\bar{z}}d\bar{z}.
\end{equation}
We can consider $\Phi$ as a map from $M$ into the \emph{loop group} of $\U(n)$ defined by
$\Omega\U(n) = \{\gamma:S^1 \to \U(n) \text{ smooth}: \ga(1) = I\}$. If $\Phi$ is an extended solution, then
 $\v=\Phi(-1,\cdot)$ is a harmonic map with (*) $A^\v_z=A_z$ and $A^\v_{\bar{z}}=A_{\bar{z}}$.
 Conversely, given a harmonic map $\v:M\to \U(n)$, an extended solution  $\Phi$
is said to be \emph{associated} to $\v$ if
$\Phi(-1,\cdot)=u\v$ for some constant $u\in \U(n)$, equivalently, $\Phi$ satisfies (*).
If $M$ is simply connected, the existence of an extended solution $\Phi$ associated to a smooth map is equivalent to harmonicity of the map, see \cite{uhlenbeck}; $\Phi$ is then unique up to multiplication from the left by a \emph{constant loop}, i.e., a $\U(n)$-valued function on $S^1$, independent of $z\in M$.

Let $\v:M \to \U(n)$ be a  smooth map.
Recall that the \emph{(Koszul--Malgrange) holomorphic structure induced by $\v$} is the unique holomorphic structure on the trivial bundle
$\CC^n := M \times \C^n$ with $\bar{\pa}$-operator given locally by the derivation
$D^{\v}_{\bar z} = \pa_{\zbar} + A^{\v}_{\zbar}$;  we denote the resulting holomorphic vector bundle by $(\CC^n, D^{\v}_{\bar z})$.
Here $\pa_{\zbar}$ denotes ordinary (`flat') differentiation with respect to $\zbar$ in $\C^n$.
Note that \cite{uhlenbeck} \emph{a map $\v:M\to \U(n)$ is harmonic if and only if
$A^{\v}_z$ is a holomorphic endomorphism of the holomorphic vector bundle $(\CC^n, D^{\v}_{\bar z})$}.  In particular, its image
 and kernel form holomorphic subbundles of
$(\CC^n, D^{\v}_{\bar z})$, at least off the discrete subset of $M$ where the rank of $A^{\v}_z$ drops (i.e., does not equal its maximum value), and these subbundles are independent of the local complex coordinate $z$.
By \emph{filling out zeros} as in \cite[Proposition 2.2]{burstall-wood}, these
image and kernel subbundles can be extended to holomorphic subbundles over the whole
of $M$, which we shall denote by
$\image A^{\v}_z$ and $\ker A^{\v}_z$, respectively.

Next, recall that a subbundle $\alpha$ of $\CC^n$ is said to be a \emph{uniton for  a harmonic map } $\v:M\to\U(n)$ if it is
(i) holomorphic with respect to the Koszul--Malgrange holomorphic structure induced by $\v$, i.e.,
$D^{\v}_{\zbar}(\sigma)\in\Gamma(\alpha)\quad(\sigma\in\Gamma(\alpha));$
(ii) closed under the endomorphism $A^{\v}_z$, i.e.,
$A^{\v}_z(\sigma)\in\Gamma(\alpha)\quad(\sigma\in\Gamma(\alpha))$
(here $\Gamma(\cdot)$ denotes the space of smooth sections of a bundle).
Uhlenbeck showed \cite{uhlenbeck} that, \emph{if a subbundle $\alpha\subset\CC^n$ is a uniton for a harmonic map $\v$, then
\begin{equation} \label{uniton}
\wt\v=\v(\pi_{\alpha}-\pi_{\alpha}^{\perp})  \text{ or, equivalently, } -\v(\pi_{\alpha}-\pi_{\alpha}^{\perp}),
\end{equation}
is also harmonic};
$\wt\v$ is said to be obtained from $\v$ by \emph{adding the uniton $\al$}.
Here $\pi_{\alpha}$ (resp.\ $\pi_{\alpha}^{\perp}$) denotes orthogonal projection onto the subbundle $\al$ (resp.\ its orthogonal complement  $\al^{\perp}$ in $\CC^n$).
Uhlenbeck showed further that, \emph{if $\Phi$ is an extended solution associated to $\v$, then $\alpha$ is a uniton for $\v$ if and only if  $\wt\Phi=\Phi(\pi_{\alpha}+\lambda\pi_{\alpha}^{\perp})$ is an extended solution
(associated to $\wt\v=\v(\pi_{\alpha}-\pi_{\alpha}^{\perp})$\,)}.
We shall therefore also say that $\alpha$ is a \emph{uniton for\/ $\Phi$}.

A harmonic map $\v$ is said to be of \emph{finite uniton number} if, for some $r \in \N =\{0,1,2,\ldots \}$,
\begin{equation*} \label{phi-fact}
\v = \v_0(\pi_{\alpha_1}-\pi_{\alpha_1}^{\perp})\cdots(\pi_{\alpha_r}-\pi_{\alpha_r}^{\perp})
\end{equation*}
where $\v_0 \in \U(n)$ is constant and,  for each $i = 1,\ldots, r$, the subbundle $\alpha_i$ is a uniton for the partial
product
$\v_{i-1} = \v_0(\pi_{\alpha_1}-\pi_{\alpha_1}^{\perp})\cdots
	(\pi_{\alpha_{i-1}}-\pi_{\alpha_{i-1}}^{\perp})$.
Equivalently, $\v$ is of finite uniton number if and only if it has an associated extended solution of the form
\begin{equation}\label{Phi-fact2}
\Phi=I (\pi_{\alpha_1}+\lambda\pi_{\alpha_1}^{\perp})\cdots(\pi_{\alpha_r}+\lambda\pi_{\alpha_r}^{\perp}),
\end{equation}
where $I$ denotes the identity matrix and, for each $i$,
$\alpha_i$ is a uniton for
$
\Phi_{i-1} = I(\pi_{\alpha_1}+\lambda\pi_{\alpha_1}^{\perp})\cdots
(\pi_{\alpha_{i-1}}+\lambda\pi_{\alpha_{i-1}}^{\perp}).
$
Any polynomial extended solution $\Phi$ has a factorization \eqref{Phi-fact2} into unitons \cite{uhlenbeck}, thus \emph{a harmonic map $\v$ from a Riemann surface to $\U(n)$ is of finite uniton number if and only if it has a  polynomial associated extended solution}.

\subsection{The criterion for finiteness of the uniton number} \label{subsec:APW criterion}

In \cite[Theorem 3.6]{APW1}, a necessary and sufficient criterion was given for a harmonic map $\v:M \to \U(n)$ to be of finite uniton number.  This may be stated as follows.
Motivated by Segal's Grassmannian model \cite{segal}, see \S \ref{sec:finite type}, we consider the Hilbert space $L^2(S^1,\C^n)$ of all $L^2$ functions $S^1 \ni \la \mapsto f(\la) \in \C^n$ and extend all operators on (sections of) $\CC^n$ to the trivial bundle
$\HH := M \times L^2(S^1,\C^n)$.
Then, for any local coordinate $z$, we define a differential operator $T=T_{\v}$ on sections of  $\HH$  by
\begin{equation} \label{T}
T =T_{\v} = \pa_z + (1-\lambda^{-1})A^{\v}_z = -\lambda^{-1}A^{\v}_z + D^{\v}_z\,.
\end{equation}
 Here $\pa_z$ denotes ordinary (flat) differentiation with respect to  $z$ and $D^{\v}_z$ denotes the derivation $D^{\v}_z = \pa_z + A^{\v}_z$.
For any $i=1,2,\ldots$,
write $T^i = T \circ \cdots \circ T$ ($i$ copies); note that $T^i$ is polynomial in $\la^{-1}$ of degree \emph{at most} $i$.
Then we have from \cite[Theorem 3.6 and Corollary 4.3]{APW1}:

\begin{proposition} \label{S-stabilizes}\emph{[Finiteness criterion]} Let $\v:M \to \U(n)$ be a harmonic map,
Then $\v$ is of finite uniton number if and only if the maximum power of $\la^{-1}$ in  $T_{\v}^i$ stays bounded for $i\in \N$.
\end{proposition}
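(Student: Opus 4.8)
The plan is to reconstruct \cite[Theorem~3.6 and Corollary~4.3]{APW1}, organised around one elementary but decisive remark: whenever $\v$ has a locally defined associated extended solution $\Phi$, the operator $T_\v$ of \eqref{T} is just $\Phi$-conjugation of plain $\pa_z$. Indeed, as operators on sections of $\HH$,
\[
T_\v \;=\; \Phi^{-1}\circ\pa_z\circ\Phi ,\qquad\text{hence}\qquad T_\v^{\,i}\;=\;\Phi^{-1}\circ\pa_z^{\,i}\circ\Phi\quad(i\ge1),
\]
since $\Phi^{-1}\pa_z(\Phi h)=(\Phi^{-1}\pa_z\Phi)h+\pa_z h=(1-\la^{-1})A^\v_z h+\pa_z h$ by \eqref{extsol} and~(*); the matching identity $\Phi^{-1}\circ\pa_{\zbar}\circ\Phi=\pa_{\zbar}+(1-\la)A^\v_{\zbar}$ records that harmonicity of $\v$ is precisely flatness of the loop of connections whose $(1,0)$-part is $T_\v$. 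Since $\v$ is harmonic such a $\Phi$ exists on every simply connected coordinate patch \cite{uhlenbeck}, and since the criterion only concerns the growth of $i\mapsto\deg_{\la^{-1}}T_\v^{\,i}$, a condition which is manifestly local (and, as one checks, independent of the choice of $z$), it is enough to argue on one such patch.

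The ``only if'' direction is then immediate. If $\v$ is of finite uniton number it has a polynomial associated extended solution $\Phi$ (last remark of \S\ref{subsec:background}), of degree $r$ say in $\la$, so that $\Phi^{-1}=\Phi^{*}$ on $S^{1}$ is polynomial of degree $r$ in $\la^{-1}$. Write $\HH_+=M\times H_+$, where $H_+\subset L^{2}(S^{1},\C^{n})$ is the Hardy space of boundary values of holomorphic maps $\D\to\C^{n}$. For a section $h$ of $\HH_+$, the section $\Phi h$ — and therefore $\pa_z^{\,i}(\Phi h)$ — involves only nonnegative powers of $\la$, so $T_\v^{\,i}h=\Phi^{-1}\pa_z^{\,i}(\Phi h)$ involves only powers $\ge-r$; thus $\deg_{\la^{-1}}T_\v^{\,i}\le r$ for all $i$. (Equivalently — and this is where \cite[Corollary~4.3]{APW1} enters — if $\wt\v$ is obtained from $\v$ by adding a uniton $\al$ then $T_{\wt\v}=G^{-1}\circ T_\v\circ G$ with $G=\pi_\al+\la\pi_\al^{\perp}$, and passing from $T_\v$ to $T_{\wt\v}$ raises $\deg_{\la^{-1}}T^{\,i}$ by at most one, uniformly in $i$, while a constant map has $\deg_{\la^{-1}}T^{\,i}\equiv 0$.)

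For the ``if'' direction, suppose $\deg_{\la^{-1}}T_\v^{\,i}\le N$ for all $i$. Following Segal's model \cite{segal} (see \S\ref{sec:finite type}) I would attach to $\v$ the subspace-valued map $z\mapsto W_\v(z):=\overline{\mathrm{span}}_{\C}\bigl\{(T_\v^{\,i}\sigma)(z):i\ge 0,\ \sigma\ \text{a section of}\ \HH_+\bigr\}$. Because $T_\v$ commutes with multiplication by $\la$ we have $\la W_\v\subseteq W_\v$, and clearly $H_+\subseteq W_\v(z)$; the hypothesis forces $W_\v(z)\subseteq\la^{-N}H_+$, so the increasing filtration $F_i:=\sum_{j\le i}T_\v^{\,j}(\Gamma(\HH_+))$ stabilises, $W_\v$ is a $T_\v$-invariant subbundle (filling out any rank jumps as in \cite[Prop.~2.2]{burstall-wood}), and it is \emph{algebraically} commensurable with $H_+$. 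The real content of the proof is to check that $W_\v$ meets the remaining requirements of Segal's correspondence — holomorphicity for the flat $\pa_{\zbar}$-operator, and $\pa_z\Gamma(W_\v)\subseteq\Gamma(\la^{-1}W_\v)$ — so that it determines a genuine extended solution, which one then verifies to be associated to $\v$ (up to the harmless interchange $\la\leftrightarrow\la^{-1}$, $\v\leftrightarrow\v^{-1}$, under which finite uniton number is invariant). That extended solution is a Laurent polynomial in $\la$, so $\v$ is of finite uniton number by the last remark of \S\ref{subsec:background}.

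The main obstacle is exactly this last verification: that the abstractly defined $W_\v$ really sits in the algebraic loop Grassmannian as an extended solution of $\v$ — equivalently, that, besides being $\la$- and $T_\v$-invariant, it is closed (in the appropriate twisted sense) under $A^\v_z$. This is where the harmonicity equation $(A^\v_z)_{\zbar}+(A^\v_{\zbar})_z=0$ genuinely enters, through the facts that $A^\v_z$ is a holomorphic endomorphism of $(\CC^{n},D^\v_{\zbar})$ and that the loop $d+(1-\la^{-1})A^\v_z\,dz+(1-\la)A^\v_{\zbar}\,d\zbar$ of connections is flat; it needs the careful local computations of \cite{APW1}. Everything else is routine: the ``only if'' half drops out of the conjugation identity, and keeping track of the shift $N$ and of the various powers of $\la$ versus $\la^{-1}$ is pure bookkeeping.
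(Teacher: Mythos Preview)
The paper does not prove this proposition: it is quoted directly from \cite[Theorem~3.6 and Corollary~4.3]{APW1} and used as a black box throughout. So there is no ``paper's own proof'' to compare against; your proposal is a reconstruction of the cited result.

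As such a reconstruction, it is essentially on the right track. The conjugation identity $T_\v=\Phi^{-1}\circ\pa_z\circ\Phi$ is indeed the pivot of the argument, and your ``only if'' direction is complete and correct: with $\Phi$ polynomial of degree $r$ in $\la$ and $\Phi^{-1}=\Phi^{*}$ polynomial of degree $r$ in $\la^{-1}$, the degree bound $\deg_{\la^{-1}}T_\v^{\,i}\le r$ follows at once. For the ``if'' direction you set up the right object --- the subbundle $W_\v$ generated from $\HH_+$ by iterating $T_\v$ --- and correctly observe that the hypothesis squeezes it between $\HH_+$ and $\la^{-N}\HH_+$, forcing stabilisation. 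You are also honest that the substantive work lies in verifying that $W_\v$ is an extended solution associated to $\v$ (holomorphicity, the shift condition $\pa_z\Gamma(W_\v)\subseteq\la^{-1}\Gamma(W_\v)$, and the link back to $\v$ via $\Phi(-1,\cdot)$), and you defer this to \cite{APW1}. That is fair, but it means your ``if'' direction remains a sketch rather than a proof: the Segal-model checks you list are exactly where the argument of \cite{APW1} does its work, and they do not fall out of the conjugation identity alone. If you want a self-contained proof you would need to supply those computations; otherwise, what you have written is an accurate high-level summary of how \cite{APW1} proceeds.
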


Also note from \cite[Corollary 4.2]{APW1} that \emph{$\v$ has finite uniton number if and only if its restriction to some open set has finite uniton number}.
In \cite{APW1}, Proposition \ref{S-stabilizes} is called the \emph{bounded powers criterion}.

An immediate consequence of Proposition \ref{S-stabilizes} is that \emph{a harmonic map $\v:M \to \U(n)$ of finite uniton number is \emph{nilconformal}, i.e., has $A^{\v}_z$ nilpotent}.   This follows from holomorphicity of $A^{\v}_z dz$ when $M=S^2$ \cite{uhlenbeck}; for general $M$, it is shown in \cite[Example 4.2]{svensson-wood-twistor}, or follows from the constructions involving nilpotent Lie algebras in \cite{burstall-guest}. In particular the trace of $(A^{\v}_z)^2$ is zero, which says that \emph{a harmonic map of finite uniton number is weakly conformal and so is a minimal branched immersion}, as mentioned in the introduction.

\begin{remark} \label{rem:local}
In the same way \cite{burstall-wood} that $A^{\v}_z dz$ gives a \emph{globally defined} $(1,0)$-form, $T_{\v} dz$ gives a globally defined differential operator.  However, as in \cite{burstall-wood}, for simplicity of notation, we work in a local complex coordinate chart.   All our constructions are independent of that chart.
\end{remark}

\subsection{Maps into a Grassmannian} \label{subsec:Grass}

We now discuss \emph{harmonic maps into Grassmannians} and some geometrical methods for studying them.
For $k \in \{0,1,\ldots, n\}$, let $G_k(\C^n)$ denote the Grassmannian of $k$-dimen\-sional subspaces of $\C^n$. This can be embedded in $\U(n)$ by the
\emph{Cartan embedding} \cite[Proposition 3.42]{cheeger-ebin} which is given, up to left-multiplication by a constant unitary matrix, by $G_k(\C^n) \ni \alpha \mapsto \pi_{\alpha} - \pi_{\alpha}^{\perp} \in \U(n)$.  Since the Cartan embedding is totally geodesic,
by the composition law \cite[\S 5]{eells-sampson}
it preserves harmonicity, i.e.,
\emph{a smooth map $\v$ into $G_k(\C^n)$ is harmonic if and only if its composition with the Cartan embedding is harmonic into $\U(n)$}.

We may consider a smooth map $\v:M \to G_k(\C^n)$ into a Grassmannian as a subbundle, still denoted by $\v$, of the trivial bundle $\CC^n$, with fibre at $z \in M$ given by the $k$-dimensional subspace $\v(z)$.  Note that  $\v$ is harmonic if and only if the map given by the orthogonal complement $\v^{\perp}$ is harmonic; further the two maps have the same Cartan embedding up to sign.

We need some notation from \cite{burstall-wood}:
for two mutually orthogonal subbundles $\v$ and $\psi$ of $\CC^n$, and local complex coordinate $z$ on $M$,
we define the \emph{($\pa'$-)second fundamental form of $\v$ in $\v \oplus \psi$} by
$A'_{\v, \psi}(s) = \pi_{\psi} \circ \pa_z s$ \ ($s \in \Gamma(\v)$); note that this is tensorial. As a special case, we write $A'_{\v} = A'_{\v, \v^{\perp}}$, then a direct calculation from the definitions shows that
$(A^{\v}_z)|_{\v} = -A'_{\v}$ and
$(A^{\v}_z)|_{\v^{\perp}} = -A'_{\v^{\perp}}$.  Similarly, we define the \emph{$\pa''$-second fundamental form of $\v$ in $\v \oplus \psi$} by
$A''_{\v, \psi}(s) = \pi_{\psi} \circ \pa_{\zbar} s$ \ ($s \in \Gamma(\v)$), and write $A''_{\v} = A''_{\v, \v^{\perp}}$, then $(A^{\v}_{\zbar})|_{\v} = -A''_{\v}$ and $(A^{\v}_{\zbar})|_{\v^{\perp}} = -A''_{\v^{\perp}}$.
Note that $A''_{\psi, \v}$ is minus the adjoint of $A'_{\v, \psi}$ and (see \cite[Lemma 1.3]{burstall-wood}, \cite[Theorem 2.1]{chern-wolfson}), \emph{$\v$ is harmonic if and only if $A'_{\v}$ is holomorphic, and this holds if and only if $A''_{\v}$ is antiholomorphic}.

Let $\v:M \to G_k(\C^n)$ be a smooth map and $\alpha$
a subbundle of $\CC^n$.  Then \cite{uhlenbeck},
$\wt{\v} = -\v(\pi_{\alpha}-\pi_{\alpha}^{\perp})$ has image in a Grassmannian if and only if $\pi_{\alpha}$ commutes with $\pi_{\v}$, and this holds if and only if $\alpha$ is the direct sum of subbundles $\beta$ of $\v$ and $\gamma$ of $\v^{\perp}$, in which case
$\wt{\v} = (\v \ominus \beta) \oplus \gamma$  --- for this formula, we need to choose the minus sign in \eqref{uniton}; note that $\pm\wt{\v}$ are orthogonal complements.
Then \emph{$\alpha$ is a uniton for $\v$ if and only if $\be$ and $\ga$ are holomorphic subbundles of $\v$ and $\v^{\perp}$, respectively, with $A'_{\v}(\be) \subseteq \ga$ and $A'_{\v^{\perp}}(\ga) \subseteq \be$}.

An important example is when we choose $\be \subseteq \ker (A'_{\v^{\perp}} \!\circ\! A'_{\v})$ and set $\ga = A'_{\v}(\be) := \image(A'_{\v}|_{\be})$. Since $A'_{\v}$ and $\be$ are holomorphic, we may complete this image to a subbundle by filling out zeros thus obtaining a harmonic map $\wt{\v} = (\v \ominus \beta) \oplus A'_{\v}(\be)$.
Following \cite[\S 2B]{burstall-wood}, we say that $\wt\v$ is obtained from $\v$ by \emph{forward replacement of $\be$}.  Dually, let $\ga$ be an antiholomorphic subbundle of $\v$ such that $\ga \subseteq \ker (A''_{\v^{\perp}} \circ A''_{\v})$.
Then $\wt\v =(\v \ominus \ga) \oplus A''_{\v}(\ga)$ is harmonic;
we say that
$\wt\v$ is obtained from $\v$ by \emph{backward replacement of $\ga$}.  Note that \emph{reversing the orientation of $M$}, i.e., replacing its complex structure by the conjugate complex structure, turns forward replacements into backward replacements and vice-versa.

A special case of forward replacement is when we choose $\beta = \v$ and $\gamma = \image A'_{\v}$, in which case
$\wt{\v} = \image A'_{\v}$.  We write
$G'(\v):= \image A'_{\v}$; this is called the
\emph{($\pa'$)-Gauss bundle} of $\v$ \cite[\S 2B]{burstall-wood} or
 $\pa$-transform \cite[Theorem 2.2]{chern-wolfson}.
Similarly, we define the
\emph{$\pa''$-Gauss bundle} of $\v$ by $G''(\v) := \image A''_{\v}$.

\section{Diagrams, cycles, and finite uniton number} \label{diagrams and cycles}

\subsection{Diagrams} \label{diagrams}
In \cite[\S 4]{APW1}  the present authors and A.~Aleman gave a  test for finiteness of the uniton number of a harmonic map into a Grassmannian based on the existence of certain cycles in a diagram associated to the harmonic map.
Here we extend that work to give more powerful tests.

To display second fundamental forms we shall use \emph{(harmonic) diagrams}, in the sense of \cite{burstall-wood}, i.e., directed graphs whose vertices (nodes) are mutually orthogonal subbundles $\psi_i$ with sum $\CC^n$.
For each ordered pair $(i,j)$ with $i \neq j$, the arrow (i.e., directed edge) from $\psi_i$ to $\psi_j$ represents the second fundamental form $A'_{\psi_i,\psi_j}$.   We shall call an arrow with $i \neq j$ \emph{proper}; the absence of that arrow indicates that $A'_{\psi_i,\psi_j}$ is known to vanish. 
In this paper, we will also associate a \emph{self-arrow} to each vertex $\psi_i$ --- that is, an arrow whose endpoints are the same vertex $\psi_i$ --- which represents the derivation
$D^{\psi_i}_z=\pi_{\psi_i}\circ \pa_z|_{\psi_i}$. However, self-arrows will not be shown  in the displayed diagrams below.

The most basic diagram for a smooth map $\v:M \to G_k(\C^n)$ is

\vspace{-3ex}
\begin{equation}
\begin{gathered}\label{diag:basic}
\xymatrixcolsep{3pc}
\xymatrix{
	\v \ar[r]_(0.5){A'_{\v}} & \v^{\perp}
\ar@/_0.7pc/[l]_{A'_{\v^{\perp}}}
}
\end{gathered} \vspace{-0.8ex} \end{equation}
As in \S\S \ref{subsec:background}, \ref{subsec:Grass}, the second fundamental forms $A'_{\v}$ and $A'_{\v^{\perp}}$ are holomorphic if and only if $\v$ is harmonic.

We say that a diagram $\Delta_2$ is a \emph{refinement} of a diagram $\Delta_1$ if all the vertices of diagram $\Delta_1$ are (orthogonal direct) sums of vertices of $\Delta_2$.   By a diagram \emph{associated to} a smooth map $\v:M \to G_k(\C^n)$ we mean one which is a refinement of the basic diagram \eqref{diag:basic}, i.e., each vertex represents a subbundle of $\v$ or $\v^{\perp}$.

For each path $C$ in a diagram from a vertex  $\be$ to a vertex $\ga$ and on each coordinate patch there is a \emph{corresponding operator  $\Cc:\Gamma(\be) \to \Gamma(\ga)$} which we denote by the same letter in boldface, given by the composition of the second fundamental forms and derivations represented by the edges of the path. If the path contains no self-arrows, then $\Cc$ is a \emph{bundle map} $\be \to \ga$.
By a \emph{cycle} on a vertex $\be$
we mean a path which starts and finishes at $\be$; note that  there may be self-arrows, and vertices may occur more than once.

In \cite[Proposition 1.5]{burstall-wood}, there is given a simple sufficient condition that a second fundamental form in a diagram be holomorphic --- note that Proposition 1.6 in that paper contains an error, see \cite{burstall-wood-correction}.
This leads to the idea of \emph{holomorphic cycles} (or \emph{circuits}) where each arrow in the cycle is holomorphic.  The key tool in that paper \cite[Proposition 1.8]{burstall-wood}, which appeared in \cite[\S2]{ramanathan}, is that a holomorphic cycle of length $\ell$ on a vertex $\al$ gives a globally defined section of $\otimes^{\ell} T^*M \otimes L(\al,\al)$, and if $M=S^2$, this must be \emph{nilpotent}; indeed any such section is given locally by $dz^\ell\otimes A$, and the coefficients of the characteristic polynomial of $A$ produce globally defined holomorphic differentials on $S^2$ which must vanish  \cite[Proposition 1.8]{burstall-wood}.
This leads to vanishing theorems giving methods to simplify, and so understand, harmonic maps from $S^2$.  For other Riemann surfaces, this argument does not work in general; instead we use the following new idea coming from the development in \cite{APW1} which \emph{does} work for all Riemann surfaces.

Call an arrow \emph{external} if one of its vertices is in $\v$ and the other is in $\v^\perp$, otherwise call it \emph{internal}.
A path is \emph{external} if it contains at least one external arrow.
Given any arrow $\psi_i \to \psi_j$, we have a corresponding component $\pi_{\psi_j} \circ T|_{\psi_i}$ of the operator $T$ defined by \eqref{T}.
If $\psi_i \to \psi_j$ is external, this component is given by
$\pi_{\psi_j} \circ T|_{\psi_i} = -\la^{-1}\pi_{\psi_j} \circ A^{\v}_z|_{\psi_i} = \la^{-1} A'_{\psi_i,\psi_j}
= \la^{-1}\pi_{\psi_j} \circ \pa_z|_{\psi_i}$,
whereas, if it is internal with $i \neq j$, it is $\pi_{\psi_j} \circ T|_{\psi_i}  = \pi_{\psi_j} \circ D^{\v}_z|_{\psi_i}
= A'_{\psi_i,\psi_j} = \pi_{\psi_j} \circ \pa_z|_{\psi_i}$.  Finally, if $i=j$, the component
$\pi_{\psi_i} \circ T|_{\psi_i}$ is
$\pi_{\psi_i} \circ D^{\v}_z|_{\psi_i} = D^{\psi_i}_z =
 \pi_{\psi_i} \circ \pa_z|_{\psi_i}$.

Given a diagram associated to a smooth map $\v:M \to G_k(\C^n)$, we say that a path $C$ has  \emph{degree} $m$ if it has $m$ external arrows and \emph{type} $(\ell,m)$ if it has length $\ell$ and degree $m$.   Then the composition of the components of $T$ corresponding to each arrow along the path gives a term $\la^{-m}\Cc$ in $T^{\ell}$;
note that this could be zero.  Let $\al$ be a subbundle of the initial vertex $\be$ of $C$.  We say that $C$ is \emph{zero} (resp. \emph{zero on $\al$})
and we write $C=0$ (resp. $C|_{\al} = 0$),
if the corresponding operator $\Cc$ is zero on (sections of) $\be$ (resp. $\al$);
similarly, we say that $C$ is \emph{nilpotent} if $\Cc$ is.
 Recall that an arrow which is not a self-arrow is called \emph{proper}: such arrows represent second fundamental forms.

\begin{lemma}\label{lem:bmap}
   Consider a harmonic diagram $\Delta$ and let $C$ be a path in $\Delta$ of length $\ell$ from a vertex $\be$ to a vertex $\ga$;
  if $\be=\ga$, suppose further that $C$ is not of the form $S^\ell$, where  $S$ is the self-arrow on $\be=\ga$.
 Let $\al$ be a subbundle of $\be$.  If any  path of length $\ell$  from $\be$ to $\ga$  containing exactly the same  proper arrows as $C$ is either zero  on $\al$ or equal to $C$, then  $\Cc$  restricts to a bundle map from $\al$ to $\ga$.
\end{lemma}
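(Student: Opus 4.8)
The plan is to regard $\Cc$ as a differential operator (in a local coordinate $z$) and to show that its ``lower‑order part'' vanishes on $\al$, extracting this from the hypothesis by a linear‑algebra argument over the family of all length‑$\ell$ paths sharing the proper arrows of $C$.

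If $C$ has no self‑arrows then $\Cc$ is a composition of second fundamental forms, which are tensorial, hence $\Cc$ is already a bundle map; so I may assume $C$ has $q\ge 1$ self‑arrows. I list its proper arrows in order as $a_1,\dots ,a_p$, giving vertices $\be=u_0\to\cdots\to u_p=\ga$, and record the self‑arrows of $C$ by the tuple $\mathbf n=(n_0,\dots ,n_p)$, $\sum_i n_i=q$, where $n_i$ is the number of self‑arrows lying between $a_i$ and $a_{i+1}$ (at $u_i$). Since self‑arrows do not change the vertex, $p\ge 1$ unless $\be=\ga$ and $C=S^\ell$, which is excluded. Next I carry out a Leibniz expansion: writing each edge‑operator of the diagram in the form $\pi_{(\cdot)}\circ\pa_z$ and expanding $\Cc(fs)$ for a local function $f$ and $s\in\Gamma(\al)$, whenever a factor $\pa_z$ differentiates the scalar $f$ instead of the running section, what remains acting on that section is a bare projection onto the target of the corresponding edge, applied to a section of its source; this is the identity if the edge is a self‑arrow and zero if it is proper. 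Hence only self‑arrows can be ``skipped'', and one obtains $\Cc(fs)=\sum_{R}(\pa_z^{|R|}f)\,\mathbf{C}_{R}(s)$, the sum over subsets $R$ of the set of self‑arrow positions of $C$, where $C_R$ is $C$ with those self‑arrows deleted. Consequently $\Cc(fs)-f\Cc(s)=\sum_{j\ge 1}(\pa_z^{j}f)\,\Psi_j(s)$ with $\Psi_j=\sum_{|R|=j}\mathbf{C}_{R}$, and $\Cc|_\al$ is a bundle map if and only if $\Psi_j|_\al=0$ for every $j\ge 1$, i.e.\ if and only if the operator‑valued polynomial $\sum_{j\ge 0}\xi^{j}\,\Psi_j|_\al$ in an auxiliary variable $\xi$ is constant.

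Now I bring in the hypothesis. Let $P$ be the set of length‑$\ell$ paths from $\be$ to $\ga$ with proper arrows $a_1,\dots ,a_p$ in order; these are exactly the paths obtained from $C$ by redistributing its $q$ self‑arrows among the $p+1$ slots, so $P$ is indexed by tuples $\mathbf m$ with $|\mathbf m|=q$, with $C=C_{\mathbf n}$. Applying the expansion above to each $\mathbf{C}_{\mathbf m}$ and grouping subsets $R$ by the tuple of the resulting path gives $\sum_{j\ge 0}\xi^{j}\Psi_j(\mathbf{C}_{\mathbf m})=\sum_{\mathbf k\le\mathbf m}\binom{\mathbf m}{\mathbf k}\xi^{\,|\mathbf m|-|\mathbf k|}\,\mathbf{C}_{\mathbf k}$, where $\binom{\mathbf m}{\mathbf k}=\prod_i\binom{m_i}{k_i}$ and $C_{\mathbf k}$ is the path with proper arrows $a_1,\dots ,a_p$ and self‑arrow tuple $\mathbf k$. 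By hypothesis $C_{\mathbf m}|_\al=0$ whenever $\mathbf m\ne\mathbf n$, and then the whole $\xi$‑polynomial of $C_{\mathbf m}$ restricts to $0$ on $\al$ (as $fs\in\Gamma(\al)$ for $s\in\Gamma(\al)$). So it suffices to produce constants $c_{\mathbf m}$ (one for each $\mathbf m$ with $|\mathbf m|=q$) with $c_{\mathbf n}\ne 0$ such that $\sum_{|\mathbf m|=q}c_{\mathbf m}\big(\sum_{\mathbf k\le\mathbf m}\binom{\mathbf m}{\mathbf k}\xi^{\,|\mathbf m|-|\mathbf k|}\mathbf{C}_{\mathbf k}\big)$ is independent of $\xi$: restricting this $\xi$‑independent operator to $\Gamma(\al)$ then leaves only $c_{\mathbf n}$ times the $\xi$‑polynomial of $\Cc$, forcing the latter to be constant, which is the conclusion. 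The coefficient of $\xi^{\,q-j}$ ($j<q$) in that sum is $\sum_{|\mathbf k|=j}\big(\sum_{\mathbf m\ge\mathbf k}c_{\mathbf m}\binom{\mathbf m}{\mathbf k}\big)\mathbf{C}_{\mathbf k}$, so it is enough to arrange $\sum_{\mathbf m\ge\mathbf k}c_{\mathbf m}\binom{\mathbf m}{\mathbf k}=0$ for all $\mathbf k$ with $|\mathbf k|<q$.

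This last requirement is the combinatorial heart of the argument, and the step I expect to be the main obstacle. Encoding the unknowns in the homogeneous degree‑$q$ polynomial $F(x_0,\dots ,x_p)=\sum_{|\mathbf m|=q}c_{\mathbf m}\,x^{\mathbf m}$ (with $x^{\mathbf m}=\prod_i x_i^{m_i}$) and using $\pa_x^{\mathbf k}F(1,\dots ,1)=\big(\prod_i k_i!\big)\sum_{\mathbf m\ge\mathbf k}c_{\mathbf m}\binom{\mathbf m}{\mathbf k}$, the conditions say precisely that $F$ vanishes to order $q$ at the point $(1,\dots ,1)$. A homogeneous degree‑$q$ polynomial vanishing to order $q$ at a point becomes, after a linear change of coordinates carrying that point to $(1,0,\dots ,0)$, a polynomial in the remaining coordinates only; hence the admissible $F$ are exactly the homogeneous degree‑$q$ polynomials in $x_1-x_0,\dots ,x_p-x_0$. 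Since $p\ge 1$, given the target tuple $\mathbf n$ I may simply take $F=(x_1-x_0)^{\,n_0+n_1}\prod_{i=2}^{p}(x_i-x_0)^{\,n_i}$ (or $F=(x_1-x_0)^{q}$ when $p=1$); expanding this product shows that $x^{\mathbf n}$ occurs in it with nonzero coefficient, i.e.\ $c_{\mathbf n}\ne 0$, which is what was needed. Apart from this, the only delicate point is the bookkeeping in the Leibniz expansion — the claim that a diverted derivative can only delete a self‑arrow, never a proper one.
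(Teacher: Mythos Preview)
Your proof is correct and takes a genuinely different route from the paper's.  Both arguments begin with the same Leibniz expansion
\[
\Cc(fs)=\sum_{j\ge 0} f^{(j)}\,\Psi_j(s),\qquad \Psi_j=\sum_{|R|=j}\mathbf{C}_R,
\]
where $C_R$ is $C$ with a set $R$ of self-arrows removed (this is exactly the paper's formula \eqref{bmap}).  From there the paper argues by contradiction: if some $\Psi_{j}(s)\ne 0$, pick the largest $j_0$ for which an individual summand $\BB_{m_0 j_0}(s)\ne 0$, and then \emph{re-insert} $j_0$ self-arrows at an endpoint of that shorter path to produce a new length-$\ell$ path $\widetilde{C}\ne C$ with the same proper arrows, whose operator is non-zero on $\al$.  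You instead argue directly, by taking a carefully chosen linear combination $\sum_{|\mathbf m|=q} c_{\mathbf m}\,\mathbf{C}_{\mathbf m}$ over \emph{all} length-$\ell$ paths sharing the proper arrows of $C$; the vanishing of all but $C=C_{\mathbf n}$ on $\al$ then forces the $\xi$-polynomial of $C$ itself to be constant on $\al$.  Your identification of the admissible coefficient systems with homogeneous degree-$q$ polynomials in $x_1-x_0,\ldots,x_p-x_0$, and the explicit choice $F=(x_1-x_0)^{\,n_0+n_1}\prod_{i\ge 2}(x_i-x_0)^{n_i}$ (with $c_{\mathbf n}=(-1)^{n_0}\binom{n_0+n_1}{n_0}\ne 0$), is clean and works.

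What each approach buys: the paper's argument is shorter, avoids any combinatorics, and is essentially constructive --- it hands you a concrete competing path $\widetilde{C}$.  Your approach is more algebraic and reveals additional structure: the family $\{C_{\mathbf m}\}$ carries a natural ``umbral'' action whose invariants are precisely the combinations with constant $\xi$-polynomial, and you show this invariant space is large enough to isolate any prescribed $C_{\mathbf n}$.  Either way, the key analytic input is the same --- proper arrows cannot be ``skipped'' in the Leibniz expansion because the source and target vertices are orthogonal --- and you identify this correctly.
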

\begin{proof}
  We write
$\Cc = \mathbf{A}_\ell\circ \mathbf{A}_{\ell-1}\circ\ldots\circ \mathbf{A}_1,$
where $\mathbf{A}_i$ is the operator  corresponding to the $i$th arrow in $C$.
If  $\Cc$ is zero on $\al$, then
 $\Cc|_{\al}$ is obviously a bundle map.  So now assume that  $\Cc|_{\al}$ is non-zero.
If $C$ does not contain any self-arrow,  then each $\mathbf{A}_i$ is a bundle map and, consequently, $\Cc|_{\al}$  is a bundle map.

Suppose that $C$ contains exactly $k\geq 0$ self-arrows.
Given a smooth complex function $f$ on $M$ and a section $s\in  \Gamma( \al)$, we claim that
\begin{equation}\label{bmap}
 \Cc(fs)=f\Cc_0(s)+f'\Cc_1(s)+\ldots +f^{(k)}\Cc_k(s)
 \end{equation}
where $\Cc_0=\Cc$ and, for each $j=1,\ldots,k$, $\Cc_j$ is the sum of all  operators $\BB_{mj}$, with $m=1,\ldots,\binom{k}{j}$,  corresponding to paths from $\be$ to $\ga$ obtained from $C$ by removing exactly $j$ self-arrows. We prove this claim by induction  on the length $\ell$ of $\Cc$. For $\ell=1$, we have $k=0$ or $k=1$, and  formula \eqref{bmap} obviously holds in both cases.
 For the induction step, suppose that the formula holds for any path of length $\ell-1$.  Let $\widehat{C}$ be the path in $\Delta$ of length $\ell-1$ obtained from  $C$ by removing the last arrow,  thus $\widehat\Cc=  \mathbf{A}_{\ell-1}\circ\ldots\circ \mathbf{A}_1.$ Assume first that the last arrow is not a self-arrow, so that $\mathbf{A}_\ell$ is a bundle map. By the induction hypothesis, $\widehat\Cc(fs)=f\widehat\Cc_0(s)+f'\widehat\Cc_1(s)+\ldots +f^{(k)}\widehat\Cc_k(s)$; since $\mathbf{A}_\ell$ is a bundle map, we have  $\Cc_i=\mathbf{A}_\ell\circ \widehat\Cc_i$; hence we see that \eqref{bmap} holds for $\Cc= \mathbf{A}_\ell\circ  \widehat\Cc$. Suppose now that $\mathbf{A}_\ell$ corresponds to a self-arrow. In this case,  $\widehat{C}$ has $k-1$ self-arrows and $C$ has $k$ self-arrows. By the induction hypothesis,
$\widehat\Cc(fs)=f\widehat\Cc_0(s)+f'\widehat\Cc_1(s)+\ldots +f^{(k-1)}\widehat\Cc_{k-1}(s)$. We clearly have $\Cc_0=\mathbf{A}_\ell\circ\widehat\Cc_0$, $\Cc_k=\widehat\Cc_{k-1}$,  and $\Cc_i=\widehat\Cc_{i-1}+\mathbf{A}_\ell\circ\widehat\Cc_{i}$ for $i=1,\ldots,k-1$; then, since  $\mathbf{A}_\ell$ is a derivation, we obtain:
\begin{align*}
  \Cc(fs)&=\mathbf{A}_\ell\circ \widehat\Cc(fs)\\
  &=\mathbf{A}_\ell\big(f\widehat\Cc_0(s)+f'\widehat\Cc_1(s)+\ldots +f^{(k-1)}\widehat\Cc_{k-1}(s)  \big)\\
  &= f\mathbf{A}_\ell\circ\widehat\Cc_0(s)+f'(\widehat\Cc_0(s)+ \mathbf{A}_\ell\circ\widehat\Cc_1(s))+\ldots +f^{(k)}\widehat\Cc_{k-1}(s)\\
  &= f\Cc_0(s)+f'\Cc_1(s)+\ldots +f^{(k)}\Cc_k(s),
\end{align*}
as claimed.

Now we prove the lemma by contradiction. Suppose that $\Cc|_{\al}$ is not a bundle map.  Then, by \eqref{bmap},
we can choose a section  $s\in  \Gamma(\al)$ and $j \in \{1,\ldots,k\}$ such that $\Cc_j(s)\neq 0$,  and  consequently $\BB_{mj}(s)\neq 0$ for some $m \in \{1,\ldots,\binom{k}{j}\}$.
Let $j_0 $ be the  largest $j$ such that $\BB_{mj}(s)\neq 0$ for some $m$, and choose $m_0$ such that $\BB_{m_0j_0}(s)\neq 0$. Recall that $\BB_{m_0j_0}$ corresponds to a path
 $B_{m_0j_0}$ from $\be$ to $\ga$ obtained from $C$ by removing some set of $j_0$ self-arrows, hence it has length $\ell-j_0$.
The idea  is to construct a path $\widetilde{C}\neq C$ of length $\ell$ from $\beta$ to $\ga$, containing exactly the same  proper arrows as $C$, with  $\tilde\Cc|_{\al}$ non-zero by adding $j_0$ self-arrows (on $\beta$ or $\ga$) to $B_{m_0j_0}$.

Set $B=B_{m_0,j_0}$ and   consider the path $\widetilde{C}=S^{j_0}\circ B$ from $\beta$ to $\ga$,  which  has length $\ell$,  where $S$ is the self-arrow on $\ga$.
Since $B$ has $k-j_0$ self-arrows, we can write $\BB(fs)=f\BB_0(s)+f'\BB_1(s)+\ldots +f^{(k-j_0)}\BB_{k-j_0}(s)$, for the section $s \in \Gamma(\al)$ chosen above, and any smooth complex function $f$, as in \eqref{bmap}.
If $\BB_i(s)\neq 0$ for some $i>0$, then there exists a path from $\beta$ to $\ga$ obtained from $B$ by removing a set of $i$ self-arrows giving an operator  which is non-zero on $\al$; but then such  a  path is obtained from $C$ by removing a set of $j_0+i$ self-arrows, i.e., $ \BB_{m,j_0+i}(s)\neq 0$ for some $m$, which contradicts our definition of $j_0$.
Hence  $\BB(fs)=f\mathbf B(s)$ and $\mathbf B(s)\neq 0$.
Since $S$ is a self-arrow, $\mathbf S$ is a derivation, and this implies that
$$\widetilde{\mathbf C}(fs)=\mathbf S^{j_0}\circ \mathbf B(fs)=\mathbf S^{j_0}\big(f\mathbf B(s)\big)=\sum_{i=0}^{j_0}\binom{j_0}{i}f^{(j_0-i)}\mathbf{S}^i(\BB(s)).$$
Since this holds for any smooth function $f$ and $\BB(s)\neq 0$, the operator $\widetilde{\mathbf C}$ is non-zero on $\al$.
Hence,  if $\widetilde{C}\neq C$,  this  contradicts the assumption that $C$ is the unique path
from $\be$ to $\ga$ of length $\ell$ which is non-zero on $\al$.  If $\widetilde{C}= C$,  we consider instead the path $\widetilde{C}= B\circ S^{j_0}$,  which is also of type $\ell$ and,  by a similar argument,
 is non-zero  on $\al$. Since $C$ has at least one arrow which is not a self-arrow, if $C=S^{j_0}\circ B$ then $C\neq B\circ S^{j_0}$, and we are done.
\end{proof}

We remark that, although it restricts to a bundle map, $\Cc$ may still contain self-mappings, cf.\ the second approach in Proposition \ref{Frenet-mixed}.
We now give an important consequence of the finiteness criterion (Proposition \ref{S-stabilizes}),  which we state in a general setting.

\begin{proposition} \label{nilpotency test}
Let $\v:M \to G_k(\C^n)$ be a harmonic map of finite uniton number, and consider an associated diagram  $\Delta$. Let $\psi$ be a vertex of $\Delta$ with $\psi \subseteq \v$ and  let $\al$ be a subbundle of $\psi$ (which may not be a vertex of $\Delta$).
Let $C$ be an external cycle of type $(\ell,m)$ on  $\psi$ whose corresponding operator $\Cc$ sends (sections of) $\al$ to $\al$.
Then  $\Cc$ restricts to a nilpotent bundle map
$\Cc|_{\al}:\al \to \al$ if, for each $j \in \N$,
any cycle on  $\psi$ of type $(j\ell,jm)$  is zero on $\al$ or is equal to $C^j$.
\end{proposition}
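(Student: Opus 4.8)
The plan is to combine Lemma \ref{lem:bmap} (to know $\Cc|_{\al}$ is a bundle map) with the finiteness criterion of Proposition \ref{S-stabilizes}, applied to the iterates $C^j$ of the cycle. First I would observe that, under the hypothesis, Lemma \ref{lem:bmap} applies to $C$ itself: any path of length $\ell$ from $\psi$ to $\psi$ containing the same proper arrows as $C$ is, in particular, a cycle of type $(\ell,m)$, hence by hypothesis (the case $j=1$) it is zero on $\al$ or equal to $C$; and $C$ is external, so in particular not of the form $S^\ell$. Thus $\Cc$ restricts to a bundle map $\al\to\al$, which by hypothesis it does, so the statement ``$\Cc|_{\al}$ is a nilpotent bundle map'' makes sense. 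The same reasoning shows each $C^j$ restricts to a bundle map on $\al$, namely $(\Cc|_{\al})^j$.

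Next I would track the power of $\la^{-1}$. Recall from the discussion before Lemma \ref{lem:bmap} that a path of type $(\ell,m)$ contributes a term $\la^{-m}\Cc$ to $T^{\ell}$, since each external arrow carries one factor $\la^{-1}$ and each internal or self-arrow carries none. Iterating, a cycle of type $(j\ell, jm)$ contributes $\la^{-jm}$ times its operator to $T^{j\ell}$. Now I would compute the component $\pi_{\al}\circ T^{j\ell}|_{\al}$ — or rather the relevant matrix block — by summing over all paths from $\psi$ to $\psi$ of length $j\ell$; grouping by degree, the contribution in degree $jm$ comes precisely from cycles of type $(j\ell, jm)$, and by the hypothesis every such cycle is either zero on $\al$ or equals $C^j$. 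Hence the coefficient of $\la^{-jm}$ in $\pi_{\al}\circ T^{j\ell}|_{\al}$ equals $(\Cc|_{\al})^j$ (the $C^j$ term), possibly plus zero terms — so it is exactly $(\Cc|_{\al})^j$.

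The key point is then: if $\Cc|_{\al}$ were not nilpotent, then $(\Cc|_{\al})^j\neq 0$ for all $j$, so the term of degree $jm$ in $T^{j\ell}$ would be nonzero, meaning the maximal power of $\la^{-1}$ appearing in $T^{j\ell}$ is at least $jm$. Since $m\geq 1$ (the cycle is external), $jm\to\infty$, contradicting the finiteness criterion (Proposition \ref{S-stabilizes}), which says the maximal power of $\la^{-1}$ in $T^i$ stays bounded. Therefore $\Cc|_{\al}$ is nilpotent.

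I expect the main obstacle to be the bookkeeping in the middle step: to argue rigorously that the degree-$jm$ part of $\pi_{\al}\circ T^{j\ell}|_{\al}$ is genuinely $(\Cc|_{\al})^j$ and is not cancelled by other terms. The subtlety is that $T^{j\ell}$ is a differential operator, and a priori its blocks involve derivatives; one must check that when restricted to $\al$ (using that each $C^j$ is a bundle map, as established via Lemma \ref{lem:bmap}) the degree-$jm$ coefficient has no derivative terms, so that its non-vanishing is equivalent to $(\Cc|_{\al})^j\neq 0$ as a bundle endomorphism. The case $\Cc|_{\al}=0$ on all of $\al$ (where there is nothing to prove) and the handling of the alternative ``$C^j=$ some path with self-arrows inserted'' must also be dispatched, but both are routine given Lemma \ref{lem:bmap}.
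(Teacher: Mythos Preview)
Your proposal is correct and follows essentially the same approach as the paper's proof: invoke Lemma~\ref{lem:bmap} (using the $j=1$ case of the hypothesis together with externality of $C$) to see $\Cc|_{\al}$ is a bundle map, then argue by contradiction that non-nilpotency would force the $\la^{-jm}$ coefficient of $T^{j\ell}$ to be nonzero for all $j$, violating Proposition~\ref{S-stabilizes}. Your worries about derivative terms and cancellations are not genuine obstacles: the hypothesis guarantees that every cycle of type $(j\ell,jm)$ is either the single path $C^j$ or has operator vanishing on $\al$, so the $\la^{-jm}$ block of $\pi_{\al}\circ T^{j\ell}|_{\al}$ is exactly $(\Cc|_{\al})^j$ with no extra contributions to cancel it.
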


\begin{proof}
Since $C$ is external, $C$ contains at least one arrow which is not a self-arrow. Then,
by Lemma \ref{lem:bmap}, the operator $\Cc|_\alpha$ is a bundle map.

Suppose that  $\Cc|_\alpha$ is not nilpotent.  Then,  for each $j$, $\Cc^j(\al) \neq 0$.
Since $C^j$ is the unique cycle on  $\psi$ of type $(j\ell,jm)$  which is non-zero on $\al$, it follows that $\pi_{\la^{-jm}\al}T^{j\ell}(\al) \neq 0$, thus, for each $j$, $T^{j\ell}$ has a non-vanishing term in $\la^{-jm}$, contradicting Proposition \ref{S-stabilizes}.
\end{proof}

\begin{remark} \label{rem:nilpotency test}
\begin{itemize}
\item[(i)]  If $\al$ is a bundle of rank one, a bundle map $\al \to \al$  is nilpotent if and only if it is zero.

 \item[(ii)] In our main applications of Proposition \ref{nilpotency test}, namely Propositions \ref{ce}, \ref{ce^s}, \ref{cece^2}, \ref{nilorder p} and \ref{c^2ece}, we consider a diagram $\Delta$ which has $\v$ as a vertex, set $\psi = \v$, and take $\al$ to be a proper subbundle of $\v$ which is not a vertex of $\Delta$.
\item[(iii)] For other applications we take a diagram where $\al$
\emph{is} a vertex and we set $\psi = \al$. See Proposition \ref{Frenet-mixed} for both approaches.
\end{itemize}
\end{remark}

To implement Proposition \ref{nilpotency test}
with $\psi=\v$, the following is often useful.

\begin{lemma} \label{uniqueness test} Let $\v:M \to G_k(\C^n)$ be a smooth map and consider an associated diagram which has $\v$ as a vertex. Let $\al$ be a  subbundle of $\v$.
Let $C$ be a cycle on $\v$ of type $(\ell,m)$  whose corresponding operator $\Cc$ sends (sections of) $\al$ to $\al$.
Suppose that {\rm (i)} any non-zero  cycle on $\v$ of degree $m$ has length at least $\ell;$
{\rm (ii)}  any cycle on $\v$ of type
$(\ell,m)$ is zero on $\al$ or is equal to $C$.   Then, for each $j \in \N$,
 any cycle on $\v$ of type $(j\ell,jm)$ is zero on $\al$ or is equal to $C^j$.
\end{lemma}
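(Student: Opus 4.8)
The plan is to prove the statement by decomposing an arbitrary cycle $D$ on $\v$ of type $(j\ell,jm)$ as a concatenation $D=P_1\cdots P_j$ of $j$ cycles on $\v$, each of degree exactly $m$, and then using hypothesis (i) to force each $P_t$ to have length exactly $\ell$ and hypothesis (ii) to force each $P_t$ to equal $C$, whence $D=C^j$. The key preliminary is a structural remark: since the associated diagram is a refinement of the basic diagram \eqref{diag:basic} and has the full subbundle $\v$ itself as a vertex, every other vertex, being orthogonal to $\v$, is a subbundle of $\v^{\perp}$; hence every proper arrow incident to $\v$ is external and the only internal arrow incident to $\v$ is its self-arrow $S$. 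Consequently any cycle on $\v$ can be written canonically as an alternating concatenation $S^{c_0}E_1S^{c_1}\cdots E_qS^{c_q}$, where each $S^{c_i}$ is a (possibly empty) run of self-arrows on $\v$ and each \emph{excursion} $E_i$ leaves $\v$ by an external arrow, runs among the $\v^{\perp}$-vertices, and returns to $\v$ by an external arrow, so that $E_i$ has degree exactly $2$. In particular the degree of a cycle on $\v$ is always even, so $m$ is even; and if $m=0$ then $C$, and likewise every cycle of type $(j\ell,0)$ on $\v$, consists only of self-arrows, so $C=S^{\ell}$ and the cycle is $S^{j\ell}=C^j$. I may therefore assume $m\ge 2$.

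For $D$ of type $(j\ell,jm)$, its canonical decomposition has $q=jm/2$ excursions; I would group these into $j$ consecutive batches of $m/2$ excursions each, assigning the intervening runs of self-arrows to the batch immediately to their left and the leading run to the first batch. This produces $D=P_1\cdots P_j$ with each $P_t$ a cycle on $\v$ of degree $m$ and with $\mathrm{length}(P_1)+\cdots+\mathrm{length}(P_j)=\mathrm{length}(D)=j\ell$. Now suppose $D$ is not zero on $\al$, i.e. the operator $\mathbf D=\mathbf P_j\circ\cdots\circ\mathbf P_1$ (composed as in \S\ref{diagrams}, the left factor acting last) is not identically zero on $\Gamma(\al)$. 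Then no $\mathbf P_t$ vanishes identically on $\Gamma(\v)$, so by hypothesis (i) each $P_t$ has length $\ge\ell$; since the lengths sum to $j\ell$, each $P_t$ has type exactly $(\ell,m)$. Finally I would show by induction on $t$ that $P_1=\cdots=P_t=C$: granting $P_1=\cdots=P_{t-1}=C$, the composite $\mathbf P_{t-1}\circ\cdots\circ\mathbf P_1=\Cc^{t-1}$ maps $\Gamma(\al)$ into $\Gamma(\al)$ (because $\Cc$ does, by hypothesis) and is not identically zero on $\Gamma(\al)$ (since $\mathbf D|_{\al}$ factors through it and is not identically zero); hence if $P_t$ were zero on $\al$ we would get $\mathbf P_t\circ\cdots\circ\mathbf P_1|_{\al}\equiv 0$ and so $\mathbf D|_{\al}\equiv 0$, a contradiction. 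Thus $P_t$ is not zero on $\al$, and hypothesis (ii) then gives $P_t=C$; so $D=C^j$. The cases $j=0,1$ are immediate, the latter being exactly hypothesis (ii).

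The one genuinely delicate point is the bookkeeping in the structural remark: one must verify carefully that every cycle on $\v$ really does split into runs of self-arrows and degree-$2$ excursions --- this is precisely where it matters that $\v$ itself, rather than a mere proper subbundle of $\v$, is a vertex of the diagram --- so that the batching into degree-$m$ sub-cycles is legitimate and leaves no orphan piece of positive degree; after that the argument is a short chase with (i) and (ii), where the only thing to watch is consistent use of the composition convention for operators along concatenated paths fixed in \S\ref{diagrams}.
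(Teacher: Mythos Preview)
Your proof is correct and follows essentially the same approach as the paper's own proof: decompose a cycle of type $(j\ell,jm)$ on $\v$ into $j$ sub-cycles of degree $m$, use hypothesis (i) to force each to have length exactly $\ell$, then use hypothesis (ii) and induction to force each to equal $C$. Your version is more explicit about the structural point (that since $\v$ itself is a vertex, every other vertex lies in $\v^\perp$, so cycles on $\v$ split into self-arrow runs and degree-$2$ excursions) and about the induction step, and you handle the degenerate case $m=0$ separately; the paper's proof is terser on all of these but the content is the same.
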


\begin{proof}
Since $C$ starts and finishes  on $\v$, $m$ must be even.
Let $B$ be a non-zero cycle on $\v$ of type $(j\ell,jm)$; it is the composition of $j$ non-zero cycles $C_i$  on $\v$ of degree $m$.
By hypothesis (i), each $C_i$ has length \emph{at least} $\ell$; since the total length of $B$ is $j\ell$, each $C_i$ has length \emph{exactly} $\ell$.
Finally, by induction on $i$ and hypothesis (ii), each $C_i$ is zero on $\al$ or equal to $C$, so that  $B$ is zero on $\al$ or equal to $C^j$.
\end{proof}

Note that the lemma still holds when $C=0$,  in which case the conclusion is that there are no non-zero cycles  on $\v$ of type $(j\ell,jm)$ for any $j$.

As a first example,  let $\v:M \to G_k(\C^n)$ be a harmonic map and let $C$ be the cycle $\v \to \v^{\perp} \to \v$ on $\v$ of type $(2,2)$ in the diagram \eqref{diag:basic};  the corresponding  operator is the bundle map $\Cc :\v \to \v$ given by $\Cc = A'_{\v^{\perp}} \circ A'_{\v} = (A^{\v}_z)^2|_{\v}$. The conditions (i) and (ii) of Lemma \ref{uniqueness test} are satisfied with $\al = \v$. Hence, by Proposition \ref{nilpotency test}
(with $\psi=\v$), \emph{if $\v$ is of finite uniton number, $\Cc$ is nilpotent}; it follows that $A^{\v}_z$ is nilpotent, as was already noted above.

A harmonic map $\v:M \to G_k(\C^n)$ is called \emph{$\pa'$-irreducible} (resp.\
\emph{$\pa''$-irreducible}) if $\rk G'(\v) = \rk \v$
(resp. $\rk G''(\v) = \rk \v$),
equivalently, $A'_{\v}$ (resp.\ $A''_{\v}$) has maximal rank off an isolated set of points; the structure of \emph{reducible}  harmonic maps is given by the Reduction Theorem \cite[Theorem 4.1]{burstall-wood}.
Nilpotency of $A^{\v}_z$ implies that either $\v$ or $\v^{\perp}$ is reducible.  In particular, \emph{the description of J.~Ramanathan \cite{ramanathan} of harmonic maps from $S^2$ to $G_2(\C^4)$ as stated in \cite[\S 5D]{burstall-wood} immediately generalizes to
harmonic maps of finite uniton number from any Riemann surface}.

\subsection{More nilpotent cycles} \label{subsec:more}

For a \emph{harmonic} map $\v$, we recalled above that the \emph{($\pa'$-)Gauss bundle} $G'(\v)$ is the image of $A'_{\v}$ (completed to a subbundle by filling out zeros); this bundle is harmonic.  We iterate this construction to give the \emph{$i$th ($\pa'$-)Gauss bundle} $G^{(i)}(\v)$ for $i=1,2,\ldots$.
by setting $G^{(0)}(\v) = \v$, $G^{(1)}(\v) = G'(\v)$, $G^{(i+1)}(\v) = G'(G^{(i)}(\v))$;
see \cite{chern-wolfson} for a moving frames approach.

Similarly, we define the \emph{$\pa''$-Gauss bundle} of $\v$ by
$G''(\v) = \image(A^{\v}_{\zbar}|_{\v})$, and iterate this to obtain $G^{(-1)}(\v) = G''(\v)$, $G^{(-i-1)}(\v) = G''(G^{(-i)}(\v))$ \ $(i =0,1,\ldots)$, so that $G^{(i)}(\v)$ is defined for all $i \in \Z$.   Note that $G''(G'(\v)) \subseteq \v$ (resp. $G'(G''(\v)) \subseteq \v$) with equality if and only if $\v$ is $\pa'$-irreducible (resp. $\pa''$-irreducible) \cite[Proposition 2.3]{burstall-wood}.  The sequence of Gauss bundles $G^{(i)}(\v)$ \ $(i \in \Z)$ is called the
\emph{harmonic sequence of $\v$} \cite{chern-wolfson,wolfson}.

The \emph{(complex) isotropy order} of a harmonic map $\v:M \to G_{m}(\C^n)$ into a (complex) Grassmannian is defined to be the greatest value of $r \in \{1,2,\ldots,\infty\}$ such that $\v$ is orthogonal to $G^{(i)}(\v)$ for all $i$ with $1 \leq i \leq r$. Equivalently \cite[Lemma 3.1]{burstall-wood}, the isotropy order is the greatest value of $r$ such that $G^{(i)}(\v)$ and $G^{(j)}(\v)$ are orthogonal for all $i,j \in \Z$ with $i \neq j$ and $1 \leq |i-j| \leq r$.   A harmonic map with infinite isotropy order is called  \emph{(strongly) isotropic}.

For a harmonic map $\v:M \to G_k(\C^n)$ of finite isotropy order $r$,  we define the  \emph{first return (path)} to be the path
$c = c_r = c(\v)  = c_r(\v): \v \to G'(\v) \to \cdots \to G^{(r-1)}(\v) \to \wt{R} \to \v$, i.e. the cycle on $\v$ shown in the following diagram:

\vspace{0ex}
\begin{equation}
\begin{gathered}\label{diag:first return0}
\xymatrixcolsep{3pc}
\xymatrix{
	\v \ar[r]_(0.4){A'_{\v}} & \ G'(\v)
	\ar[r]_{A'_{G^{(1)}(\v)}} & \space\space\space\cdots &
\ar[r]_(0.4){A'_{G^{(r-2)}(\v)}} & G^{(r-1)}(\v)\ar[r]_(0.7){ A'_{G^{(r-1)}(\v)}} & \wt{R}
\ar@/_1.2pc/[lllll]
}
\end{gathered} \end{equation}
where $\wt{R}:= \bigl(\sum_{i=0}^{r-1} G^{(i)}(\v)\bigr)^{\perp}$ and the curved arrow represents $A'_{\wt{R}, \v}$.

 The \emph{first ($\pa'$)-return map}  $\cc = \cc_r = \cc(\v)  = \cc_r(\v)$ is the corresponding bundle map $\v \to \v$
(see \cite{bahy-wood-G2} where it is denoted by $c'_r(\v)$).  Thus, the first return map is the composition of second fundamental forms:
\begin{eqnarray} \label{first returnc}
	\cc &=& A'_{G^{(r)}(\v), \v} \circ
	A'_{G^{(r-1)}(\v)} \circ \cdots \circ A'_{G^{(1)}(\v)} \circ A'_{\v}\\
	&=& \pi_{\v} \circ A'_{G^{(r)}(\v)} \circ
	A'_{G^{(r-1)}(\v)} \circ \cdots \circ A'_{G^{(1)}(\v)} \circ A'_{\v}.\nonumber
\end{eqnarray}

Note that the maps $A'_{\v}$, $A'_{G^{(1)}(\v)}$, $\ldots$, $A'_{G^{(r-2)}(\v)}$ in \eqref{diag:first return0} are surjective and
$A'_{G^{(r-1)}(\v)}$ has image $G^{(r)}(\v)$, which lies in $\wt{R}$.

Using the test for holomorphicity in \cite[Proposition 1.5]{burstall-wood}, the first return map $\cc:\v \to \v$ is a holomorphic cycle and so is nilpotent if $M = S^2$.  For an \emph{arbitrary} Riemann surface $M$, it is shown in \cite[Theorem 4.10]{APW1}, that \emph{$\cc$ is still nilpotent if $\v$ has finite uniton number}. Note that this also follows from Proposition \ref{nilpotency test}, or Lemma \ref{uniqueness test}, applied to diagram  \eqref{diag:first return0} with $\al = \v$ and $(\ell,m) = (r+1,2)$, since $c$ clearly satisfies the required conditions; furthermore, nilpotency of $\cc$ generalizes that of $A^{\v}_z$ mentioned above.

 This immediately tells us that some theorems which depend only on this fact extend.  For example, the theorem of A.~R.~Aithal \cite{aithal-HP2} extends to give a description of harmonic maps of finite uniton number from any Riemann surface to $\HP^2$ (see also \cite[\S 6(B)]{bahy-wood-HPn}); a generalization of this to harmonic maps of finite uniton number from a Riemann surface into $\HP^n$ was given in \cite[\S 5]{Pacheco-Sp(n)} by using loop group methods.
 
In diagram \eqref{diag:first return0},  the first return path is the only external cycle on $\v$ of length $r+1$ and any other  external cycle on $\v$ is longer.
Next we consider the following refinement of diagram \eqref{diag:first return0}:

\vspace{1ex}
\begin{equation}
\begin{gathered}\label{diag:second return}
\xymatrixcolsep{1.9pc}
\xymatrix{
	\v \ar[r]_(0.4){A'_{\v}} & \ G'(\v)
	\ar[r]_(0.6){A'_{G^{(1)}(\v)}} & \space\space\space\cdots &
\ar[r]_(0.4){A'_{G^{(r-2)}(\v)}} & G^{(r-1)}(\v)\ar[r]_(0.6){ A'_{G^{(r-1)}(\v)}} & G^{(r)}(\v)\ar@/_1.1pc/[lllll]
\ar[r]_(0.7){A'_{G^{(r)}(\v),R}} & R
\ar@/_1.8pc/[llllll]
}
\end{gathered} \end{equation}
where $R:= \bigl(\sum_{i=0}^{r} G^{(i)}(\v)\bigr)^{\perp}$,
and the outer and inner curved arrows represents $A'_{R, \v}$ and
$A'_{G^{(r)}(\v), \v}$, respectively.

By \eqref{first returnc},   the first return map $\mathbf{c}$ corresponds to the \emph{inner} cycle $\v \to G'(\v) \to \cdots \to G^{(r)}(\v) \to \v$;  as in diagram \eqref{diag:first return0} we call this the \emph{first return path} and denote it by $c$. We define the  \emph{second return path} $e=e(\v)$ to be the \emph{outer} cycle
$\v \to G'(\v) \to \cdots \to G^{(r)}(\v) \to R \to \v$ of \eqref{diag:second return}, and the \emph{second return map} $\ee = \ee(\v):\v \to \v$ to be the  corresponding bundle map, i.e., the composition of the second fundamental forms:
\begin{equation*} \label{def:second return}
	\ee = \ee(\v) = A'_{R, \v} \circ A'_{G^{(r)}(\v), R} \circ
	A'_{G^{(r-1)}(\v)} \circ \cdots \circ A'_{G^{(1)}(\v)} \circ A'_{\v}.
\end{equation*}

We can describe cycles on $\v$ of  type $(r+2,2)$ as follows. It is convenient to use the symbol $\circ$ to denote composition of paths as well as operators, for example, $e \circ c$ means the path $c$ followed by the path $e$; note that the corresponding operator is the composition of operators $\ee \circ \cc$.

\begin{lemma} \label{epsilon}
 Let $\v:M \to G_k(\C^n)$ be a harmonic map of finite isotropy order $r$.  Consider the diagram \eqref{diag:second return}.
Then any  cycle $\ep$ on $\v$ of type $(r+2,2)$ is one of the following$:$

\centerline{
{\rm (1)} $e$, \space\space {\rm (2)} $u \circ c$, \space\space
{\rm (3)} $c \circ u$, \space\space  {\rm (4)} $\widehat c$.}
\noindent Here
$u$ denotes the self-arrow on $\v$ and $\widehat c$ is obtained from
$c$ by  inserting a self-arrow on $G^{(i)}(\v)$
for some $i \in \{1,\ldots, r\}$, in particular,
$\image{\widehat \cc} \subseteq \image{\cc}$.
\end{lemma}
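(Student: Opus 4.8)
The statement is a purely combinatorial classification of cycles on $\v$ of type $(r+2,2)$ in the refined diagram \eqref{diag:second return}, so the proof should be a careful bookkeeping argument on the structure of paths in that diagram. First I would list the proper (non-self) arrows available in \eqref{diag:second return}: the ``forward'' arrows $A'_{G^{(i)}(\v)}$ from $G^{(i)}(\v)$ to $G^{(i+1)}(\v)$ for $0 \le i \le r-1$ (internal, since the isotropy order is $r$ so $\v, G'(\v),\ldots,G^{(r)}(\v)$ all lie in $\v \cup \v^\perp$ with all of $G^{(1)},\ldots,G^{(r)}$ inside $\v^\perp$ when $\v$ itself is the ``$\v$'' vertex — actually $G^{(i)}(\v)\subseteq\v^\perp$ for $i$ odd, $\subseteq\v$ for $i$ even, so internal/external has to be read off parity), the arrow $A'_{G^{(r)}(\v),R}$, the inner return arrow $A'_{G^{(r)}(\v),\v}$, and the outer return arrow $A'_{R,\v}$; plus the self-arrows (derivations $D^{\psi}_z$) on every vertex. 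The degree of a path is its number of external arrows; a cycle returning to $\v$ must have even degree, and here we demand degree exactly $2$.

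The core of the argument is: a cycle of degree $2$ that starts and ends at $\v$ must, between its two external arrows, stay entirely on one ``side'' ($\v$-side or $\v^\perp$-side). Because the only vertex of \eqref{diag:second return} lying in $\v$ is $\v$ itself while $G^{(1)}(\v),\ldots,G^{(r)}(\v),R$ all lie in $\v^\perp$, any cycle on $\v$ of degree $2$ consists of: a (possibly empty) string of self-arrows on $\v$, then an external arrow $\v\to\psi_1$ into $\v^\perp$, then a path lying wholly in $\v^\perp$ using only internal (forward) arrows and self-arrows, then an external arrow $\psi_2\to\v$, then a (possibly empty) string of self-arrows on $\v$. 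Since the only arrows out of $\v$ in \eqref{diag:second return} are the self-arrow and $A'_\v$, the first external arrow is forced to be $A'_\v:\v\to G'(\v)$. Inside $\v^\perp$, the only forward arrows are the chain $G^{(1)}(\v)\to G^{(2)}(\v)\to\cdots\to G^{(r)}(\v)\to R$, so the internal portion must traverse $G^{(1)}(\v),\ldots,G^{(r)}(\v)$ in order, possibly ending with the step into $R$, with an arbitrary number of self-arrows inserted at the visited vertices; and the arrows back into $\v$ are only the inner return $A'_{G^{(r)}(\v),\v}$ and the outer return $A'_{R,\v}$. Thus the underlying proper-arrow sequence is either the inner return path $c$ (ending with $A'_{G^{(r)}(\v),\v}$) or the outer return path $e$ (going through $R$ and ending with $A'_{R,\v}$), with self-arrows possibly inserted at $\v$ at the start or end, or at some $G^{(i)}(\v)$, $i\in\{1,\ldots,r\}$, in the middle.

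Now I impose the length constraint $\ell = r+2$. The inner path $c$ has length $r+1$ (namely $A'_\v$, then $A'_{G^{(i)}(\v)}$ for $i=1,\ldots,r-1$, then $A'_{G^{(r)}(\v),\v}$); the outer path $e$ has length $r+2$. Hence if the underlying proper arrows are those of $e$, no self-arrow can be inserted, giving case (1). If the underlying proper arrows are those of $c$, exactly one self-arrow must be inserted; the insertion is either on $\v$ before $c$ — case (3), $c\circ u$ (using the author's convention that $c\circ u$ means $u$ then $c$, i.e. the self-arrow at the start) — or on $\v$ after $c$ — case (2), $u\circ c$ — or on some intermediate vertex $G^{(i)}(\v)$, $1\le i\le r$, giving case (4), $\widehat c$. (One should note that inserting the self-arrow on $\v$ ``before'' $c$ versus ``after'' $c$ genuinely gives the labelled cases (2) and (3); these are distinct paths even though as operators the distinction is whether $D^\v_z$ is applied first or last.) Finally, for the parenthetical claim $\image\widehat\cc\subseteq\image\cc$: write $\widehat\cc = A'_{G^{(r)}(\v),\v}\circ\cdots\circ A'_{G^{(i)}(\v)}\circ D^{G^{(i)}(\v)}_z\circ A'_{G^{(i-1)}(\v)}\circ\cdots\circ A'_\v$; since $A'_{G^{(i-1)}(\v)}$ is surjective onto $G^{(i)}(\v)$ and the remaining composition $A'_{G^{(r)}(\v),\v}\circ\cdots\circ A'_{G^{(i)}(\v)}$ is exactly the ``tail'' of $\cc$ after $A'_{G^{(i-1)}(\v)}$, while $\cc = \bigl(\text{same tail}\bigr)\circ A'_{G^{(i-1)}(\v)}\circ(\cdots)$ with $A'_{G^{(i-1)}(\v)}\circ(\cdots)$ also surjective onto $G^{(i)}(\v)$, both images are the image of that common tail applied to all of $G^{(i)}(\v)$, so $\image\widehat\cc=\image\cc$ (in particular $\subseteq$).

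\textbf{Main obstacle.} The only real subtlety is the careful enumeration in the middle paragraph — making sure one has genuinely listed \emph{all} arrows present in \eqref{diag:second return} (in particular not overlooking that $A'_{G^{(r)}(\v),R}$ and the two curved return arrows are the \emph{only} ways to move between the two sides other than $A'_\v$, and that there are no arrows among non-adjacent Gauss bundles because the isotropy order is exactly $r$, so e.g. $A'_{G^{(i)}(\v),G^{(j)}(\v)}=0$ for $j\ne i+1$ in the relevant range) and then correctly counting lengths so that the constraint $\ell=r+2$ pins down exactly ``$e$ with no self-arrow'' or ``$c$ with exactly one self-arrow''. Everything else is routine once the diagram's arrow set is nailed down; the parity remark ($m$ even) and the side-switching principle are exactly the kind of observations already used in Lemma~\ref{uniqueness test} and can be invoked in the same spirit.
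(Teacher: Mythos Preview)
Your argument is essentially the paper's: split on whether the cycle visits $R$ (forcing $\ep=e$ since the proper arrows already use up length $r+2$) or not (forcing the proper arrows to be those of $c$, with exactly one self-arrow inserted at some $G^{(i)}(\v)$, $0\le i\le r$), and then invoke surjectivity of the $A'_{G^{(i)}(\v)}$ for $\image\widehat\cc\subseteq\image\cc$. One point to clean up: the parenthetical aside ``actually $G^{(i)}(\v)\subseteq\v^\perp$ for $i$ odd, $\subseteq\v$ for $i$ even'' is wrong and contradicts what you (correctly) use in the next paragraph --- since $\v$ has isotropy order $r$, \emph{all} of $G^{(1)}(\v),\ldots,G^{(r)}(\v)$ (and $R$) lie in $\v^\perp$, so there is no parity issue and the arrows $G^{(i)}(\v)\to G^{(i+1)}(\v)$ are internal for $1\le i\le r-1$; just delete that aside.
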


\begin{proof}
If $\ep$ includes $R$ it must be $e$, otherwise it would be too long.  If not, it includes $G^{(r)}(\v) \to \v$ and must be $c$ with a self-arrow on $G^{(i)}(\v)$ inserted for some $i \in \{0,1,\ldots, r\}$ giving (2), (3) or (4).

Since the maps $A'_{\v}$, $A'_{G^{(1)}(\v)}, \ldots, A'_{G^{(r-1)}(\v)}$ in \eqref{diag:second return} are surjective, it is clear that  $\image{\widehat \cc} \subseteq \image{\cc}$.
\end{proof}

\begin{example} \label{ex:c2=0}
Let $\v:M\to G_k(\C^n)$ be a harmonic map of finite isotropy order $r$ with $\cc^2=0$.  Let $\al$ be any subbundle of $\v$ with $\image \cc  \subseteq \al \subseteq \ker \cc $ and set
$\be = \v \ominus \al$. 
 Define holomorphic subbundles $\al_i$ of $G^{(i)}(\v)$ inductively by
$\al_0 = \al$, $\al_{i} = \image(A'_{G^{(i-1)}(\v)}|_{\al_{i-1}})$  for $i = 1,\ldots,r$, and set
$\be_i = G^{(i)}(\v) \ominus \al_i$, 
$R = \bigl(\sum_{i=0}^{r} G^{(i)}(\v)\bigr)^{\perp}$;
 note that $R$ or some of the $\al_i$ may be zero, but they
 are non-zero if  all $G^{(i)}(\v)$ \ $(i=0,1,\ldots, r)$ are irreducible.

 Then we have the following refinement of diagram \eqref{diag:second return}\/$:$

\vspace{2ex}
\begin{equation}
\begin{gathered}\label{diag:second return2}
\xymatrixrowsep{0.6pc}
\xymatrixcolsep{2.4pc}
\xymatrix{
\be_0 = \be \ar[r]\ar[rdd] & \be_1 \ar[r]\ar[rdd] & \space\space\space\cdots \ar[rdd]\ar[r] &\be_{r-1} \ar[r]\ar[rdd]
	 &\be_r\ar[lllldd]\ar[rrd]
\\
{} & {} &{} &{} &{} &{} &R\ar@/^2.4pc/[lllllld]\ar@/_2.4pc/[llllllu]
\\
\al_0 = \al \ar[r]\ar[uu] & \al_1 \ar[r]\ar[uu] & \space\space\space			\cdots \ar[r] &\al_{r-1} \ar[r]\ar[uu] & \al_r \ar[uu]\ar[rru]
}
\vspace{2.5ex}
\end{gathered} \end{equation}

Let $\ep$ be one of the cycles of Lemma \ref{epsilon}. Then the operator $\epp|_\al$ coincides with the operator corresponding to one of the following paths$:$

{\rm (1)}$:$ $\al \to \al_1 \to \cdots \to \al_r \to R \to \v \,$ (if $\ep=e$);

{\rm (2)}$:$ zero cycle (if $\ep=u\circ c$)$;$

{\rm (3)}$:$ $\al \to \be \to \be_1 \to \cdots \to \be_r \to \al\,$ (if $\ep=c\circ u$)$;$

{\rm (4)}$:$ $\al \to \al_1 \to \cdots \al_i \to \be_i \to \cdots \to \be_r \to \al\,$ (if $\ep=\widehat{c} $).

In (2), with $\ep=u\circ c$, then $\epp|_\al=0$ since $\al \subseteq \ker \cc$.
In (3) and (4) we have included an  arrow $\al_i \to \be_i$; the corresponding operator $\al_i \to \be_i$ is, of course, the second fundamental form $A'_{\al_i,\be_i}$.  Comparing with
diagram \eqref{diag:second return}, this is the {$\be_i$-}component of the restriction to $\al_i$ of the operator
$D^{G^{(i)}(\v)}_z$ corresponding to the self-arrow on $G^{(i)}(\v)$. If we had included a self-arrow on $\alpha_i$ instead of the arrow $\al_i \to \be_i$, the corresponding  operator would be the other component of  $D^{G^{(i)}(\v)}_z|_{\al_i}$, namely $\pi_{\al_i} \circ D^{G^{(i)}(\v)}_z:\Gamma(\al_i) \to \Gamma(\al_i)$, and the operator $\epp$ would be zero on $\al$ since $\al \subseteq \ker \cc$.
\end{example}

Since there are four possibilities for $w$ in Lemma \ref{epsilon}, $e$ does not satisfy the conditions of Proposition \ref{nilpotency test} and does not, in general, give a nilpotent cycle on $\v$ or $\al$; however when combined with $c$ we can construct nilpotent cycles from $e$ as we shall see below.

Define the \emph{nilorder} of the first return path $c$ to be  the nilorder of its corresponding operator $\cc:\v \to \v$, i.e., the least value of $p \in \{1,2,\ldots, \infty\}$ such that $\cc^p=0$; as above this is finite for a harmonic map of finite uniton number.
 We now discuss how to modify a harmonic map of finite uniton number and finite isotropy order into a Grassmannian by increasing its isotropy order until it becomes \emph{reducible},
 leading to a description of the harmonic map.  The modifications are given by replacement of a subbundle found by finding a sequence of new nilpotent cycles.  There are three different cases, which we discuss in the next three subsections.

\subsection{First return map of nilorder $2$}\label{subsec:nilorder 2}

\begin{proposition} \label{ce}
Let $\v:M \to G_k(\C^n)$ be a harmonic map of finite uniton number and finite isotropy order, with first return  map $\cc$ of nilorder $2$.
Suppose that $\al$ is any subbundle of $\v$ with $\image \cc  \subseteq \al \subseteq \ker \cc $.
Let $\DD_1$ be the bundle map given
 by $\cc \circ \ee|_{\al}:\al \to \al$\,. Then $\DD_1$ is nilpotent.
\end{proposition}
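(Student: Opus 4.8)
The strategy is to apply Proposition \ref{nilpotency test} with $\psi = \v$ and the given subbundle $\al$, taking the cycle $C$ to be $c \circ e$ (so that the corresponding operator is $\Cc = \cc \circ \ee|_\al$, which by hypothesis sends sections of $\al$ to $\al$), of type $(\ell,m) = (2r+4, 4)$, where $r$ is the isotropy order. To do this I must verify the hypothesis of Proposition \ref{nilpotency test}: for each $j \in \N$, every cycle on $\v$ of type $(j(2r+4), 4j)$ is either zero on $\al$ or equal to $C^j = (c\circ e)^j$. I would establish this by invoking Lemma \ref{uniqueness test}, so the work reduces to checking its two hypotheses: (i) any non-zero cycle on $\v$ of degree $4$ has length at least $2r+4$; and (ii) any cycle on $\v$ of type $(2r+4,4)$ is zero on $\al$ or equal to $c\circ e$.

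For hypothesis (i): a cycle on $\v$ of degree $4$ decomposes into external sub-cycles on $\v$, and since $\v$ has isotropy order $r$, the shortest external cycle on $\v$ has length $r+1$ (the first return path $c$) and every external cycle of degree $2$ has length at least $r+1$; hence any cycle of degree $4$ has length at least $2(r+1) = 2r+2$. I need the sharper bound $2r+4$, which should come from the refined diagram \eqref{diag:second return}: a cycle of type $(\cdot,4)$ that is a concatenation of two degree-$2$ external cycles of total length $2r+2$ would have to be $c \circ c$, but $\cc^2 = 0$ since $\cc$ has nilorder $2$, so that cycle is zero (not merely on $\al$, but identically); a non-zero degree-$4$ cycle must therefore use the longer outer route through $R$ at least once, forcing length at least $(r+1) + (r+2) + (\text{possible self-arrows})$. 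I would argue that length exactly $2r+4$ with degree $4$ forces a composition of one copy of (a length-$r+1$ degree-$2$ cycle) with one copy of (a length-$r+3$ degree-$2$ cycle) or two length-$r+2$ cycles, and then use Lemma \ref{epsilon} (which classifies the degree-$2$ cycles of length $r+2$ on $\v$ as $e$, $u\circ c$, $c\circ u$, or $\widehat c$) together with $\cc^2 = 0$ to pin down which compositions survive on $\al$.

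For hypothesis (ii): I would enumerate the cycles of type $(2r+4,4)$ on $\v$. Each is a concatenation $w_2 \circ w_1$ of two external cycles on $\v$, each of degree $2$; by hypothesis (i) applied with degree $2$ in the diagram \eqref{diag:second return} (where the minimal degree-$2$ external cycle has length $r+1$ and the next possibilities have length $r+2$ by Lemma \ref{epsilon}), the only length combinations summing to $2r+4$ are $(r+1)+(r+3)$, $(r+2)+(r+2)$, and $(r+3)+(r+1)$. A length-$(r+1)$ degree-$2$ cycle must be $c$ itself; a length-$(r+2)$ degree-$2$ cycle is one of the four options in Lemma \ref{epsilon}; and the length-$(r+3)$ ones are $c$ with two inserted self-arrows, or $e$ with one inserted self-arrow, etc. Now I use the constraints: $\cc^2=0$ kills any composition containing $c$ twice (in particular $c \circ c$, $c \circ \widehat c$, $\widehat c \circ c$, $c \circ (u\circ c)$, $(c\circ u)\circ c$, and so on) \emph{on all of $\v$}, hence certainly on $\al$; and since $\al \subseteq \ker\cc$ and $\image\cc \subseteq \al$, any $w_1$ ending in a factor that lands in $\ker\cc$ followed by $c$, or any $w_2$ beginning with a spurious self-arrow on $\v$ after $c$, is zero on $\al$ (this is exactly the kind of bookkeeping already carried out in Example \ref{ex:c2=0}). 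After discarding all these, the only survivor should be $c \circ e$, giving (ii).

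The main obstacle I anticipate is the combinatorial case analysis in hypotheses (i) and (ii): making sure the bound $2r+4$ is sharp and that every cycle of that type has been accounted for, including all the ways self-arrows can be inserted along the chain $\v \to G'(\v) \to \cdots \to G^{(r)}(\v)$. The key simplifications that make this tractable are (a) the nilorder-$2$ condition $\cc^2 = 0$, which annihilates any cycle visiting the "$c$-route" twice, and (b) the containments $\image\cc \subseteq \al \subseteq \ker\cc$, which let me discard cycles factoring through $\cc$ at the wrong place when restricted to $\al$. Once (i) and (ii) are in hand, Lemma \ref{uniqueness test} upgrades them to the hypothesis of Proposition \ref{nilpotency test} for all $j$, and that proposition delivers nilpotency of $\DD_1 = \cc\circ\ee|_\al$ directly.
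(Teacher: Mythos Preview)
Your overall strategy---applying Proposition \ref{nilpotency test} via Lemma \ref{uniqueness test} to the cycle $D_1 = c \circ e$ in diagram \eqref{diag:second return}, with $\psi = \v$---is exactly the approach the paper takes. However, you have made a length-counting error that breaks the argument as stated.

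The first return path $c$ has length $r+1$ (it consists of the $r$ arrows $\v \to G'(\v) \to \cdots \to G^{(r)}(\v)$ followed by the arrow $G^{(r)}(\v) \to \v$), and the second return path $e$ has length $r+2$. Hence $c \circ e$ has type $(2r+3,\,4)$, not $(2r+4,\,4)$. With your claimed type, hypothesis (i) of Lemma \ref{uniqueness test} is simply false: $c \circ e$ itself (when non-zero) is a degree-$4$ cycle of length $2r+3 < 2r+4$. Likewise your hypothesis (ii) would be checking the wrong type, and the case analysis you outline (length partitions $(r+1)+(r+3)$, $(r+2)+(r+2)$, $(r+3)+(r+1)$) is off by one.

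Once you correct the type to $(2r+3,4)$, the argument is straightforward and matches the paper: a non-zero degree-$4$ cycle on $\v$ is a concatenation of two non-zero degree-$2$ cycles, each of length at least $r+1$; since $\cc^2=0$, they cannot both be $c$, so at least one has length $\geq r+2$, giving total length $\geq 2r+3$. For (ii), a cycle of type $(2r+3,4)$ is $c \circ \ep$ or $\ep \circ c$ with $\ep$ as in Lemma \ref{epsilon}; the latter vanishes on $\al$ since $\al \subseteq \ker\cc$, and for the former, cases (2), (3), (4) of that lemma die via $\al \subseteq \ker\cc$ or $\image\widehat\cc \subseteq \image\cc \subseteq \ker\cc$, leaving only $c \circ e$. (Note also that a length-$(r+2)$ degree-$2$ cycle need not pass through $R$: the options $u\circ c$, $c\circ u$, $\widehat c$ do not.)
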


\begin{proof} Let $r$ be the isotropy order  of $\v$, and  consider the diagram \eqref{diag:second return}.
let $D_1$ be the cycle on $\v$ given by $c \circ e$; note that $D_1$ has type $(2r+3, 4)$. We show that it satisfies the conditions of Lemma \ref{uniqueness test}, namely that
(i) any non-zero cycle on $\v$ of degree $4$ has length at least $2r+3$;
(ii)  any cycle on $\v$ of type $(2r+3, 4)$ is zero on $\al$ or is equal to $D_1$.
The result then follows from Proposition \ref{nilpotency test}
 (with $\psi = \v$).

	(i) Let $B$ be a  non-zero  cycle on $\v$ of degree $4$. It is the composition of two  non-zero cycles on $\v$	 of degree $2$; each of these must have length at least $r+1$. However, since $\cc^2=0$  and $B$ is non-zero, this length must be at least $(r+1) +(r+2) = 2r+3$;  in fact, if $B$ has length $2r+3$, then $B$ must be $c \circ \ep$ or
$\ep \circ c$, with $\ep$ as in Lemma \ref{epsilon}. 

	(ii) Let $B$ be a cycle on $\v$ of type $(2r+3,4)$ which is non-zero on $\al$. Then by (i)  $B$ must be $c \circ \ep$ with $\ep$ as in Lemma \ref{epsilon}, since  $\ep \circ c$ vanishes on $\al$.  In case (2) of that lemma, this $B$ is zero on $\al$ since $\cc$, and so $\epp$, vanishes on $\al$.  In cases (3) and (4), the image of $\epp$ is contained in $\image \cc$ so the composition $\cc \circ \epp$ is zero.  Hence we have case (1): $\ep = e$, so $B=c \circ e=D_1$.
\end{proof}

The proof is illustrated by diagram \eqref{diag:second return2}.
The  only possible non-zero component of the map
$\DD_1 =  \cc \circ \ee|_{\al}:\al \to \al$
 is given by the composition
\begin{equation} \label{D1}
\al \to \al_1 \to \ldots \to \al_r \to R \to \be_0 \to \be_1 \to \ldots \to \be_r \to \al\,.
\end{equation}
In \cite[p.~276]{burstall-wood}, this was shown to be holomorphic (see also Lemma \ref{e holo}), and so nilpotent when $M =S^2$;
Proposition \ref{ce} generalizes this.

Our first application is to extend a theorem of
Aithal \cite{aithal-G25} to an arbitrary Riemann surface.

\begin{proposition} \label{G2C5}
Let $\v:M \to G_2(\C^5)$ be a harmonic map of finite uniton number which is $\pa'$- and $\pa''$-irreducible.  Then $G'(\v)$ is $\pa'$-reducible.
\end{proposition}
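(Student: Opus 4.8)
The plan is to apply Proposition \ref{ce} to the harmonic map $\v:M \to G_2(\C^5)$ and argue that nilpotency of $\DD_1 = \cc\circ\ee|_\al$ forces the first return map $\cc$ to vanish identically, which in turn makes $G'(\v)$ reducible. First I would record the structural constraints coming from the hypotheses. Since $\v$ is $\pa'$- and $\pa''$-irreducible, $\rk G^{(i)}(\v) = 2$ for at least a few consecutive $i$ near $0$; combined with the dimension count $\sum_i \rk G^{(i)}(\v) \le 5$, the harmonic sequence can only support $G^{(0)}(\v) = \v$, $G^{(1)}(\v) = G'(\v)$, and then $G^{(2)}(\v)$ of rank $\le 1$ (since $2+2+2 > 5$). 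So the isotropy order $r$ is small: one checks that $r = 1$ or $r = 2$ are the only possibilities compatible with finite isotropy order and irreducibility, and in each case the return bundle $R$ and the various $\al_i$ in diagram \eqref{diag:second return2} are forced to be line bundles or zero. In particular $\cc:\v\to\v$ has rank $\le 1$ image, so automatically $\cc^2 = 0$ (a rank-$\le 1$ nilpotent-or-not endomorphism of a rank-$2$ bundle satisfying the trace condition from nilconformality squares to zero); thus the hypothesis ``$\cc$ of nilorder $2$'' of Proposition \ref{ce} is met unless $\cc = 0$ already, in which case we are immediately done since $\cc = 0$ means $G'(G'(\v))\subseteq\v$ fails to be proper... (more precisely $\cc=0$ is exactly the statement we want, see below).

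Next I would take $\al$ with $\image\cc \subseteq \al \subseteq \ker\cc$; because $\cc$ has rank $\le 1$, such an $\al$ is a line subbundle of $\v$, and Proposition \ref{ce} tells us $\DD_1 = \cc\circ\ee|_\al$ is nilpotent. But $\DD_1$ is an endomorphism of the \emph{line bundle} $\al$, so by Remark \ref{rem:nilpotency test}(i) nilpotent means $\DD_1 = 0$, i.e., $\cc\circ\ee$ is zero on $\al$. Following diagram \eqref{diag:second return2}, the single possibly-nonzero component of $\DD_1$ is the composition \eqref{D1} through $\al \to \al_1 \to \cdots \to \al_r \to R \to \be_0 \to \cdots \to \be_r \to \al$. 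Vanishing of this composition, together with the surjectivity of the maps $A'_{G^{(i)}(\v)}$ along the top of the harmonic sequence, should force a link in the chain to vanish — concretely, either $R = 0$ (so $\sum_{i=0}^r G^{(i)}(\v) = \CC^5$, which with the rank count gives $r\le 1$ and a very constrained picture), or the second fundamental form $A'_{R,\v}$ or $A'_{\al_r,\be_r}$ vanishes, or one of the $\al_i = 0$. I would run through these small-rank cases and show each of them implies $\cc = 0$: the key point is that $\cc$ being the composition $A'_{G^{(r)}(\v),\v}\circ\cdots\circ A'_{\v}$ around the inner cycle, a break anywhere in the parallel outer cycle that carries $\DD_1$ propagates (via the shared surjective arrows $A'_{\v},\ldots,A'_{G^{(r-1)}(\v)}$) to kill the return map.

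Finally, having shown $\cc(\v) = 0$, I translate this into reducibility of $G'(\v)$. By definition $\cc = A'_{G^{(r)}(\v),\v}\circ A'_{G^{(r-1)}(\v)}\circ\cdots\circ A'_{\v}$, and since all the intermediate maps are surjective, $\cc = 0$ forces $A'_{G^{(r)}(\v),\v} = 0$ on $\image(A'_{G^{(r-1)}(\v)}\circ\cdots) = G^{(r)}(\v)$, i.e., the first return path degenerates; combined with the finiteness of the isotropy order and the rank count this means the harmonic sequence of $G'(\v)$ (shifted by one from that of $\v$) cannot be $\pa'$-irreducible — the $\pa'$-Gauss bundle of $G'(\v)$, which is $G^{(2)}(\v)$ of rank $\le 1 < 2 = \rk G'(\v)$, witnesses $\pa'$-reducibility directly. (Here I should be a little careful: $\pa'$-irreducibility of $\v$ gives $\rk G'(\v) = 2$, but there is no a priori reason for $G'(\v)$ to be $\pa'$-irreducible, and the dimension count $2+2+\rk G^{(2)}(\v)\le 5$ already shows $\rk G^{(2)}(\v) \le 1$, hence $\rk G'(G'(\v)) \le 1 < \rk G'(\v)$, which is exactly $\pa'$-reducibility of $G'(\v)$.)

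The main obstacle I anticipate is not the final dimension count — that part is essentially forced — but rather the bookkeeping in the middle step: carefully enumerating the small cases for $r$ (is $r=2$ actually achievable here, or does irreducibility plus the rank bound already force $r=1$?), pinning down exactly which $\al_i$, $\be_i$, $R$ are zero in each case, and verifying in each surviving case that vanishing of the composition \eqref{D1} genuinely propagates back to $\cc = 0$ rather than being vacuously satisfied for an unrelated reason. In other words, the delicate point is ensuring that the single component of $\DD_1$ we control via Proposition \ref{ce} is the "right" one — that there is no degenerate configuration of the diagram \eqref{diag:second return2} in which $\DD_1$ vanishes automatically while $\cc$ remains nonzero. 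I would handle this by using $\pa''$-irreducibility (which we have not yet fully exploited) to rule out the degenerate configurations: $\pa''$-irreducibility of $\v$ should prevent the backward Gauss sequence from collapsing and hence keep enough of diagram \eqref{diag:second return2} nondegenerate that $\DD_1 = 0 \Rightarrow \cc = 0$ holds without exception.
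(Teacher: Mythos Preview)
Your proposal has two genuine gaps.

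\textbf{The dimension-count shortcut is invalid.} You write that ``the dimension count $2+2+\rk G^{(2)}(\v)\le 5$ already shows $\rk G^{(2)}(\v) \le 1$'', but this sum bound only holds when $\v$, $G'(\v)$ and $G^{(2)}(\v)$ are mutually orthogonal. Consecutive Gauss bundles are always orthogonal, but $G^{(2)}(\v)$ and $\v$ are orthogonal only when the isotropy order is at least $2$. Here the isotropy order is exactly $1$: both $G'(\v)$ and $G''(\v)$ have rank $2$ by irreducibility, and $\v$, $G'(\v)$, $G''(\v)$ cannot all be mutually orthogonal in $\C^5$. So $G^{(2)}(\v)$ sits in $G'(\v)^\perp = \v \oplus R$ (rank $3$) and may well have rank $2$; the shortcut says nothing.

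\textbf{The target ``$\cc=0$'' is wrong, and the chain analysis is misdirected.} Since the isotropy order is exactly $1$, the first return map $\cc$ is \emph{nonzero}; you cannot hope to deduce $\cc=0$ from $\DD_1=0$. The paper's argument runs differently: with $r=1$ all the vertices $\al=\al_0,\be_0,\al_1,\be_1,R$ in diagram \eqref{diag:second return2} have rank one, and $\DD_1=0$ forces some arrow in the composition \eqref{D1}
\[
\al \to \al_1 \to R \to \be_0 \to \be_1 \to \al
\]
to vanish. One checks that every arrow except $\al_1 \to R$ is nonzero (using $\pa'$- and $\pa''$-irreducibility and $\cc\neq 0$), so it is precisely the map $\al_1 \to R$ that vanishes. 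Combined with $\al \subseteq \ker\cc$, which gives $A'_{G'(\v),\v}|_{\al_1}=0$, this means $A'_{G'(\v)}$ kills the rank-one subbundle $\al_1$ of $G'(\v)$, and that is exactly $\pa'$-reducibility of $G'(\v)$. Your list of ``links that could vanish'' omits this arrow entirely and instead names $A'_{R,\v}$ and a nonexistent $A'_{\al_r,\be_r}$, so the case analysis never reaches the correct conclusion.
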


\begin{proof} The first return map $\cc:\v\to\v$ is nilpotent as $\v$ is of finite uniton number; since $\v$ has rank $2$, we must have $\cc^2=0$.
By irreducibility, $G'(\v)$ and $G''(\v)$ again have rank $2$; since they all lie in $\C^5$, they cannot be mutually orthogonal so that $\v$ has isotropy order precisely $1$. Thus we have diagram \eqref{diag:second return2} with $r=1$ and all vertices of rank one. Now the  bundle map $\DD_1 = \cc \circ \ee|_{\al}: \al \to \al$ is nilpotent and so zero; however, all second fundamental forms  in \eqref{D1} are non-zero, except possibly $\al_1 \to R$, so this last arrow must be zero giving the conclusion.
\end{proof}

Our second application is to extend the description of harmonic maps from $S^2$ to $G_2(\C^n)$ in \cite[Theorem 3.3]{burstall-wood} to harmonic maps of finite uniton number from any Riemann surface.
This uses the idea of forward replacement (\S \ref{subsec:Grass}) to increase the isotropy order, see
\cite[Proposition 3.4]{burstall-wood} and
\cite[Theorem 7.2]{chern-wolfson}; a slightly more general and precise version is given in \cite[Proposition 3.2]{bahy-wood-G2}.

\begin{proposition} \label{G2}
Let $\v:M \to G_2(\C^n)$ be a harmonic map of finite uniton number, and finite isotropy order $r$ \  $(r \geq 1)$.
Then the harmonic map
$\wt{\v}$ obtained from $\v$ by forward replacement of the image of the first return map of $\v$ has isotropy order $r+1$.
\end{proposition}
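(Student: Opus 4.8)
The plan is to run the argument used for $M=S^2$ in \cite[Proposition 3.2]{bahy-wood-G2} (cf.\ \cite[Proposition 3.4]{burstall-wood} and \cite[Theorem 7.2]{chern-wolfson}), to isolate the single step in it that uses compactness of the domain --- the nilpotency of the cycle \eqref{D1} --- and to replace that step by an appeal to Proposition \ref{ce}, which supplies exactly this nilpotency for an arbitrary Riemann surface.

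First I would settle the set-up and well-definedness. Since $\v$ has finite uniton number its first return map $\cc=\cc_r(\v)$ is nilpotent; because $\rk\v=2$ this forces $\cc^2=0$, and because the isotropy order $r$ is finite we have $\cc\not\equiv0$ (along the first return path the maps $A'_{\v},A'_{G^{(1)}(\v)},\ldots,A'_{G^{(r-1)}(\v)}$ are surjective, while $A'_{G^{(r)}(\v),\v}\neq0$ since $\v$ is not orthogonal to $G^{(r+1)}(\v)$). Filling out zeros, $\be:=\image\cc$ is therefore a holomorphic line subbundle of $\v$ with $\be=\ker\cc$, and $\be\subseteq\ker(A'_{\v^{\perp}}\circ A'_{\v})$: for $r\geq2$ the composition $A'_{\v^{\perp}}\circ A'_{\v}$ vanishes identically (as $G^{(-1)}(\v)\perp G^{(1)}(\v)$ forces $A'_{G^{(1)}(\v),\v}=0$), while for $r=1$ one has $A'_{\v^{\perp}}\circ A'_{\v}=\cc_1$, whose square is zero. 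Hence the forward replacement $\wt{\v}=(\v\ominus\be)\oplus A'_{\v}(\be)$ is a well-defined harmonic map; this is the map $\wt{\v}$ of the statement.

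The main work is then to analyse the harmonic sequence of $\wt{\v}$ via diagram \eqref{diag:second return2}, taken with $\al=\image\cc=\ker\cc$ and $\be_0=\v\ominus\al$. Forward replacement shifts the harmonic sequence by one step, so $G^{(i)}(\wt{\v})$ is assembled from the vertices $\be_i$ and $\al_{i+1}$ of that diagram. For $1\leq i\leq r$ the orthogonality $\wt{\v}\perp G^{(i)}(\wt{\v})$ follows from the orthogonalities $G^{(j)}(\v)\perp G^{(k)}(\v)$ with $1\leq|j-k|\leq r$ guaranteed by the isotropy order of $\v$. The borderline case $\wt{\v}\perp G^{(r+1)}(\wt{\v})$ reduces, after a chase along \eqref{diag:second return2}, to the vanishing of the composition of second fundamental forms $\al_0\to\al_1\to\cdots\to\al_r\to R\to\be_0$ that forms the first half of \eqref{D1}; this composition is the $\be_0$-component of $\ee|_{\al}$, and since $\cc|_{\al}=0$ while $\cc|_{\be_0}\neq0$, it vanishes precisely when $\DD_1=\cc\circ\ee|_{\al}$ is zero on $\al$. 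As $\al$ has rank one, this is the assertion that $\DD_1$ is nilpotent, which is exactly Proposition \ref{ce}. Finally, to see that the isotropy order of $\wt{\v}$ is exactly $r+1$, I would chase \eqref{diag:second return2} one further step: the hypothesis that $\v$ has isotropy order \emph{precisely} $r$, i.e.\ $A'_{G^{(r)}(\v),\v}\neq0$, propagates to $A'_{G^{(r+1)}(\wt{\v}),\wt{\v}}\neq0$, so that $\wt{\v}\not\perp G^{(r+2)}(\wt{\v})$.

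The hard part will be the bookkeeping in the previous paragraph: correctly identifying each $G^{(i)}(\wt{\v})$ in terms of the vertices of \eqref{diag:second return2}, and checking that the vanishing delivered by Proposition \ref{ce} is exactly --- not merely sufficient for --- the failure of orthogonality at level $r+1$, and similarly that the non-vanishing at level $r+2$ is precisely what the ``isotropy order exactly $r$'' hypothesis produces. Once the $S^2$ proof of \cite[Proposition 3.2]{bahy-wood-G2} is reorganised so that nilpotency of \eqref{D1} becomes its only input special to $S^2$, the finite uniton number case follows at once.
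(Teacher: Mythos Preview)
Your proposal is correct and follows essentially the same approach as the paper's own proof: both reduce to the diagram \eqref{diag:second return2} (refined as in \eqref{diag:G2n}) with $\al=\image\cc$, identify $\wt\v=\be_0\oplus\al_1$ and $G^{(r+1)}(\wt\v)\subseteq\be_{r+1}\oplus\al_0\oplus\be_0$, and use Proposition~\ref{ce} to kill the arrow $\al_{r+1}\to\be_0$ (the only obstruction to isotropy order $\geq r+1$), with the ``exactly $r+1$'' coming from non-vanishing of the path $\be_0\to\cdots\to\be_r\to\al_0\to\al_1$. The paper additionally separates out the case where some $G^{(i)}(\v)$ is $\pa'$-reducible (so that $\al_{i+1},\ldots,\al_{r+1}$ vanish and Proposition~\ref{ce} is not needed), but this is a matter of exposition; one notational slip in your write-up is that you first name $\image\cc$ as $\be$ and then switch to $\al$, which you should make consistent.
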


\begin{proof} Since $\v$ has finite uniton number and rank $2$, we have $\cc^2=0$.  We use Proposition \ref{ce} to generalize the proof given in
\cite{burstall-wood, chern-wolfson} for $M=S^2$ in the case when
$\v$, $G^{(1)}(\v), \ldots, G^{(r)}(\v)$ are all $\pa'$-irreducible:

In diagram \eqref{diag:second return2}, set $\al_0=\al =$ the image of the first return map  $\cc=\cc_r(\v)$; note that $\al\subset \ker \cc$ as $\cc^2=0$. Set $\al_{r+1} = \image A'_{G^{(r)}(\v)}|_{\al_r}$; since $G^{(r)}(\v)$ is $\pa'$-irreducible, $\al_{r+1}$ and the map $\al_r \to \al_{r+1}$ are non-zero.
 Setting $\be_{r+1} = R \ominus \al_{r+1}$, the diagram \eqref{diag:second return2} becomes

\vspace{-2.5ex}
\begin{equation}
\begin{gathered}\label{diag:G2n}
\xymatrixrowsep{1.7pc}
\xymatrixcolsep{2pc}
\xymatrix{
\be_0 \ar[r]\ar[rd] & \be_1 \ar[r]\ar[rd] & \space\space\space\cdots \ar[r]\ar[rd] &\be_{r-1} \ar[r]\ar[rd] &\be_r\ar[lllld]\ar[r]\ar[rd] & \be_{r+1} \ar[r]\ar[rd] & \be_0
\\
\al_0 \ar[r]\ar[u] & \al_1 \ar[r]\ar[u] & \space\space\space			\cdots \ar[r] &\al_{r-1} \ar[r]\ar[u] & \al_r \ar[r]\ar[u] & \al_{r+1}\ar[r]\ar[u]\ar[ur] & \al_0\ar[u]
}
\end{gathered} \end{equation}
where we repeat the first column.  Note that all vertices are of rank one except possibly $\be_{r+1}$ (which could even be zero).  The harmonic map obtained from $\v$ by forward replacement of $\al$ is
$\wt\v = \be_0 \oplus \al_1$.

Then, from Proposition \ref{ce}, the  bundle map $\DD_1 = \cc \circ \ee|_{\al}$ is nilpotent, so zero.  Since all the other arrows in \eqref{D1} are non-zero, this means that
$\al_{r+1} \to \be_0$ must be zero.  Hence, $G^{r+1}(\wt\v)$ lies in $\be_{r+1} \oplus \al_0$;
this is orthogonal to $\wt\v$, showing that the isotropy order of $\wt\v$ is at least $r+1$.
Since the map $A'_{\al_0,\al_1}\circ \cc(\v)|_{\be_0}$ corresponding to the path $\be_0 \to \ldots \to \be_r \to \al_0 \to \al_1$ is non-zero, the isotropy order of $\tilde{\varphi}$ is precisely $r+1$  with $\pi_{\al_1} \circ \cc(\wt\v)|_{\be_0}=A'_{\al_0,\al_1}\circ \cc(\v)|_{\be_0}$.

If $G^{(i)}(\v)$ is $\pa'$-reducible for some  $i \in \{0,1, \ldots, r\}$, we have the same diagram with $\al_{i+1}, \ldots, \al_{r+1}$ zero.
The  map
$\ee|_{\al}$ is thus zero and the proof goes through without needing Proposition \ref{ce} --- see the proof of \cite[Proposition 3.2]{bahy-wood-G2} for more explicit diagrams  when $i \neq 0$.
\end{proof}

\begin{remark} \label{rem:irr}
 Let $\v:M \to G_k(\C^n)$ be harmonic and let $\al$ be a holomorphic subbundle of\/ $\ker(A'_{\v^{\perp}} \circ A'_{\v})$ (cf.\ \S \ref{subsec:Grass}). Set $\al_1:= A'_{\v}(\al)$.

{\rm (i)} \emph{If $\v$ is $\pa'$-irreducible or, more generally, $\rk \al_1 = \rk \al$}, then the inverse of forward replacement of $\al$ is backward replacement of $\al_1$.  In the case $k=2$, forward replacement of a line subbundle $\al$ with $A'_{\v}(\al) \neq 0$ increases (resp.\ decreases) the isotropy order by \emph{precisely} one according as $\al = \image \cc$ (resp.\ $\al \neq \image \cc$); the two operations are inverse and give bijections \cite[\S 3]{bahy-wood-G2}.

{\rm (ii)} \emph{If,  on the other hand, $\rk \al_1 < \rk \al$
so that} $\v$ is $\pa'$-reducible, the forward replacement reduces the rank of $\v$ so that it cannot be inverted by a backward replacement.
\end{remark}

We may carry out the  forward replacement operation in Proposition \ref{G2} repeatedly until we obtain a $\pa'$-reducible harmonic map $\v:M \to G_2(\C^n)$ (which must happen for dimension reasons), so that
we now need to understand such maps.  Firstly, $\rk G'(\v)$ ($= \rk A'_{\v}\,$) is zero  if and only if $\v$ is antiholomorphic.  We describe two other types of $\pa'$-reducible maps which generally have $\rk G'(\v) = 1$:

(i) A \emph{Frenet pair} is a map $\v:M \to G_2(\C^n)$ of the form
$\v = G^{(j)}(h) \oplus G^{(j+1)}(h)$ for some holomorphic map $h:M \to \CP^{n-1}$ and some $j \in \{0, 1,2,\ldots, n-2\}$ such that $G^{(j)}(h)$ and $G^{(j+1)}(h)$ are non-zero --- this is automatic if $h$ is \emph{(linearly) full}, i.e. its image lies in no proper projective subspace of $\CP^{n-1}$.  Note that a Frenet pair is strongly isotropic, i.e., of infinite isotropy order.

(ii) A \emph{mixed pair} is a map $\v:M \to G_2(\C^n)$
of the form $\v = g \oplus h$ where $h,g:M \to \CP^{n-1}$ are holomorphic and antiholomorphic, respectively, with  $h \perp g$ and $G'(h) \perp g$.
Note that $\v^{\perp}$ is strongly isotropic.

Both types of map  are harmonic \cite[\S 3]{burstall-wood} and of finite uniton number.
We have the following useful characterization of them which extends that in \cite[Proposition 3.7]{burstall-wood} to arbitrary Riemann surfaces.  Note that this holds for finite or infinite isotropy order.

\begin{proposition} \label{Frenet-mixed}
Let $\v:M \to G_2(\C^n)$ be a harmonic map of finite uniton number
with $\rk G'(\v) = 1$.  Then $\v$ is a Frenet pair or a mixed pair if and only
\begin{equation} \label{Frenet-mixed-condn}
A''_{\v}(\v \ominus \ker A'_{\v}) = 0.
\end{equation}
\end{proposition}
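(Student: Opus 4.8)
The plan is to prove both implications by unwinding the definitions of the two types of pair in terms of second fundamental forms, and then translating the condition \eqref{Frenet-mixed-condn} into the diagram language. First I would set $\delta := \v \ominus \ker A'_{\v}$, a line subbundle of $\v$ since $\rk G'(\v) = 1$, and note that $A'_{\v}$ restricts to an isomorphism $\delta \to G'(\v)$ (after filling out zeros), while $A'_{\v}$ vanishes on $\ker A'_{\v}$. Dually, $A''_{\v}$ maps $G''(\v)$ isomorphically onto $\ker A''_{\v^{\perp}}$... more precisely I would use the adjoint relations $A''_{\psi,\v} = -(A'_{\v,\psi})^*$ from \S\ref{subsec:Grass} to see that $\ker A'_{\v} = (\image A''_{\v^\perp})^\perp \cap \v$ and similar identities. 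The condition $A''_{\v}(\delta) = 0$ then says precisely that $\delta \subseteq \ker A''_{\v}$, i.e., $\delta$ is an antiholomorphic line subbundle of $\v$ (using \S\ref{subsec:Grass}: $A''_{\v}$ antiholomorphic, and a section of $\delta$ killed by $A''_{\v}$ means $\pa_{\zbar}$ of it stays in $\v$; combined with the Koszul–Malgrange structure this makes $\delta$ a holomorphic, hence by reality antiholomorphic, line subbundle — I'd phrase this carefully).

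For the forward direction, I would simply check \eqref{Frenet-mixed-condn} for each of the two model cases. If $\v = G^{(j)}(h) \oplus G^{(j+1)}(h)$ is a Frenet pair, then $\ker A'_{\v} = G^{(j)}(h)$ and $\delta = G^{(j+1)}(h)$; since the $\pa''$-Gauss bundle of $\v$ inside $\v^\perp$ can only land in $G^{(j-1)}(h)$, and $A''_{G^{(j+1)}(h)}$ applied to $\v$ lands back in $G^{(j)}(h) \subset \v$, we get $A''_{\v}(\delta) = 0$ — this is exactly the Frenet frame relations, and I'd cite \cite[\S 3]{burstall-wood}. If $\v = g \oplus h$ is a mixed pair with $g$ antiholomorphic and $h$ holomorphic, $h \perp g$, $G'(h) \perp g$, then $A'_{\v}$ vanishes on $g$ (as $g$ is antiholomorphic, $A'_g = 0$) and $A'_{\v}(h) = G'(h) \cap \v^\perp$... actually I need $G'(h) \perp g$ to place things correctly; the upshot is $\ker A'_{\v} = g$, $\delta = h$, and $A''_{\v}(h) = 0$ since $h$ is holomorphic hence $A''_h = 0$. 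Both cases give \eqref{Frenet-mixed-condn}.

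For the converse — which I expect to be the main obstacle — I would assume \eqref{Frenet-mixed-condn}, so $\delta$ is an antiholomorphic line subbundle and $\ker A'_{\v}$ is a (holomorphic) line subbundle $h_0$ of $\v$ with $\v = h_0 \oplus \delta$. I would then split into cases according to whether $\delta$ is itself holomorphic (equivalently, a constant-coefficient eigenline behaviour) or not, using the first return map. The key is: $A'_{\v}$ maps $\delta$ isomorphically onto $G'(\v)$, and one iterates. If $h_0$ is also antiholomorphic, I claim $\v$ is a mixed pair: take $g = \delta$ (antiholomorphic, since antiholomorphic and also — wait) — more carefully, I would show $h_0 = \ker A'_{\v}$ is holomorphic (it always is, being a kernel of a holomorphic bundle map) and $\delta$ is antiholomorphic by \eqref{Frenet-mixed-condn}, so setting $h := h_0$, $g := \delta$ gives the holomorphic/antiholomorphic pair, and the orthogonality $G'(h) \perp g$ follows because $G'(h) = A'_{\v}(h_0) = 0$ trivially since $h_0 = \ker A'_{\v}$ — hmm, that forces $G'(h)$ computed in $\CC^n$, not in $\v^\perp$; I need $A'_{h_0}$ in all of $\CC^n$, which need not vanish. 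This is the delicate point: I must distinguish $A'_{\v}$ (into $\v^\perp$) from $A'_{h_0}$ (into $h_0^\perp$). The resolution is that if $A'_{\v}(h_0) = 0$ but $A'_{h_0} \neq 0$, then $A'_{h_0}$ takes values in $\v \ominus h_0 = \delta$, forcing $\delta = G'(h_0)$, and then antiholomorphicity of $\delta$ together with holomorphicity of $h_0$ builds the Frenet pair $h_0 \oplus G'(h_0)$; iterating $A'$ and $A''$ and using \eqref{Frenet-mixed-condn} at each stage (which propagates because forward/backward replacement preserves the structure, cf. \cite[Proposition 2.3, \S 3]{burstall-wood}) shows $\v = G^{(j)}(h) \oplus G^{(j+1)}(h)$ for the full holomorphic curve $h$ generated by $h_0$. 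If instead $A'_{h_0} = 0$, i.e. $h_0$ is antiholomorphic as well, then both summands are "pure", $\delta = h$ holomorphic would contradict antiholomorphicity unless — I'd sort out that $\delta$ holomorphic and antiholomorphic forces it constant, handled as a degenerate sub-case — generically $\delta$ is antiholomorphic and $h_0$ is holomorphic but with $A'_{h_0}$ landing in $\delta$; conversely if $h_0$ antiholomorphic then we get the mixed pair directly. I would organize the converse around the dichotomy $A'_{h_0} = 0$ versus $A'_{h_0} \neq 0$, using \eqref{Frenet-mixed-condn} and its iterates to pin down the global structure, and invoke Lemma \ref{lem:bmap} / the nilpotency of the first return map only if needed to rule out infinite chains. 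The hard part is bookkeeping the $\pa'$- versus $\pa''$-reducibility at each stage and ensuring the two model descriptions exhaust all possibilities, rather than any single computation.
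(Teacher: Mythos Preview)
Your forward direction is fine; the gap is in the converse. Write $\al=\ker A'_{\v}$ and $\be=\v\ominus\al$. Your case $A'_{h_0}=0$ (i.e.\ $A'_{\al,\be}=0$) does yield a mixed pair as you outline. But in your case $A'_{h_0}\neq 0$ you implicitly assume that $A'_{G'(\v),\al}=0$, and this is exactly what you have not proved. If both $A'_{\al,\be}$ and $A'_{G'(\v),\al}$ are non-zero, then $\v$ is neither a Frenet pair (which forces $A'_{G'(\v),\al}=0$) nor a mixed pair (which forces $A'_{\al,\be}=0$). Your claim that condition \eqref{Frenet-mixed-condn} ``propagates'' under iteration is not justified: it is a hypothesis on $\v$ alone and gives no information about $A'_{G'(\v),\al}$. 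Likewise, invoking nilpotency of the first return map $\cc$ does not help directly --- here $\cc^2=0$ is automatic since $\rk\v=2$, and it does not kill the relevant composition.

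The paper closes this gap by applying the nilpotency test (Proposition~\ref{nilpotency test}) not to the first return path but to the cycle
\[
C:\ \al \longrightarrow \be \longrightarrow G'(\v) \longrightarrow \al
\]
of type $(3,2)$ in the diagram with vertices $\al,\be,G'(\v),R$ (the hypothesis \eqref{Frenet-mixed-condn} ensures there are no arrows from $G'(\v)$ or $R$ back to $\be$, so $C^j$ is the only cycle on $\al$ of type $(3j,2j)$). Finite uniton number then forces $\Cc:\al\to\al$ to be nilpotent, hence zero since $\rk\al=1$; thus one of the three edges of $C$ vanishes, which is precisely the dichotomy you wanted. This is where the finite uniton hypothesis actually enters, and your proposal does not use it in any essential way.
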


\begin{proof}   Let $\v:M \to G_2(\C^n)$ be a harmonic map of finite uniton number with $\rk G'(\v) = 1$, and suppose that
it satisfies \eqref{Frenet-mixed-condn}. We have the following  diagram from \cite{burstall-wood}
where $\al = \ker A'_{\v}$, $\be = \v \ominus \al$,
$R = \{\v \oplus G'(\v)\}^{\perp}$;
there is no arrow from $R$ or $G'(\v)$ to $\be$ by hypothesis.
We consider the cycle $C:\al \to \be \to G'(\v) \to \al$ of type $(3,2)$.   Clearly, for any $j \in \N$, any cycle on $\al$ of type $(3j,2j)$ is zero or equal to $C^j$, so by Proposition \ref{nilpotency test} with $\psi=\al$, $\Cc:\al \to \al$ is nilpotent. Since the vertices of $C$ are all of rank $1$, one edge must vanish giving a Frenet or mixed pair.

\vspace{-1.3ex}
\begin{equation*}
\begin{gathered}
\xymatrixrowsep{1.2pc}
\xymatrixcolsep{2pc}
\xymatrix{
\be \ar[dr] & & & \\
\al \ar[u] & G'(\v) \ar[l]\ar[r] & R \ar@/^1pc/[ll]}
\end{gathered}
\end{equation*}

\vspace{1.5ex}

A second approach is to use the basic diagram \eqref{diag:basic} on which we consider the cycle $\wt C$ on $\v$ of type $(3,2)$
given by the the self-arrow on $\v$
followed by the path $\v \to \v^{\perp} \to \v$.
With $\al$ as above, clearly, for any $j \in \N$, any cycle on $\v$ of type $(3j,2j)$  is zero on $\al$ or equal to $\wt C^j$, so by Proposition \ref{nilpotency test}  with $\psi=\v$, $\wt\Cc|_{\al} = \Cc:\al \to \al$ is again nilpotent.  (Note that $\wt \Cc|_{\al}$ is a bundle map as predicted by Lemma \ref{lem:bmap}.)

The converse is clear.
\end{proof}

Proposition 3.8 of \cite{burstall-wood} generalizes to show that any $\pa'$-reducible harmonic map $\v:M\to G_2(\C^n)$ of finite uniton number can be reduced to  one satisfying the condition \eqref{Frenet-mixed-condn} by a finite number of \emph{backward replacements}.
Then, using Propositions \ref{G2} and \ref{Frenet-mixed}, and reversing the orientation of $M$ (see \S \ref{subsec:Grass}), we obtain the following generalization of \cite[Theorem 3.3]{burstall-wood}:

\begin{theorem} \label{BuWo 3.3}
Let $\v:M \to G_2(\C^n)$ be a harmonic map of finite
uniton number and finite isotropy order from a Riemann surface.
Then there is a sequence of harmonic maps $\v_0,\ldots,\v_N:M \to G_2(\C^n)$ such that
\begin{enumerate}
\item[{\rm (i)}] $\v_0$ is holomorphic, a Frenet pair or a mixed pair$;$
\item[{\rm (ii)}] $\v_N = \v;$
\item[{\rm (iii)}] For each $i$, $0 \leq i < N$, there is a holomorphic subbundle $L_i$ of $\v_i$ such that $\v_{i+1}$ is obtained from $\v_i$ by forward replacement of $L_i$ or backward replacement of
$\v_i \ominus L_i$.
\end{enumerate}
\end{theorem}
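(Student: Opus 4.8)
The plan is to prove Theorem \ref{BuWo 3.3} by running the two basic moves --- forward replacement of the image of the first return map (which \emph{raises} isotropy order by one, by Proposition \ref{G2}) and backward replacement (which can \emph{lower} isotropy order or, more importantly, decrease rank) --- in reverse, starting from $\v$ and terminating at one of the three explicit models. Since $\v$ has finite uniton number and rank $2$, we have $\cc^2 = 0$ throughout; and since $\v$ has finite isotropy order $r$, the first return map $\cc = \cc_r(\v)$ is defined and $\image\cc$ is a line subbundle (completed by filling zeros) whenever $\cc \neq 0$.

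First I would dispose of the degenerate case $\cc = 0$, i.e.\ when the first return map vanishes identically: then $\v$ is already $\pa'$-isotropic through order $r+1$ or higher, contradicting finiteness of the isotropy order unless $G^{(r+1)}(\v)$ fails to be orthogonal to $\v$ --- one checks this forces $\v$ to be $\pa'$-reducible, i.e.\ $\rk G'(\v) \leq 1$. So the real content is: \textbf{(a)} if $\v$ is $\pa'$-irreducible, apply Remark \ref{rem:irr}(i): the harmonic map $\wt\v$ obtained from $\v$ by \emph{backward} replacement of $G'(\v) \ominus (\text{image of the appropriate map})$ --- equivalently, the inverse of the forward replacement of $\image\cc_{r-1}(\wt\v)$ --- has isotropy order $r - 1 < r$; \textbf{(b)} if $\v$ is $\pa'$-reducible with $\rk G'(\v) = 1$, then by (the extension of) \cite[Proposition 3.8]{burstall-wood} stated just before the theorem, a finite sequence of backward replacements brings $\v$ to a map satisfying \eqref{Frenet-mixed-condn}, which by Proposition \ref{Frenet-mixed} is a Frenet pair or a mixed pair; \textbf{(c)} if $\rk G'(\v) = 0$, then $\v$ is antiholomorphic, and after reversing the orientation of $M$ (turning backward replacements into forward ones and antiholomorphic into holomorphic) we land on a holomorphic map. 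I would combine these: by induction on $r$, case (a) reduces to the case $r$ minimal among all maps in the backward-replacement history, and then either the map has become $\pa'$-reducible, putting us in case (b) or (c), or we continue lowering $r$. Because each backward replacement either lowers the isotropy order or lowers a rank, and all these quantities are non-negative integers, the process terminates.

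The bookkeeping is the main obstacle. Concretely: one must show that the sequence of moves actually \emph{terminates} at a model map rather than cycling, and that each intermediate map remains harmonic of finite uniton number (harmonicity and finiteness are preserved by adding unitons, hence by forward and backward replacements --- this is the content of \S\S\ref{subsec:background},\ref{subsec:Grass} and Proposition \ref{S-stabilizes} together with \cite[Corollary 4.2]{APW1}). The termination argument is the delicate point: a backward replacement of $\v_i \ominus L_i$ when $\v_i$ is $\pa''$-irreducible lowers the isotropy order by exactly one (Remark \ref{rem:irr}(i), applied with orientation reversed), but if $\v_i$ is $\pa''$-reducible it instead strictly lowers $\rk\v_i \ominus L_i$ or collapses the situation; one orders the possible states lexicographically by (isotropy order, then a rank-type invariant) and checks strict decrease. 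Finally I would reverse the whole sequence: writing the terminal model as $\v_0$ and $\v = \v_N$, each step $\v_i \rightsquigarrow \v_{i+1}$ is, by Remark \ref{rem:irr}, the inverse of the backward replacement used, i.e.\ a \emph{forward} replacement of a holomorphic line subbundle $L_i \subseteq \v_i$ (or, in the $\pa'$-reducible sub-steps coming from \cite[Proposition 3.8]{burstall-wood}, a backward replacement of $\v_i \ominus L_i$), giving exactly conclusion (iii). Properties (i) and (ii) are then immediate from the construction, and the only genuinely new input over \cite[Theorem 3.3]{burstall-wood} --- the nilpotency of $\DD_1 = \cc \circ \ee|_\al$ on an arbitrary Riemann surface --- has already been supplied by Proposition \ref{ce} via Proposition \ref{G2}.
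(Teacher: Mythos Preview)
Your case (a) runs in the wrong direction, and this breaks the argument. You propose to \emph{lower} the isotropy order by a backward replacement and induct on $r$, but that induction has no useful floor: the isotropy order is always at least $1$, and a $\pa'$-irreducible harmonic map of isotropy order $1$ into $G_2(\C^n)$ is in general none of the three model maps. The bijection of Remark~\ref{rem:irr}(i) only shuttles you between irreducible maps of adjacent isotropy orders; you never leave the irreducible class by decreasing $r$. Moreover, your description of the move in (a) is circular: you define the backward replacement as the inverse of ``forward replacement of $\image\cc_{r-1}(\wt\v)$'', but $\wt\v$ is the map you are trying to construct.

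The paper does the opposite. Starting from $\v$, it applies Proposition~\ref{G2} repeatedly to \emph{raise} the isotropy order by forward-replacing $\image\cc$. This terminates for dimension reasons: while the map remains $\pa'$-irreducible the mutually orthogonal rank-$2$ bundles $\v, G'(\v),\ldots,G^{(r)}(\v)$ sit inside $\C^n$, so $r$ is bounded and the process must reach a $\pa'$-reducible map. Then the extension of \cite[Proposition~3.8]{burstall-wood} (a finite string of backward replacements) together with Proposition~\ref{Frenet-mixed} reduces that map to a Frenet pair or a mixed pair; reversing the orientation of $M$ handles the antiholomorphic case and gives the holomorphic alternative in (i). Reversing the whole chain then yields $\v_0,\ldots,\v_N$ as stated. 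Your parts (b) and (c) match this reducible endgame, and your closing remarks about reversing the sequence are correct in spirit; it is only the mechanism for \emph{reaching} a reducible map that is wrong.

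A minor side point: your ``degenerate case $\cc=0$'' cannot occur. When the isotropy order is exactly $r$, the composition $\v\to G'(\v)\to\cdots\to G^{(r)}(\v)$ is surjective by construction of the Gauss bundles, and $A'_{G^{(r)}(\v),\v}\neq 0$ by the very definition of isotropy order, so $\cc\neq 0$.
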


Note that, for strongly isotropic harmonic maps, a similar result was already given in \cite[Theorem 3.9]{burstall-wood}; in that case there are no cycles to consider so that no nilpotency results are required.

The above method is adapted to studying harmonic maps from $S^2$ to $G_2(\R^n)$ (or the complex quadric $Q_{n-2}$) in \cite{bahy-wood-G2}.  These results only use the nilpotency of  $\cc \circ \ee$ and so extend as follows:  A \emph{real mixed pair} is a map $\v:M \to G_2(\R^n)$ of the form $h \oplus \ov{h}$ where $h:M \to Q_{n-2} = \bigl\{[Z] = [Z_1,\ldots,Z_n] \in \CP^{n-1}: Z_1^{\,2} + \cdots + Z_n^{\,2} = 0 \bigr\}$ is holomorphic;  such a map $\v$ is always harmonic;  note that there are no real Frenet pairs
\cite[Proposition 5.10]{bahy-wood-G2}.  If we alternate forward and backward replacement, we can keep $\v_i$ in $G_2(\R^n)$  for $i$ even leading to the following result:

\begin{theorem} \label{bahy-wood-G2}
Let $\v:M \to G_2(\R^n)$ be a harmonic map of finite
uniton number and finite isotropy order from a Riemann surface.
Then there is a sequence of harmonic maps $\v_0,\ldots,\v_N:M \to G_2(\C^n)$ such that
\begin{enumerate}
\item[{\rm (i)}] $\v_0$ is a real mixed pair$;$
\item[{\rm (ii)}] $\v_N = \v;$
\item[{\rm (iii)}] $\v_{i+1}$ is obtained from $\v_i$ by forward or backward replacement  of a line subbundle according as $i$ is even or odd.
\end{enumerate}
\end{theorem}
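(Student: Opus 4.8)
The plan is to carry out the reduction of Theorem \ref{BuWo 3.3} in a way that respects the reality of $\v$, so that the even-indexed maps of the resulting sequence stay in $G_2(\R^n)$. I would first record the conjugation symmetry available here. Since $\v$ takes values in $G_2(\R^n) \subseteq G_2(\C^n)$, fibrewise complex conjugation $s \mapsto \bar s$ on $\CC^n$ fixes $\v$, i.e.\ $\overline{\v} = \v$ (hence also $\overline{\v^{\perp}} = \v^{\perp}$). This antilinear bundle automorphism carries the Koszul--Malgrange holomorphic structure of $\v$ to that of $\overline{\v} = \v$ with the orientation of $M$ reversed; consequently $G^{(-i)}(\v) = \overline{G^{(i)}(\v)}$ for all $i \in \Z$, $A''_{\v} = \overline{A'_{\v}}$, and $\rk G'(\v) = \rk G''(\v)$, so $\v$ is $\pa'$-irreducible if and only if it is $\pa''$-irreducible. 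More generally, conjugation interchanges forward replacement of a holomorphic subbundle $\be$ for $\v$ with backward replacement of the antiholomorphic subbundle $\overline{\be}$ for $\overline{\v} = \v$ (up to the orientation reversal of \S \ref{subsec:Grass}), and it carries the first return map $\cc_r(\v)$ to the $\pa''$-first return map, and $\image \cc_r(\v)$, $\image \ee(\v)$ to their $A''$-analogues. Finally, a real holomorphic map into $G_2$ is constant, and there are no real Frenet pairs \cite[Proposition 5.10]{bahy-wood-G2}, so the only basic type available is a real mixed pair $h \oplus \overline{h}$ with $h:M \to Q_{n-2}$ holomorphic.

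The reduction itself follows the proof of Theorem \ref{BuWo 3.3}, but performed in \emph{conjugate pairs} of replacements. Whenever the current map $\v_{2j}$ is real and $\pa'$-irreducible of finite isotropy order $r$, Proposition \ref{G2} --- valid over an arbitrary Riemann surface precisely because $\cc \circ \ee$ is nilpotent, Proposition \ref{ce} --- together with Remark \ref{rem:irr} lets us pass, by a single forward (or backward) replacement of a line subbundle, to a harmonic map $\v_{2j+1}:M \to G_2(\C^n)$; I then apply the conjugate replacement --- the backward (resp.\ forward) replacement of the line conjugate to the one inverting the previous step --- to obtain $\v_{2j+2}$. By the equivariance above, $\overline{\v_{2j+2}} = \v_{2j+2}$, so $\v_{2j+2}$ again takes values in $G_2(\R^n)$, and the isotropy order has changed by an even amount. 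Since the isotropy order of a map into $G_2(\C^n)$ is bounded, after finitely many such double steps the (real) map becomes $\pa'$-reducible; at that point the generalisation of \cite[Proposition 3.8]{burstall-wood} quoted before Theorem \ref{BuWo 3.3} reduces it, again in conjugate pairs of backward and forward replacements, to a real map satisfying \eqref{Frenet-mixed-condn}, which by Proposition \ref{Frenet-mixed} is a Frenet or a mixed pair; since real Frenet pairs do not exist, the process terminates at a real mixed pair $\v_0$. Reversing the whole construction (and relabelling if needed so that the first replacement is forward) yields the required sequence $\v_0, \ldots, \v_N = \v$, with forward replacements at even steps and backward replacements at odd steps and with the even-indexed maps in $G_2(\R^n)$.

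The step I expect to be the main obstacle is the conjugation bookkeeping in the second paragraph: one must verify that, starting from a real map, a forward replacement of a line can always be completed by a backward replacement of a line so that the composite is conjugation-invariant (hence lands back in $G_2(\R^n)$), and that the intermediate non-real maps still have the exact isotropy orders needed to re-apply Proposition \ref{G2}, its $\pa''$-analogue, and the reducible-case machinery. This amounts to tracking, under conjugation, the distinguished line subbundles involved --- $\image \cc_r(\v)$, the line inverting a forward replacement (Remark \ref{rem:irr}), and the bundles occurring in \cite[Proposition 3.8]{burstall-wood} --- together with how forward and backward replacements of lines shift the isotropy order of a $G_2$-map, exactly as in \cite[\S 3 and \S 5]{bahy-wood-G2}. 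Once Proposition \ref{ce} makes Proposition \ref{G2} available over a general $M$, this bookkeeping is routine and the $S^2$ argument of \cite{bahy-wood-G2} transfers essentially verbatim.
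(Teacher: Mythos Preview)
Your proposal is correct and follows the same approach as the paper, which in fact gives even less detail: the paper simply observes that the argument of \cite{bahy-wood-G2} goes through verbatim once nilpotency of $\cc\circ\ee$ (Proposition~\ref{ce}, hence Proposition~\ref{G2}) is available for finite-uniton-number maps on an arbitrary Riemann surface, and that alternating forward and backward replacements keeps $\v_i$ real for $i$ even; it then refers to \cite[Theorem~4.7]{bahy-wood-G2} for the precise statement. Your identification of the conjugation equivariance, the absence of real Frenet pairs, and the role of Proposition~\ref{ce} is exactly what is needed. One small imprecision: your recipe ``backward replace the line conjugate to the one inverting the previous step'' does not literally name a subbundle of $\v_{2j+1}$ (the conjugate of $A'_{\v_{2j}}(L)$ lies in $G''(\v_{2j})$, not in $\v_{2j+1}$); the actual pair of replacements in \cite[\S\S3--4]{bahy-wood-G2} is a forward replacement of $\image\cc_r$ followed by a backward replacement of the image of the $\pa''$-first-return map of the intermediate map, and the reality of the composite is checked directly rather than by a formal conjugation argument. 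You correctly flag this as the bookkeeping that has to be imported from \cite{bahy-wood-G2}, and it is indeed routine.
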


See \cite[Theorem 4.7]{bahy-wood-G2} for a more precise result  stated for $S^2$ but which generalizes immediately to finite uniton maps from any Riemann surface. This has been used for finding constant curvature minimal $2$-spheres in $Q_{n-2}$, for example, see \cite{JiaoLi Qn, JiaoLi Q5,LiHeJiaoQ3,PengWangXu}.  For generalizations of some of that work to harmonic maps of finite uniton number from arbitrary Riemann surfaces see \S \ref{subsec:const-curv} below.

There is an analogous result for  harmonic maps from $S^2$ into quaternionic projective space \cite[Theorem (4.7)]{bahy-wood-HPn} which equally well extends to finite uniton maps from a Riemann surface.  This likewise gives results on constant curvature and homogeneous minimal  $2$-spheres in $\HP^n$, e.g. \cite{FeiHe,HeJiao2014,HeJiao}; we will discuss some generalizations to other Riemann surfaces elsewhere.

\subsection{The cycle $c \circ e^s$}

To make further progress we generalize Proposition \ref{ce} to give a sequence of nilpotent cycles as follows.

\begin{proposition} \label{ce^s}
Let $\v:M \to G_k(\C^n)$ be a harmonic map of finite uniton number and finite isotropy order, with first return map $\cc$ of nilorder 2.
Suppose that $\al$ is any subbundle of $\v$  with $\image \cc  \subseteq \al \subseteq \ker \cc $.  For $s=1,2,\ldots$, let $\DD_s$ be the bundle map given by $\cc\circ \ee^s|_{\al}:\alpha\to \alpha$.  Suppose that for some $s \in \N$,
\begin{equation} \label{ce^t=0}
 \DD_t=0\, \text{ on } \al, \text{ for } 1 \leq t < s.
\end{equation}
Then $\DD_s:\al \to \al$ is nilpotent.
\end{proposition}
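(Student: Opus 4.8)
The plan is to mimic the proof of Proposition \ref{ce}, replacing the cycle $c\circ e$ by $D_s:=c\circ e^s$ and feeding it into Lemma \ref{uniqueness test} and Proposition \ref{nilpotency test}. First I would record that, in the diagram \eqref{diag:second return}, $D_s$ is a cycle on $\v$ of type $(\ell,m)$ with $\ell=(s+1)r+2s+1=(s+1)(r+2)-1$ and $m=2(s+1)$, and that, since $\image\cc\subseteq\al$, its associated operator restricts to a bundle map $\DD_s=\cc\circ\ee^s|_\al:\al\to\al$; likewise $\cc\circ\ee^s$ carries $\al$ into $\al$, so $(c\circ e^s)^j$ restricts on $\al$ to $\DD_s^{\,j}$. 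With $\psi=\v$, $C=D_s$ and these $(\ell,m)$, Proposition \ref{nilpotency test} gives the nilpotency of $\DD_s$ once the two hypotheses of Lemma \ref{uniqueness test} are checked: {\rm (i)} every non-zero cycle on $\v$ of degree $m$ has length at least $\ell$; {\rm (ii)} every cycle on $\v$ of type $(\ell,m)$ is zero on $\al$ or equals $D_s$. For $s=1$ the extra hypothesis \eqref{ce^t=0} is vacuous and the argument reduces to that of Proposition \ref{ce}, so the point is to see how \eqref{ce^t=0} enters for $s\geq2$.

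For both conditions I would decompose a degree-$m$ cycle $B$ on $\v$ as a composition $B=C_1\circ\cdots\circ C_{s+1}$ of degree-$2$ cycles on $\v$ (the degree is even, and each excursion of $B$ from the vertex $\v$ back to $\v$ has degree exactly $2$, since in \eqref{diag:second return} the only arrow out of $\v$ is external). Each $C_i$ has length $\geq r+1$, with equality iff $C_i=c$, since the first return path is the shortest external cycle on $\v$ in \eqref{diag:second return}; and a $C_i$ of length $r+2$ is, by Lemma \ref{epsilon}, one of $e,\ u\circ c,\ c\circ u,\ \widehat c$. If $a$ of the $C_i$ equal $c$ then the length of $B$ is at least $(s+1)(r+2)-a$. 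For {\rm (i)}, a non-zero $B$ of degree $m$ with length $<\ell$ would force $a\geq2$; here I would use $\cc^2=0$ together with the inductive hypothesis \eqref{ce^t=0} in the form $\cc\circ\ee^t\circ\cc=\DD_t\circ\cc=0$ for $0\leq t\leq s-1$ (valid since $\image\cc\subseteq\al$ and $\DD_t=0$ on $\al$), and the facts that $u\circ c$ sends $\ker\cc\supseteq\al$ to zero once it is fed a section of $\image\cc$, while $c\circ u$ and $\widehat c$ have image in $\image\cc\subseteq\ker\cc$, to collapse the block of factors lying between two consecutive copies of $c$ and conclude $\mathbf B=0$; hence a non-zero degree-$m$ cycle has $a\leq1$ and length $\geq\ell$. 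The same length count shows that a cycle of type $(\ell,m)$ which is non-zero on $\al$ has exactly one factor equal to $c$ and $s$ further factors of length $r+2$.

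For {\rm (ii)} it then remains to show that the only such $B$ non-zero on $\al$ is $c\circ e^s=D_s$. I would run through the position of the unique factor $c$ and the choices for the remaining factors from $\{e,u\circ c,c\circ u,\widehat c\}$: any factor $u\circ c$, $c\circ u$ or $\widehat c$, and any maximal $e$-block sandwiched between two applications of $\cc$, kills $\mathbf B|_\al$ by the same identities ($\cc|_\al=0$; $\cc\circ\ee^t\circ\cc=\DD_t\circ\cc=0$ for $1\leq t\leq s-1$; $\cc$ composed on the left with an operator with image in $\image\cc$ is zero; and $u\circ c$ kills sections of $\ker\cc$), so the only surviving arrangement is $c$ followed by $e^s$, with operator $\DD_s$ on $\al$. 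Then Lemma \ref{uniqueness test} gives that for each $j$ any cycle on $\v$ of type $(j\ell,jm)$ is zero on $\al$ or equals $D_s^{\,j}$, and Proposition \ref{nilpotency test} yields that $\DD_s$ is nilpotent.

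I expect the main obstacle to be precisely this bookkeeping in {\rm (i)} and {\rm (ii)}: controlling the finitely many types of degree-$2$ cycle ($c$, $e$, $u\circ c$, $c\circ u$, $\widehat c$, and longer ones carrying extra self-arrows) under composition, and organizing the repeated use of $\cc^2=0$ and of $\DD_t=0$ $(t<s)$ so that $D_s$ is genuinely the unique cycle of type $(\ell,m)$ that is non-zero on $\al$. As in Proposition \ref{ce}, I would keep the refined diagram \eqref{diag:second return2} in view, where $\DD_s$ appears as the single composition running out along the $\al_i$, crossing $R$, returning along the $\be_i$, and looping $s$ times through the $e$-circuit; this makes the vanishing of the competing paths transparent. (On $S^2$ the nilpotency of $D_s$ also follows, as in \cite{burstall-wood}, from holomorphicity of this cycle and the vanishing of holomorphic differentials; Proposition \ref{ce^s} is the extension to an arbitrary Riemann surface provided by the finiteness criterion.)
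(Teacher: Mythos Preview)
Your approach is the same as the paper's: set up $D_s=c\circ e^s$ as a cycle of type $(\ell_{r,s},2s+2)$ with $\ell_{r,s}=(r+1)+s(r+2)$, verify hypotheses (i) and (ii) of Lemma \ref{uniqueness test}, and conclude via Proposition \ref{nilpotency test}. The decomposition of a degree-$(2s+2)$ cycle into $s+1$ degree-$2$ factors, the count of factors equal to $c$, and the use of Lemma \ref{epsilon} together with $\cc^2=0$ and the identities $\cc\circ\ee^t|_\al=0$ for $t<s$ are exactly the ingredients the paper uses.

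The one organisational difference is that the paper runs an explicit \emph{induction on $s$}, taking as induction hypothesis not just \eqref{ce^t=0} but the full statements (i) and (ii) for all $t<s$. This lets the paper dispose of the block between two consecutive $c$'s in one stroke: if that block has all factors of length exactly $r+2$, then by the induction hypothesis (ii) the corresponding sub-cycle $c\circ\ep_t\circ\cdots\circ\ep_1$ is either zero on $\al$ or equals $c\circ e^t$, and the latter vanishes on $\al$ by \eqref{ce^t=0}; otherwise some factor has length $>r+2$, which already gives enough length. Your direct argument via the identities $\cc\circ\ee^t\circ\cc=0$ is in the same spirit but leaves more bookkeeping implicit.

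One point to tighten: your conclusion ``hence a non-zero degree-$m$ cycle has $a\leq1$'' is stated too strongly. A cycle with $a\geq2$ can be non-zero provided the non-$c$ factors carry enough extra self-arrows to push the total length up to $\ell$ or beyond (e.g.\ $c\circ(e\circ u)\circ c$ when $s=2$). What you actually need, and what your argument gives, is the weaker statement ``if $a\geq2$ and the length is $<\ell$, then $\mathbf B=0$''; equivalently, (i) itself. The paper's inductive framing makes this distinction automatic, since it never claims $a\leq1$ but instead bounds $\ell(C_j)$ from below for each block $C_j$.
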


\begin{proof}
Let $r$ be the isotropy order of $\v$, and  consider the diagram \eqref{diag:second return}.
For $s=1,2,\ldots,$ let $D_s$ be the cycle on $\v$ given by $c \circ e^s$;
 then $D_s$ has type $(\ell_{r,s}, 2s+2)$ where $\ell_{r,s} := (r+1)+s(r+2)$.  We show by induction on $s$ that $D_s$ satisfies the conditions of Lemma \ref{uniqueness test}, namely that
(i) any  non-zero  cycle on $\v$ of degree $2s+2$ has length at least $\ell_{r,s}$;
(ii)  any  cycle on $\v$ of type
$(\ell_{r,s}, 2s+2)$ is zero on $\al$ or is equal to $D_s$.
The result then follows from Proposition \ref{nilpotency test}
(with $\psi=\v$).

As base of the induction, note that conditions (i) and (ii) hold for $s=1$ where $D_1 = c \circ e$,  as in the proof of Proposition \ref{ce}.

As induction hypothesis, suppose that
for some $s > 1$, (i) and (ii)  hold for all
$1 \leq t < s$. Further, suppose that \eqref{ce^t=0} holds.
We show that (i) and (ii) hold for $s$:

(i) Any non-zero  cycle $B$  on $\v$ of degree $2s+2$ is the composition of $s+1$  non-zero  cycles of degree $2$; each of these is $c$ or (*): \emph{a non-zero  cycle $\ep$ on $\v$  of degree $2$ and of length at least $r+2$}.
If $B$ contains exactly one $c$, then (i)  obviously holds because $c$ has length $r+1$ and
all other non-zero external cycles on $\v$ are longer.
Suppose now that $B$ contains two or more $c$'s.  Then $B$ is of the form
$K_2 \circ C_k \circ \cdots \circ C_1 \circ c \circ K_1$ for some $k \geq 1$, where the $K_j$ are  empty or are compositions of  cycles $\ep$ as described in (*), and each $C_j$ is of the form $c \circ \ep_t \circ \cdots \circ \ep_1$ for some $1 \leq t < s$
for some $\ep_i$ as described in (*); note that, for all $j$, $C_j \circ c$ is a subpath of $B$.

  For any particular $j$,
if all $\ep$ in $C_j$ have length exactly $r+2$, then
$C_j|_{\al}$ has length $\ell_{r,t}$.  By the induction hypothesis (ii), $C_j$  is zero on $\al$ or equal to $c \circ e^t$.  However  the latter is zero on $\al$ by \eqref{ce^t=0};
since $\image \cc \subseteq \al$,  this means that  $C_j \circ c$,  and so $B$,  is zero  on $\al$.

Hence,  one of the $\ep_i$ in $C_j$ must have length more than $r+2$ so that,  $\ell(C_j) \geq (t+1)(r+2)$.  As this holds for any $C_j$, and there is an extra $c$ in the expression for $B$, (i) holds.

(ii) Now, with notation as in (i), let $B=K_2 \circ C_k \circ \cdots \circ C_1 \circ c \circ K_1$ be a cycle on $\v$ of type $(\ell_{r,s}, 2s+2)$ which is non-zero on $\al$; note that $k$ might be zero, i.e., there may be no $C_i$'s.
  Observe that, since $\ell(B)=\ell_{r,s} =(r+1)+s(r+2)$,  $K_1$ is empty or is a composition $K_1= \ep_t \circ \cdots \circ \ep_1$ of cycles  $\ep_i$  on $\v$ of type $(r+2,2)$, with $t\leq s$. We now show that all $\ep_i$ must equal $e$ and $t=s$, implying that $B=c\circ e^s=D_s$.

Each $\ep_i$ is given by Lemma \ref{epsilon}.  By the induction hypothesis and \eqref{ce^t=0}, for all $i=1,\ldots, t$,
the composition $\cc \circ \epp_t \circ \cdots \circ \epp_{i+1}$ vanishes on $\al$, and so
on $\image \cc$.
Hence, if any $\ep_i$ is $c \circ u$ or $\widehat c$, then $K_1$, and so $B$, are zero on $\al$.
So each $\ep_i$ is $u \circ c$ or $e$.  Suppose that
$\ep_1 = u \circ c$; then, since $\al \subseteq \ker \cc$, $B$ is zero on $\al$, so $\ep_1 = e$.  Suppose next that
$\ep_2 = u \circ c$; then $B$ starts with $c \circ e$ which is zero on $\al$ by hypothesis \eqref{ce^t=0}.  Hence $\ep_2 = e$.

Similarly all $\ep_i$ must equal $e$, so that $K_1= e^t$. But since, by the induction hypothesis, $\cc\circ \ee^t=0$ on $\al$ for $1\leq t<s$, we must have $t=s$ and $K_1=e^s$; so $B =c\circ e^s$ as required.
\end{proof}

We can apply the above nilpotent cycle to extend Proposition \ref{G2} to other Grassmannians as follows.  We need some preliminaries; the next two lemmas do not require finite uniton number.

\begin{lemma} \label{e holo}
Let $\v:M \to G_k(\C^n)$ be harmonic of finite isotropy order, with first return map $\cc$ of nilorder $2$.
Let $\al$ be a holomorphic subbundle of $\v$ with
\begin{equation} \label{al condn}
\image \cc \subseteq \al \subseteq \ker \cc
\end{equation}
and set $\be = \v \ominus \al$.  Then $\pi_{\be} \circ \ee: \al \to \be$ is holomorphic.
\end{lemma}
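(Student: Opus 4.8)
The plan is to realize $\pi_{\be}\circ\ee|_{\al}$ as the operator attached to the path $\al\to\al_1\to\cdots\to\al_r\to R\to\be$ in the refined diagram \eqref{diag:second return2} (this is case (1) of the list in Example \ref{ex:c2=0}, post-composed with $\pi_{\be}$) and to show that each arrow along it is holomorphic, so that the composite is. This is exactly the first half of the path \eqref{D1}, whose holomorphicity was established on p.~276 of \cite{burstall-wood}; we simply truncate that argument at $\be$, and in particular no finiteness of the uniton number is needed. The first step is to fix the Koszul--Malgrange structures: since $A^{\v}_{\zbar}$ is off-diagonal with respect to $\v\oplus\v^{\perp}$, the holomorphic structure induced on $\v$, and (via the harmonic maps $G^{(i)}(\v)$) on each Gauss bundle $G^{(i)}(\v)$, is the tautological one $\pi\circ\pa_{\zbar}$; since $\al$ is holomorphic in $\v$, it follows that $\be=\v\ominus\al$ carries a well-defined holomorphic structure, namely that of the quotient $\v/\al$ (equivalently the tautological one), and that $\pi_{\be}:\v\to\be$ is holomorphic.

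Next I would dispatch the ``inner'' arrows. By construction $\al_0=\al$ and $\al_{i}=\image(A'_{G^{(i-1)}(\v)}|_{\al_{i-1}})$, so inductively each $\al_i$ is the image of a holomorphic bundle map out of the holomorphic subbundle $\al_{i-1}$ of the harmonic bundle $G^{(i-1)}(\v)$, hence is itself a holomorphic subbundle of $G^{(i)}(\v)$. Consequently the arrows $A'_{\v}|_{\al}:\al\to\al_1$ and $A'_{G^{(i)}(\v)}|_{\al_i}:\al_i\to\al_{i+1}$ for $1\le i\le r-1$ are restrictions and corestrictions of the holomorphic second fundamental forms of the harmonic maps $\v,G^{(1)}(\v),\ldots,G^{(r-1)}(\v)$, and are therefore holomorphic.

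The remaining, and main, step is the passage $\al_r\to R\to\be$, i.e.\ the holomorphicity of $A'_{R,\be}\circ A'_{G^{(r)}(\v),R}|_{\al_r}$. This is genuinely the crux, because $R$ need \emph{not} be a holomorphic subbundle of $(\CC^n,D^{\v}_{\zbar})$ (as $A'_{G^{(r)}(\v),R}\neq 0$ in general), so one cannot simply restrict a known holomorphic map; instead one must invoke the holomorphicity criterion \cite[Proposition 1.5]{burstall-wood}, exactly as in the treatment of \eqref{D1} in \cite{burstall-wood}. I would verify its hypotheses directly from the shape of diagram \eqref{diag:second return2}: the hypothesis $\image\cc\subseteq\al\subseteq\ker\cc$ forces the return-map arrow out of $\al_r$ to vanish (so the only proper arrows out of $\al_r$ go to $\be_r$ and to $R$), and together with holomorphicity of $\al$ it provides the compatibility of the quotient structure on $\be$ that \cite[Proposition 1.5]{burstall-wood} requires. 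Composing the holomorphic arrows then yields that $\pi_{\be}\circ\ee|_{\al}$ is holomorphic. I expect the only real difficulty to be this last point --- matching the vanishing second fundamental forms in \eqref{diag:second return2} to the hypotheses of \cite[Proposition 1.5]{burstall-wood} --- since everything else is the standard dictionary between second fundamental forms, Gauss bundles, and Koszul--Malgrange structures.
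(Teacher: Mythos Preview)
Your proposal is correct and follows essentially the same approach as the paper: identify $\pi_{\be}\circ\ee|_{\al}$ with the composition along the path $\al\to\al_1\to\cdots\to\al_r\to R\to\be$ in diagram \eqref{diag:second return2}, and invoke \cite[Proposition 1.5]{burstall-wood} for holomorphicity of each arrow. The paper's proof is a terse two-line version of exactly this; your expansion---separating the ``inner'' arrows (where holomorphicity is immediate from the harmonic sequence) from the passage through $R$ (where $R$ is not itself holomorphic and \cite[Proposition 1.5]{burstall-wood} is genuinely needed)---is accurate and a reasonable unpacking of what the paper leaves implicit.
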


\begin{proof}
With notation as in diagram \eqref{diag:second return2},
$\pi_{\be} \circ \ee: \al \to \be$ is the composition of second fundamental forms $\al \to \al_1 \to \ldots \to \al_r \to R \to \be$.  These are all holomorphic by \cite[Proposition 1.5]{burstall-wood}.
\end{proof}

Given a holomorphic subbundle $\al$ of $\v$ satisfying \eqref{al condn}, for $t=0,1,\ldots$ \
set $E^t_{\al} = \spa\{\ee^i(\al): i =0,1,\ldots,t\}$
so that $E^0_{\al} = \al$.

\begin{lemma} \label{E}
 Assume the same hypotheses as in Lemma \ref{e holo}.
\begin{enumerate}
\item[{\rm (i)}] Suppose that $E^{t-1}_{\al}$ is a holomorphic subbundle of\/ $\ker \cc$  for some $t \geq 1$.  Then $E^t_{\al}$ is a holomorphic subbundle of $\v$.
\item[{\rm (ii)}] Suppose that $E^t_{\al}$ is a holomorphic subbundle of $\ker \cc$ for all $t \in \{0,1,\ldots\}$.  Then
$E^{\infty}_{\al} := \bigcup_{t=0}^{\infty}E^t_{\al} = \spa\{\ee^i(\al): i =0,1,\ldots\}$ is a holomorphic subbundle of\/ $\ker \cc$ and so of $\v$.
\end{enumerate}
\end{lemma}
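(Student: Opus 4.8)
The plan is this: part (i) follows directly from Lemma \ref{e holo} applied to $E^{t-1}_{\al}$ in place of $\al$, combined with the standard fact that a sum of holomorphic subbundles with zero pointwise intersection is again a holomorphic subbundle; part (ii) then follows at once because an increasing chain of subbundles of a finite-rank bundle must eventually stabilize.

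For (i), the key identity I would use is
\[
E^t_{\al} \;=\; E^{t-1}_{\al} + \ee\bigl(E^{t-1}_{\al}\bigr) \;=\; E^{t-1}_{\al} \oplus \pi_{\be'}\!\bigl(\ee(E^{t-1}_{\al})\bigr), \qquad \be' := \v \ominus E^{t-1}_{\al}.
\]
The first equality is immediate from the definition $E^{j}_{\al} = \spa\{\ee^i(\al): 0 \le i \le j\}$, once one notes that $\ee$ maps $\v$, and hence $E^{t-1}_{\al}$, into $\v$; the second follows by decomposing sections of $\ee(E^{t-1}_{\al})$ into their $E^{t-1}_{\al}$- and $\be'$-components. (As usual these statements are to be read on the dense open subset of $M$ where the relevant ranks are maximal, and the bundles are then extended over $M$ by filling out zeros, cf.\ \cite[Proposition 2.2]{burstall-wood}.) Since $\al = E^0_{\al} \subseteq E^{t-1}_{\al}$, the bundle $E^{t-1}_{\al}$ satisfies $\image\cc \subseteq E^{t-1}_{\al} \subseteq \ker\cc$, and by the hypothesis of (i) it is a holomorphic subbundle of $\v$; hence Lemma \ref{e holo} applies with $E^{t-1}_{\al}$ in place of $\al$ and gives that $\pi_{\be'}\circ\ee : E^{t-1}_{\al} \to \be'$ is holomorphic. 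Its image, completed by filling out zeros, is then a holomorphic subbundle of $\be'$, and so of $\v$. Thus $E^t_{\al}$ is the direct sum of the two holomorphic subbundles $E^{t-1}_{\al}$ and $\pi_{\be'}(\ee(E^{t-1}_{\al}))$ of $\v$, whose intersection is zero at every point of $M$; choosing local holomorphic frames of the summands shows that $D^{\v}_{\zbar}$ preserves sections of the sum, so $E^t_{\al}$ is a holomorphic subbundle of $\v$, proving (i).

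For (ii), the $E^t_{\al}$ form an increasing chain $E^0_{\al} \subseteq E^1_{\al} \subseteq \cdots$ of subbundles of the finite-rank bundle $\v$, so their ranks are eventually constant and $E^t_{\al} = E^T_{\al}$ for some $T$ and all $t \ge T$; consequently $E^{\infty}_{\al} = E^T_{\al}$, which by hypothesis is a holomorphic subbundle of $\ker\cc$, and therefore of $\v$.

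The one place calling for real care is the bookkeeping around filling out zeros: one should confirm that the subbundles occurring are of constant rank off a discrete set, and that both the inclusion $E^{t-1}_{\al} \subseteq \ker\cc$ and the direct-sum decomposition above survive the completion over all of $M$, so that Lemma \ref{e holo} genuinely applies as stated. The remaining ingredients --- that a pointwise-transverse sum of holomorphic subbundles is holomorphic, and that an increasing chain of subbundles of a finite-rank bundle stabilizes --- are routine.
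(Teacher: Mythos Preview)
Your proof is correct and follows essentially the same route as the paper's: apply Lemma~\ref{e holo} with $E^{t-1}_{\al}$ in place of $\al$ (noting $\image\cc \subseteq \al \subseteq E^{t-1}_{\al} \subseteq \ker\cc$) to see that $\pi_{\be'}\circ\ee$ is holomorphic, then write $E^t_{\al}$ as the direct sum $E^{t-1}_{\al}\oplus\image(\pi_{\be'}\circ\ee)$; for (ii) both you and the paper use the finite-rank stabilization. Your write-up is slightly more explicit about why a transverse sum of holomorphic subbundles is holomorphic and about the filling-out-zeros caveat, but the argument is the same.
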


\begin{proof}
(i)  Since $\image \cc \subseteq E^{t-1}_{\al} \subseteq \ker \cc$,
 we get a diagram like \eqref{diag:second return2},
 with $\al$ replaced by $E^{t-1}_{\al}$ and $\be$ by $\v \ominus E^{t-1}_{\al}$, to which we can apply Lemma \ref{e holo}.   By that lemma,
$\widehat{\ee} := \pi_{\v \ominus E^{t-1}_{\al}} \circ \ee:E^{t-1}_{\al} \to \v \ominus E^{t-1}_{\al}$ is holomorphic, so after filling out zeros, its image, $\image{\widehat{\ee}}$, is a holomorphic subbundle of $\v \ominus E^{t-1}_{\al}$.  It easily follows that $E^t_{\al} =  E^{t-1}_{\al} \oplus \image{\widehat{\ee}}$ is a holomorphic subbundle of $\v$.

(ii) We have $E^{\infty}_{\al} = E^t_{\al}$ for some $t$, so the result follows from part (i).
\end{proof}

To apply this in the sequel, note that $\cc:\v \ominus \ker \cc \to \image \cc$ is holomorphic and it follows from linear algebra that it is an isomorphism on each fibre except for those fibres at the isolated points where $\rk\cc$ drops.

\begin{proposition} \label{Gk nil 2 rk 1}
Let $\v:M \to G_k(\C^n)$ be a harmonic  map of finite uniton number and finite isotropy order, with first return map $\cc$ of nilorder $2$ and rank $1$.
Then there exists a holomorphic subbundle $\al$ of $\v$ with
$\image \cc \subseteq \al \subseteq \ker \cc$ such that forward replacing $\al$ increases the isotropy order of $\v$.
\end{proposition}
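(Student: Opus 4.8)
### Proof proposal for Proposition \ref{Gk nil 2 rk 1}

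The plan is to build the desired holomorphic subbundle $\al$ by starting from a natural candidate and enlarging it using the second return map $\ee$, exactly so that the sequence of nilpotent cycles from Proposition \ref{ce^s} forces the single arrow in the resulting diagram that needs to vanish.

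First I would set up the candidate. Since $\cc$ has rank $1$ and nilorder $2$, we have $\image \cc \subseteq \ker \cc$, and $\image \cc$ is a holomorphic line subbundle (after filling out zeros, using that $\cc = A'_{G^{(r)}(\v),\v}\circ\cdots\circ A'_{\v}$ is holomorphic). The naive choice $\al = \image \cc$ is holomorphic and satisfies \eqref{al condn}, but forward replacing it need not increase the isotropy order because the operators $\DD_s = \cc\circ\ee^s|_{\al}$ from Proposition \ref{ce^s} may fail to vanish. So instead I would enlarge: set $\al := E^{\infty}_{\image\cc} = \spa\{\ee^i(\image\cc): i\geq 0\}$, provided this stays inside $\ker\cc$. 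The key point is Lemma \ref{E}(ii): if each $E^t_{\image\cc}$ lies in $\ker\cc$, then $E^{\infty}_{\image\cc}$ is a holomorphic subbundle of $\ker\cc$, hence also contains $\image\cc$, so it satisfies \eqref{al condn}; and by construction $\ee(\al)\subseteq\al$, so $\ee$ restricts to a bundle map on $\al$. Then for this $\al$, the condition \eqref{ce^t=0} in Proposition \ref{ce^s} holds vacuously — indeed $\cc\circ\ee^s|_\al$ maps into $\image\cc\subseteq\al\subseteq\ker\cc$ so... — actually the cleanest statement is that $\DD_s|_\al$ is nilpotent for every $s$, and since $\cc$ has rank $1$, any nilpotent bundle map whose image lies in the rank-one bundle $\image\cc$ and which kills $\ker\cc \supseteq \al$ is in fact zero on $\al$. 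Hence $\cc\circ\ee^s = 0$ on $\al$ for all $s\geq 1$; in particular $\ee^s(\al)\subseteq \ker\cc$ for all $s$, which retroactively justifies that $E^{\infty}_{\image\cc}\subseteq\ker\cc$ — so I would actually run this as a simultaneous induction on $t$: assuming $E^{t-1}_{\image\cc}\subseteq\ker\cc$, Lemma \ref{E}(i) makes $E^t_{\image\cc}$ a holomorphic subbundle of $\v$, Lemma \ref{e holo} (applied with $E^{t-1}_{\image\cc}$ in place of $\al$) together with Proposition \ref{ce^s} and the rank-one nilpotency remark (Remark \ref{rem:nilpotency test}(i), applied fibrewise to $\image\cc$) gives $\cc\circ\ee^t = 0$ on $E^{t-1}_{\image\cc}$, i.e. $\ee^t(\image\cc)\subseteq\ker\cc$, so $E^t_{\image\cc}\subseteq\ker\cc$ and the induction continues. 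Since everything lives in $\C^n$, the chain $E^0\subseteq E^1\subseteq\cdots$ stabilizes, giving the holomorphic bundle $\al\subseteq\ker\cc$ with $\image\cc\subseteq\al$ and $\ee(\al)\subseteq\al$.

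Next I would show forward replacement of this $\al$ raises the isotropy order, imitating the proof of Proposition \ref{G2}. Writing $\be = \v\ominus\al$ and building the refined diagram \eqref{diag:second return2} (with $\al_i$ obtained by pushing $\al$ forward along $A'_{G^{(i-1)}(\v)}$, and $\al_{r+1} = \image A'_{G^{(r)}(\v)}|_{\al_r}$, $\be_{r+1} = R\ominus\al_{r+1}$), the forward-replaced map is $\wt\v = \be\oplus\al_1$. To see its isotropy order exceeds $r$, I need the composite arrow $\al_{r+1}\to\be_0$ — the last edge of the cycle \eqref{D1} — to vanish. This is where the nilpotent cycle from Proposition \ref{ce^s} (really just the $s=1$ case, Proposition \ref{ce}) does the work: $\DD_1 = \cc\circ\ee|_\al$ is nilpotent, hence (rank-one image in $\image\cc$, kills $\ker\cc$) zero; since the path \eqref{D1} is $\al\to\al_1\to\cdots\to\al_r\to R\to\be_0\to\cdots\to\be_r\to\al$ and all arrows except possibly $\al_{r+1}\to\be_0$ (equivalently $R\to\be_0$ at the relevant component) are known to be non-zero in the irreducible case, that remaining arrow must vanish; in the reducible case the relevant $\al_i$'s are already zero and the conclusion is even easier (as in the last paragraph of the proof of Proposition \ref{G2}). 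Consequently $G^{(r+1)}(\wt\v)$ lies in a subbundle orthogonal to $\wt\v$, so the isotropy order of $\wt\v$ is at least $r+1$, strictly greater than that of $\v$.

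I expect the main obstacle to be the bookkeeping in the simultaneous induction: making precise that at each stage the hypotheses of Lemma \ref{e holo}, Lemma \ref{E}(i) and Proposition \ref{ce^s} are met with $E^{t-1}_{\image\cc}$ playing the role of $\al$ — in particular that $\ee$ maps $E^{t-1}_{\image\cc}$ into $\v$ so that $E^t$ makes sense, and that the nilpotency-implies-vanishing step is legitimate (this uses crucially that $\rk\cc = 1$, so that $\image\cc$ is a line bundle and a nilpotent endomorphism landing in it is zero, cf.\ Remark \ref{rem:nilpotency test}(i)). A secondary technical point is handling the isolated points where $\rk\cc$ drops and where the various images need filling out zeros; these are dealt with as usual by the remarks preceding Proposition \ref{Gk nil 2 rk 1} and in \cite[Proposition 2.2]{burstall-wood}, and do not affect the holomorphic-subbundle structure.
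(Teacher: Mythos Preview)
Your overall strategy coincides with the paper's: set $\al = E^{\infty}_{\image\cc}$, use Proposition~\ref{ce^s} to show inductively that every $\ee^s(\image\cc)$ lies in $\ker\cc$ (so that Lemma~\ref{E}(ii) applies), and then forward replace. Two points in your execution need tightening, however.

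First, in the induction step you should apply Proposition~\ref{ce^s} with $\al=\image\cc$ and $s=t$, not with $\al=E^{t-1}_{\image\cc}$. With $\al=\image\cc$ the hypothesis~\eqref{ce^t=0} is exactly your induction hypothesis $\ee^u(\image\cc)\subseteq\ker\cc$ for $u<t$, and since $\image\cc$ has rank one the nilpotent map $\DD_t$ is zero by Remark~\ref{rem:nilpotency test}(i). With $\al=E^{t-1}_{\image\cc}$ the hypothesis~\eqref{ce^t=0} asks for $\cc\circ\ee^u$ to vanish on all of $E^{t-1}_{\image\cc}$, which involves $\cc\circ\ee^{u+j}|_{\image\cc}$ for $j$ up to $t-1$ --- information you do not yet have. (The paper phrases this step as a contradiction argument on the least $s$ with $\ee^s(\image\cc)\not\subseteq\ker\cc$, but it is the same induction.)

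Second, in showing the isotropy order increases, do not re-invoke Proposition~\ref{ce}: the conclusion ``$\DD_1$ nilpotent on $\al$ implies $\DD_1=0$'' fails once $\rk\al>1$, and your parenthetical justification (image in $\image\cc$, kills $\ker\cc$) does not repair it. Instead use the fact you already recorded, namely $\ee(\al)\subseteq\al$; this says precisely that $\pi_{\be}\circ\ee|_{\al}=0$, i.e.\ the arrow $\al_{r+1}\to\be_0$ vanishes, and then $G^{(r+1)}(\wt\v)\subseteq\be_{r+1}\oplus\al_0\perp\wt\v$ as you want. This is how the paper concludes.
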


\begin{proof}
 Denote the isotropy order of $\v$ by $r$.
Suppose there is an $s \in \N$ such that $\ee^s(\image \cc)$ does not lie in $\ker \cc$; choose the least such $s$.  Then, by Proposition \ref{ce^s}, $\cc \circ \ee^s = \cc \circ \pi_{\v \ominus \ker \cc} \circ \ee^s:\image \cc \to \image \cc$ is nilpotent and so zero. Now, as explained above, $\cc:\v \ominus \ker \cc \to \image \cc$ is an isomorphism (away from isolated points); this means that $\pi_{\v \ominus \ker \cc} \circ \ee^s(\image \cc)$ must be zero, so  $\ee^s(\image \cc)$ lies in $\ker \cc$, a contradiction.
Hence there is no such $s$ and,  by Lemma \ref{E}, $\al := E^{\infty}_{\image\cc}$ is a holomorphic subbundle of  $\v$,
with $\image\cc \subseteq \al \subseteq \ker \cc$, which is closed under $\ee$.

Let $\wt\v$ be obtained from $\v$ by forward replacement of $\al   = E^{\infty}_{\image \cc}$.
 Then since this $\al$ is closed under $\ee$, the arrow  $\al_{r+1}\to\be_0$ in diagram \eqref{diag:G2n} is zero. So $G^{(r+1)}(\wt\v) = \image(A'_{G^{(r)}(\wt\v)})$ lies in
 $\be_{r+1} \oplus \al_0$  which is orthogonal to $\wt\v$, showing that $\wt\v$ has isotropy order at least $r+1$.
\end{proof}

For $\cc$ of rank more than one, we will find forward replacements which reduce the rank of $\cc$.  These use the following lemma which does not require finite uniton number.

\begin{lemma} \label{reduce rank}
Let $\v:M \to G_k(\C^n)$ be harmonic {of finite isotropy order $r$ ($r \geq 1$), with first return map $\cc$} of nilorder $2$.  Let $\al$ be a holomorphic subbundle of $\v$ with
$\image \cc \subseteq \al \subseteq \ker \cc$ and let $\wt\v$ be obtained from $\v$ by forward replacing $\al$, thus $\wt\v = \be \oplus \al_1$ where $\be = \v \ominus \al$ and $\al_1 = A'_{\v}(\al)$.    With $\al_i$ and $R$ as in Example \ref{ex:c2=0}, define $\wt \cc_r:\wt\v \to \wt\v$ by $\wt \cc_r|_{\be} = 0$ and
$\wt \cc_r|_{\al_1}= $ the composition $\al_1 \to \cdots \to \al_r \to R \to \be$; note that   $\image(\wt\cc_r)= \wt \cc_r(\al_1) = \pi_{\be} \circ \ee(\al)$. If\/ $\wt \cc_r$ is zero then $\wt\v$ has isotropy order greater than  $r$; otherwise $\wt\v$ has isotropy order $r$ and first return map $\wt \cc_r$ of nilorder $2$.
\end{lemma}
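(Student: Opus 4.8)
The plan is to analyze the harmonic sequence and return maps of $\wt\v = \be \oplus \al_1$ by reading off the relevant second fundamental forms from the refined diagram \eqref{diag:G2n} (suitably adapted since here the vertices need not have rank one). First I would record the structure of the Gauss bundles of $\wt\v$: since forward replacement of $\al$ replaces $\al$ by $\al_1 = A'_\v(\al)$, the $\pa'$-Gauss bundles $G^{(i)}(\wt\v)$ for $1 \le i \le r$ coincide (after filling out zeros) with the subbundles built from $\al_1,\ldots,\al_r$ together with the complementary pieces $\be_1,\ldots,\be_r$; more precisely one checks that $G^{(i)}(\wt\v) \subseteq \al_{i+1} \oplus \be_i$ type decompositions hold, and that the first return path of $\wt\v$ goes $\wt\v \to G'(\wt\v) \to \cdots \to G^{(r-1)}(\wt\v) \to (\text{complement}) \to \wt\v$. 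The key computation is that the only way a cycle on $\wt\v$ of degree $2$ can close up in length $r+1$ is via the $\be$-chain $\be \to \be_1 \to \cdots \to \be_r \to \al_0 \to \al_1 \subseteq \wt\v$ followed by projection, which is exactly $\wt\cc_r$ as defined in the statement, together with (possibly) the short cycle coming from $R$; and an inspection of \eqref{diag:G2n} together with the hypothesis $\cc^2 = 0$ (which kills certain arrows out of $\al_1$) shows these are the only contributions.

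Next I would treat the two cases. If $\wt\cc_r = 0$, then there is no external cycle on $\wt\v$ of length $r+1$ at all: the chain $\be \to \be_1 \to \cdots \to \be_r \to \al_0 \to \al_1$ vanishes, and since (by the argument already used in the proof of Proposition \ref{G2}) the arrow $\al_{r+1} \to \be_0$ is forced to be zero by nilpotency of $\DD_1 = \cc \circ \ee|_\al$ from Proposition \ref{ce} — wait, more carefully, I need $\wt\cc_r = 0$ to directly force $G^{(r+1)}(\wt\v) \perp \wt\v$. Indeed $G^{(r+1)}(\wt\v) = \image A'_{G^{(r)}(\wt\v)}$, and the component of this landing back in $\wt\v = \be \oplus \al_1$ is precisely governed by $\wt\cc_r$ (the $\be$-valued part) together with the arrow $\al_{r+1}\to\al_1$-type contributions which do not exist because $G^{(r)}(\wt\v)$ continues to a genuinely new bundle; so $\wt\cc_r = 0$ gives isotropy order $> r$. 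Conversely, if $\wt\cc_r \ne 0$, then $\wt\v$ is orthogonal to $G^{(i)}(\wt\v)$ for $1 \le i \le r$ (this is inherited from $\v$ having isotropy order $r$, using that the new vertices $\al_1,\ldots$ sit inside the old span) but $\wt\cc_r \ne 0$ exhibits a nonzero first return map at level $r$, so the isotropy order is exactly $r$; and $\wt\cc_r$ is by construction the first return map $\cc_r(\wt\v)$.

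Finally I would verify $\wt\cc_r^{\,2} = 0$. This is where Proposition \ref{ce} (nilpotency of $\DD_1 = \cc \circ \ee|_\al$) enters decisively: tracing the diagram \eqref{diag:second return2}/\eqref{diag:G2n}, the composition $\wt\cc_r \circ \wt\cc_r$ as a path on $\wt\v$ restricted to $\al_1$ is, up to the isomorphism $A'_\v|_\al : \al \to \al_1$ and the complementary identification, exactly the operator $\cc \circ \ee|_\al = \DD_1$ — or rather $\ee \circ \cc$ read the other way — composed appropriately. Concretely, $\wt\cc_r(\al_1) = \pi_\be\circ\ee(\al) \subseteq \be$, and then applying $\wt\cc_r$ again requires going $\be \to \be_1 \to \cdots$, which only produces something nonzero in $\al_1$ via the arrow $\al_{r+1}\to\be_0$ being nonzero — but that composition is $A'_{\al_0,\al_1}\circ(\text{the path }\be\to\cdots\to\al_0)$ applied after $\pi_\be\circ\ee$, i.e. essentially $\cc(\v)\circ\ee(\v)$ transported to $\wt\v$, which vanishes by Proposition \ref{ce}. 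Thus $\wt\cc_r^{\,2}=0$, so the nilorder is exactly $2$ (it is nonzero since $\wt\cc_r\ne 0$ in this case). The main obstacle I anticipate is the bookkeeping identifying $\wt\cc_r\circ\wt\cc_r$ on $\wt\v$ with $\DD_1$ on $\al$ precisely — one must be careful about which projections and which pieces of the diagram are involved, and about the role of the possibly-nonzero-rank vertices (as opposed to the rank-one case of Proposition \ref{G2}), but the hypothesis $\image\cc\subseteq\al\subseteq\ker\cc$ and the holomorphicity/surjectivity statements recorded before Example \ref{ex:c2=0} should make every arrow either forced to vanish or identified with a standard second fundamental form.
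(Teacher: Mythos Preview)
Your proposal has a genuine gap: you invoke Proposition \ref{ce} (nilpotency of $\DD_1 = \cc\circ\ee|_\al$) to prove $\wt\cc_r^{\,2}=0$, but Proposition \ref{ce} requires $\v$ to have \emph{finite uniton number}, whereas Lemma \ref{reduce rank} is stated (and explicitly flagged in the paper) without that hypothesis.  So your argument for the nilorder-$2$ claim is circular in later applications and simply invalid here.

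In fact the nilorder-$2$ claim is trivial once you take the definition of $\wt\cc_r$ at face value: by construction $\wt\cc_r|_\be = 0$ and $\wt\cc_r(\al_1) \subseteq \be$, so $\wt\cc_r^{\,2}(\wt\v) = \wt\cc_r(\wt\cc_r(\al_1)) \subseteq \wt\cc_r(\be) = 0$.  No diagram chase and no Proposition \ref{ce} are needed for this part.  The actual content of the lemma is the identification of $\wt\cc_r$ with the first return map of $\wt\v$ and the reading of the isotropy order, both of which are direct from diagram \eqref{diag:second return2}: since $G^{(i)}(\wt\v) \subseteq \be_i \oplus \al_{i+1}$ and $\al_{i+1}$ already sits inside $G^{(i)}(\wt\v)$, the component of $A'_{G^{(i)}(\wt\v)}$ on $\be_i$ landing in $\al_{i+1}$ is internal and hence killed, so the $\be$-chain never reaches $\al_{r+1}$; thus the first return on $\be$ is zero, matching the definition.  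Your first paragraph gestures at this but gets the bookkeeping backwards (you describe a chain $\be\to\be_1\to\cdots\to\be_r\to\al_0\to\al_1$ landing in $\wt\v$, but $\al_0$ is in $\wt\v^\perp$, not $\wt\v$).  The paper's proof is simply ``clear from diagram \eqref{diag:second return2}'', and once you track which subbundles lie in $\wt\v$ versus $\wt\v^\perp$ the verification is immediate.
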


\begin{proof} The result is clear from diagram \eqref{diag:second return2}.
\end{proof}

Note that the lemma still holds when $\rk \al_1 < \rk \al$ (in which case $\rk \wt\v < \rk \v$); in the extreme case $\al_1=0$, we have $\wt\v = \be$ and $\ee(\al)=0$.
Combining the lemma with the nilpotency of $\cc\circ\ee$ enables us to simplify maps into $G_4(\C^n)$ as follows.

\begin{proposition} \label{G4}
Let $\v:M \to G_4(\C^n)$ be a harmonic map of finite uniton number and finite isotropy order $r$ ($r \geq 1$), with first return map $c$ of nilorder $2$ and rank $2$.
Then the harmonic map
$\wt{\v}$ obtained from $\v$ by forward replacement of the image of the first return map of $\v$ \emph{either} has isotropy order $r$ and first return map of nilorder $2$ and rank $1$, \emph{or} has isotropy order more than $r$.
\end{proposition}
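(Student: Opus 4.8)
The plan is to run the forward replacement through Lemma~\ref{reduce rank} and then pin down the rank of the resulting first return map using the nilpotency of $\cc\circ\ee$ (Proposition~\ref{ce}). Since $\v$ has rank $4$ and its first return map $\cc$ has rank $2$ with $\cc^2=0$, both $\ker\cc$ and $\image\cc$ have rank $2$; because $\cc^2=0$ forces $\image\cc\subseteq\ker\cc$ and these are holomorphic subbundles of equal rank, they coincide. Set $\al:=\image\cc=\ker\cc$, a rank-$2$ holomorphic subbundle of $\v$ with $\image\cc\subseteq\al\subseteq\ker\cc$, and let $\wt\v=\be\oplus\al_1$ be the harmonic map obtained by forward replacing $\al$, where $\be=\v\ominus\al$ and $\al_1=A'_\v(\al)$; this is the forward replacement of $\image\cc$ named in the statement.

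I would then apply Lemma~\ref{reduce rank} with this $\al$, using the bundle map $\wt\cc_r:\wt\v\to\wt\v$ of that lemma, for which $\wt\cc_r|_\be=0$, $\wt\cc_r|_{\al_1}$ is the composition $\al_1\to\cdots\to\al_r\to R\to\be$, and $\image\wt\cc_r=\wt\cc_r(\al_1)=\pi_\be\circ\ee(\al)$. That lemma already furnishes the required dichotomy apart from the rank claim: if $\wt\cc_r=0$, then $\wt\v$ has isotropy order strictly greater than $r$, which is the second alternative in the statement; if $\wt\cc_r\neq0$, then $\wt\v$ has isotropy order exactly $r$ and $\wt\cc_r$ is its first return map, of nilorder $2$, and it remains only to show that $\rk\wt\cc_r=1$.

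The heart of the argument, and where I expect the only real effort to lie, is a rank comparison between $\wt\cc_r$ and the nilpotent bundle map $\DD_1=\cc\circ\ee|_\al:\al\to\al$ of Proposition~\ref{ce}. Splitting $\ee|_\al$ into its $\al$- and $\be$-components and using $\cc|_\al=0$ (since $\al=\ker\cc$), one gets $\DD_1=\cc|_\be\circ(\pi_\be\circ\ee|_\al)$, which is precisely the composition~\eqref{D1} read on diagram~\eqref{diag:second return2} as $(\be\to\be_1\to\cdots\to\be_r\to\al)$ preceded by $(\al\to\al_1\to\cdots\to\al_r\to R\to\be)$. Now $\pi_\be\circ\ee|_\al:\al\to\be$ has image $\pi_\be\circ\ee(\al)=\image\wt\cc_r$, so $\rk(\pi_\be\circ\ee|_\al)=\rk\wt\cc_r$; on the other hand $\image\cc=\cc(\be)$, again because $\cc$ vanishes on $\al$, so $\cc|_\be:\be\to\al$ has rank $\rk\cc=2=\rk\be$ and is therefore injective off the isolated set where the rank drops. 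Hence, generically, $\rk\DD_1=\rk(\pi_\be\circ\ee|_\al)=\rk\wt\cc_r$; and since $\DD_1$ is a nilpotent endomorphism of the rank-$2$ bundle $\al$ we have $\rk\DD_1\leq1$, whence $\rk\wt\cc_r\leq1$. Being in the case $\wt\cc_r\neq0$, this forces $\rk\wt\cc_r=1$, completing the proof.

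Finally I would record that the argument is untroubled by degeneracies. All ranks above are read off the isolated set where ranks drop, as is standard for these bundle maps (cf.\ the remark preceding Proposition~\ref{Gk nil 2 rk 1}). If $\rk\al_1<2$, or if some $G^{(i)}(\v)$ is $\pa'$-reducible so that certain $\al_j$ vanish, then $\ee|_\al$, and hence $\wt\cc_r=\pi_\be\circ\ee(\al)$, only becomes smaller; such cases are already absorbed into Lemma~\ref{reduce rank} (see the remark following it), and the bound $\rk\wt\cc_r\leq1$ either is immediate (e.g.\ $\ee|_\al$ factors through $A'_\v|_\al$, of rank $\rk\al_1$) or follows from the same computation verbatim.
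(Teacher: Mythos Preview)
Your proof is correct and follows essentially the same route as the paper's: set $\al=\image\cc$, invoke Lemma~\ref{reduce rank} for the dichotomy, and use the nilpotency of $\DD_1=\cc\circ\ee|_\al$ from Proposition~\ref{ce} together with the fact that $\cc|_\be:\be\to\al$ is an isomorphism away from isolated points to bound $\rk(\pi_\be\circ\ee|_\al)=\rk\wt\cc_r$ by $1$. The only addition is that you make explicit the identification $\image\cc=\ker\cc$ (forced by the rank count), which the paper uses implicitly when writing $\be=\v\ominus\ker\cc$.
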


\begin{proof}  With $\al_0=\al =$ the image of the first return map, we get diagram \eqref{diag:second return2}.  The harmonic map obtained from $\v$ by forward replacement of $\al$ is
$\wt\v = \be_0 \oplus \al_1$ where $\al_1 = \image A'_{\v}|_{\al}$.

Then, from Proposition \ref{ce}, $\cc \circ \ee = \cc \circ \pi_{\beta} \circ \ee:\al \to \al$ is nilpotent, so of rank at most $1$.  Since $\cc: \beta = \v \ominus \ker \cc \to \al = \image \cc$ is an isomorphism (away from isolated points), this means that
$\pi_{\be}\circ \ee(\al)$ must have rank at most $1$. The result follows from Lemma \ref{reduce rank}.
\end{proof}

We remark that the same result applies to $G_{2j}(\C^n)$ with a first return map of nilorder 2 and rank $j$ where $j \in \N$.

To deal with $G_5(\C^n)$, we introduce a new nilpotent cycle:

\begin{proposition} \label{cece^2}
Let $\v:M \to G_k(\C^n)$ be a harmonic map of finite uniton number and finite isotropy order,  with first return map $\cc$ of nilorder 2 and rank 2.
Set $\eta = \image(\cc \circ \ee)|_{\image \cc}$  and define a bundle map $\FF: \eta \to \eta$ by
$$
 \FF = \cc \circ \ee \circ \cc \circ \ee^2|_{\eta}.
$$
Then $\FF$ is nilpotent, and so is zero.
\end{proposition}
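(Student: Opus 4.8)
I would prove this by exhibiting a cycle on $\v$ whose corresponding operator restricts to $\FF$ on $\eta$, and showing it satisfies the hypotheses of Proposition \ref{nilpotency test} (via Lemma \ref{uniqueness test}). The natural candidate is the cycle $G = c \circ e \circ c \circ e^2$ on $\v$ (reading right to left, composing paths), of type $(\ell,m)$ where, if $r$ is the isotropy order, $\ell = 2(r+1) + 3(r+2) = 5r+8$ and $m = 2+2+2+4 = 10$; equivalently $G = D_1 \circ D_2$ in the notation of the proof of Proposition \ref{ce^s}, where $D_1 = c\circ e$ and $D_2 = c\circ e^2$. First I would record that, by Proposition \ref{ce} and the rank/nilpotency reasoning already used in the proof of Proposition \ref{G4}, $\cc \circ \ee|_{\image \cc}$ is nilpotent of rank at most $1$ (since $\cc$ has rank $2$), so $\eta = \image(\cc\circ\ee)|_{\image\cc}$ is a line subbundle of $\image\cc \subseteq \al$, and $\eta \subseteq \ker(\cc\circ\ee)$; in particular $\FF$ does map $\eta$ to $\eta$ and, being a bundle map on a line bundle, is nilpotent if and only if it is zero, so it suffices to prove nilpotency. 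I would also check $\DD_1 = \cc\circ\ee|_\al = 0$ fails in general, so one cannot simply invoke Proposition \ref{ce^s}; instead one works one level down, on $\eta$, where $\cc\circ\ee$ \emph{does} vanish.

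\textbf{Key steps.} The core is to verify the two conditions of Lemma \ref{uniqueness test} for the cycle $G$ on $\v$ with $\al$ there taken to be $\eta$:
\begin{enumerate}
\item[(i)] any non-zero cycle on $\v$ of degree $10$ has length at least $5r+8$;
\item[(ii)] any cycle on $\v$ of type $(5r+8, 10)$ is zero on $\eta$ or equals $G$.
\end{enumerate}
For (i): a degree-$10$ cycle is a composition of five degree-$2$ cycles, each of which is either $c$ (length $r+1$) or a cycle $\ep$ of type $(r+2,2)$ as in Lemma \ref{epsilon} (or is longer). Since $\cc^2 = 0$, two consecutive $c$'s give zero, so between any two $c$'s there is at least one $\ep$; the minimal-length non-zero arrangement with five degree-$2$ pieces and $\cc^2=0$ is $c,\ep,c,\ep,\ep$ (two $c$'s, three $\ep$'s), giving $2(r+1)+3(r+2) = 5r+8$. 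For (ii): once the length is exactly $5r+8$, the cycle must be $c \circ \ep_3 \circ c \circ \ep_2 \circ \ep_1$ with each $\ep_i$ of type $(r+2,2)$, hence each of the four forms in Lemma \ref{epsilon}. Then I would eliminate cases exactly as in the proof of Proposition \ref{ce^s}: any $\ep_i$ equal to $u\circ c$, $c\circ u$, or $\widehat c$ forces the whole composition to vanish on $\eta$, using (a) $\eta \subseteq \ker\cc$ (so a trailing $c$ after nothing kills $u\circ c$), (b) $\image\widehat\cc \subseteq \image\cc$ together with $\cc\circ\ee|_{\image\cc}$ mapping into $\eta$ and $\cc\circ\ee|_{\eta}=0$ when composed appropriately, and (c) the key new input $\cc\circ\ee|_{\eta} = 0$, which handles the $c\circ e$-blocks. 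Tracking which compositions survive on $\eta$, the only non-vanishing possibility is $\ep_1 = \ep_2 = e$ and $\ep_3 = e$, i.e. $G = c\circ e\circ c\circ e^2$. Granting (i) and (ii), Lemma \ref{uniqueness test} gives that for every $j$, any cycle on $\v$ of type $(j(5r+8), 10j)$ is zero on $\eta$ or equals $G^j$; then Proposition \ref{nilpotency test} (with $\psi = \v$, $\al = \eta$) yields that the bundle map $\mathbf{G}|_\eta = \FF$ is nilpotent, hence zero.

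\textbf{Main obstacle.} The delicate part is the case analysis in (ii): one must be careful that the relevant \emph{partial} compositions (not just $\FF$ itself) vanish on $\eta$ or have image in $\image\cc$, so that inserting any "bad" $\ep_i$ collapses the cycle. Concretely, one needs the chain of implications: $\cc\circ\ee|_\eta = 0 \Rightarrow \cc\circ\ee$ composed with anything mapping into $\eta$ vanishes on $\eta$; and $\image(\cc\circ\ee)|_{\image\cc} = \eta \subseteq \ker\cc \Rightarrow \cc\circ\ee\circ\cc\circ\ee|_{\image\cc}=0$. Getting the bookkeeping right — in particular confirming that the three $\ep$'s must be positioned as $c,\ep,c,\ep,\ep$ and not, say, $c,\ep,\ep,c,\ep$, by re-examining the "is zero on $\al$" arguments of Proposition \ref{ce^s} one block at a time — is where the work lies, but it is entirely parallel to the induction already carried out for the $c\circ e^s$ family, so no genuinely new idea is required beyond identifying the right base subbundle $\eta$ and the right cycle $G$.
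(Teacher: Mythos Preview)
Your strategy is exactly the paper's: take the cycle $G = c\circ e\circ c\circ e^2$ on $\v$ of type $(5r+8,10)$, set $\al=\eta$, verify conditions (i) and (ii) of Lemma~\ref{uniqueness test}, then invoke Proposition~\ref{nilpotency test}. You also correctly record the key preliminary fact $(\cc\circ\ee)^2|_{\image\cc}=0$, i.e.\ $\eta = \ker(\cc\circ\ee)|_{\image\cc}$, coming from Proposition~\ref{ce} and $\rk(\image\cc)=2$.

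There is, however, a real gap in your argument for condition~(i). From $\cc^2=0$ alone you deduce only that two \emph{adjacent} $c$'s vanish; with five degree-$2$ pieces this still permits \emph{three} separated $c$'s, namely $B=c\circ\ep_2\circ c\circ\ep_1\circ c$, of length $3(r+1)+2(r+2)=5r+7<5r+8$. Your sentence ``the minimal-length non-zero arrangement with five degree-$2$ pieces and $\cc^2=0$ is $c,\ep,c,\ep,\ep$'' is therefore not justified by the reasoning you give. The paper disposes of this three-$c$ case explicitly: if either $\ep_i$ is of form (2), (3) or (4) in Lemma~\ref{epsilon} then $B=0$ using $\cc^2=0$, so one is forced to $\ep_1=\ep_2=e$, giving $B=c\circ e\circ c\circ e\circ c$; now the rightmost $\cc$ has image in $\image\cc$, on which $(\cc\circ\ee)^2=0$ by the fact you already recorded, so $B=0$. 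This is precisely where the rank-$2$ hypothesis (via Proposition~\ref{ce}) enters the proof of~(i), not only~(ii), and you need to invoke it there. Once the three-$c$ case is handled, the rest of your outline---including the positional bookkeeping in~(ii) that you flag as the main obstacle---matches the paper's proof.
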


\begin{proof}
Let $r$ be the isotropy order of $\v$, and  consider the diagram  \eqref{diag:second return}. Let $F$ be the cycle on  $\v$ given by $c \circ e \circ c \circ e^2$. Note that  $\FF$ maps $\eta$ to $\eta$.
 If $F$ is zero, there is nothing to prove; otherwise, $\eta$ is non-zero. By Proposition \ref{ce}, $\cc \circ \ee:\image \cc \to \image \cc$ is nilpotent; since $\image \cc$ has rank $2$, this means that
\begin{equation} \label{(ce)^2}
(\cc \circ \ee)^2|_{\image \cc} = 0 \text{ and }
\eta = \image (\cc \circ \ee)|_{\image \cc} = \ker (\cc \circ \ee)|_{\image \cc} \text{ has rank } 1. 
\end{equation}

Now $F$ is of type $(\ell(F), 10)$ where $\ell(F)= 2(r+1)+3(r+2)$. We show that $F$ satisfies the conditions (i) and (ii) of Lemma \ref{uniqueness test}
with $\al$ taken to be $\eta$:

(i) Let $B$ be a  non-zero cycle on $\v$ of degree 10; we must show that $\ell(B) \geq \ell(F)$. Now $B$ is the composition of five  non-zero  cycles on $\v$ of degree 2, each of which is $c$ or a  (*) non-zero cycle $\ep_i$  on $\v$ of length at least $r+2$.  Since $\cc^2=0$, the $c$'s must be separated, so there can be at most three.

If $B$ has precisely three $c$'s we must have
$B=c \circ \ep_2 \circ c \circ \ep_1 \circ c$  for some $w_i$ as in (*).  If this has length less than $\ell(F)$, then the $\ep_i$ must have length $r+2$ and so are as described in Lemma \ref{epsilon}.   Possibilities (2), (3) and (4) give $B=0$, so we must have $\ep_1 = \ep_2 = e$ giving $B=c \circ e \circ c \circ e \circ c$.  But then by \eqref{(ce)^2}, $B=0$.
Hence, $B$ can contain at most two $c$'s, so that $\ell(B) \geq \ell(F)$.

(ii) Now let $B$ be a  cycle on $\v$ of type $(\ell(F), 10)$ which is non-zero on $\eta$.  We must show that $B=F$.  As in (i), it is the composition of two $c$'s and three cycles $\ep_i$  on $\v$ of length $r+2$.  Furthermore, it cannot start with $c \circ \ep_1$ as this is zero on $\eta$\, --- this follows from $\cc^2=0$ if $\ep_1 = u \circ c$, $c \circ u$ or $\widehat c$, and follows from $\eta= \ker(\cc \circ \ee)$ if
$\ep_1 = e$.
Hence $B = c \circ \ep_3 \circ c \circ \ep_2 \circ \ep_1$. Now $\ep_3 = e$ as, otherwise, $c \circ \ep_3 \circ c = 0$. Next, note that $\ep_1$ cannot be $u \circ c$ as $\cc|_{\eta} = 0$.
Also, $\ep_2 = e$ or $u \circ c$ as the other possibilities give zero.

Suppose that $\ep_2 = u \circ c$. Then, if $\ep_1 = c \circ u$ or $\widehat c$, $B=0$. So $\ep_1=e$ but then, by $\cc \circ \ee|_{\eta} = 0$, we have $\BB|_{\eta}=0$.  Hence $\ep_2 = e$ and $\ep_1 = c \circ u$, $\widehat c$ or $e$.  In the first two cases, $\image \BB \subseteq (\cc \circ \ee)^2(\image \cc) = 0$.
Hence $\ep_1=e$ so that $B=F$.
\end{proof}

We apply this new nilpotent cycle to understand maps into $G_5(\C^n)$:

\begin{proposition} \label{G5} Let $\v:M \to G_5(\C^n)$ be a harmonic map of finite uniton number and finite isotropy order $r$ \ ($r \geq 1$), with first return map $\cc$ of nilorder 2 and rank 2.  Then there is a holomorphic subbundle $\al$ of $\v$ with $\image \cc \subseteq \al \subseteq \ker \cc$ such that the harmonic map
$\wt{\v}$ obtained from $\v$ by forward replacement of $\al$ \emph{either} has isotropy order $r$ and first return map of nilorder $2$ and rank $1$, \emph{or} has isotropy order more than $r$.
\end{proposition}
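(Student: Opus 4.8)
The plan is to reduce the statement to Lemma~\ref{reduce rank}: it suffices to produce a holomorphic subbundle $\al$ of $\v$ with $\image\cc\subseteq\al\subseteq\ker\cc$ for which $\rk\bigl(\pi_{\v\ominus\al}\circ\ee(\al)\bigr)\le 1$. Indeed, forward replacing such an $\al$ yields a harmonic map $\wt\v$ whose first return map $\wt\cc_r$ satisfies $\wt\cc_r|_{\v\ominus\al}=0$ and $\image\wt\cc_r=\pi_{\v\ominus\al}\circ\ee(\al)$, so $\wt\cc_r$ automatically has nilorder $2$ and rank at most $1$; by Lemma~\ref{reduce rank}, if this rank is $0$ then $\wt\v$ has isotropy order more than $r$, and if it is $1$ then $\wt\v$ has isotropy order $r$ with first return map of nilorder $2$ and rank $1$. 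So the whole problem is to find the right $\al$.

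Next I would observe that, since $\rk\cc=2$ and $\cc^2=0$, we have $\rk\image\cc=2$ and $\rk\ker\cc=3$, with $\image\cc$ and $\ker\cc$ holomorphic subbundles; hence the only candidates are $\al=\image\cc$ and $\al=\ker\cc$, and one of these two must be made to work. Put $N:=\cc\circ\ee|_{\ker\cc}:\ker\cc\to\ker\cc$, which is nilpotent by Proposition~\ref{ce}. Since $\cc$ restricts to a fibrewise isomorphism $\v\ominus\ker\cc\to\image\cc$ off an isolated set, one checks $\rk\bigl(\pi_{\v\ominus\ker\cc}\circ\ee(\ker\cc)\bigr)=\rk N$, so the choice $\al=\ker\cc$ works whenever $\rk N\le 1$. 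This leaves the case $\rk N=2$: then $N$ is a rank-$2$ nilpotent bundle map of a rank-$3$ bundle, hence (generically) a single size-$3$ Jordan block, so $\eta:=\image(\cc\circ\ee)|_{\image\cc}$ has rank exactly $1$, $\image N=\image\cc$, and $\eta=\ker\bigl(N|_{\image\cc}\bigr)$. Here I would take $\al=\image\cc$.

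For $\al=\image\cc$ we have $\v\ominus\image\cc=(\ker\cc\ominus\image\cc)\oplus(\v\ominus\ker\cc)$; since $\cc$ carries $\pi_{\v\ominus\ker\cc}\circ\ee(\image\cc)$ isomorphically onto $\cc\circ\ee(\image\cc)=\eta$ (rank $1$), it suffices to prove $\pi_{\ker\cc\ominus\image\cc}\circ\ee(\image\cc)=0$, for then $\rk\bigl(\pi_{\v\ominus\image\cc}\circ\ee(\image\cc)\bigr)\le 1$. This map $\image\cc\to\ker\cc\ominus\image\cc$ is a component of $\pi_{\v\ominus\image\cc}\circ\ee|_{\image\cc}$ and so holomorphic by Lemma~\ref{e holo}. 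Its vanishing on the line $\eta$ is exactly where Proposition~\ref{cece^2} is used: since $N|_\eta=0$ one has $\ee(\eta)\subseteq\ker\cc$, and evaluating the operator $\FF=\cc\circ\ee\circ\cc\circ\ee^2$ on a local generator of $\eta$ shows that the only component of $\ee(\eta)$ that $\FF$ can see is precisely its $(\ker\cc\ominus\image\cc)$-component; since $\FF|_\eta=0$ by Proposition~\ref{cece^2}, that component is zero, i.e.\ $\ee(\eta)\subseteq\image\cc$. The step I expect to be the main obstacle is propagating this vanishing from the line $\eta$ to the whole rank-$2$ bundle $\image\cc$ (equivalently, to the quotient line $\image\cc/\eta$); I would attack it by combining the holomorphicity above with the rigid Jordan structure of $N$, falling back if necessary on showing that a nonzero $(\ker\cc\ominus\image\cc)$-component of $\ee|_{\image\cc}$ is incompatible with the bounded-powers criterion (Proposition~\ref{S-stabilizes}).
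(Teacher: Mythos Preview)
Your reduction to Lemma~\ref{reduce rank} and your case split on $\rk N$ (with $N:=\cc\circ\ee|_{\ker\cc}$) are correct, and the case $\rk N\le1$ is fine with $\al=\ker\cc$.  In the case $\rk N=2$ your idea of taking $\al=\image\cc$ also works, but you have set yourself an unnecessarily strong intermediate goal and thereby manufactured a phantom obstacle.  You do \emph{not} need $\pi_{\ker\cc\ominus\image\cc}\circ\ee(\image\cc)=0$; you only need $\rk\bigl(\pi_{\v\ominus\image\cc}\circ\ee(\image\cc)\bigr)\le1$.  But this follows immediately from what you have already proved: your argument via Proposition~\ref{cece^2} (using that, when $\rk N=2$, $N^2$ annihilates $\image\cc=\image N$ and is injective on $\ker\cc\ominus\image\cc=\ker\cc\ominus\ker N^2$) gives $\ee(\eta)\subseteq\image\cc$, i.e.\ $\pi_{\v\ominus\image\cc}\circ\ee|_\eta=0$.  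Since $\image\cc=\eta\oplus\psi$ with $\psi:=\image\cc\ominus\eta$ of rank~$1$, we get $\pi_{\v\ominus\image\cc}\circ\ee(\image\cc)=\pi_{\v\ominus\image\cc}\circ\ee(\psi)$, which has rank $\le\rk\psi=1$.  No ``propagation'' from $\eta$ to $\image\cc$ is required, and in fact your stronger claim $\pi_{\ker\cc\ominus\image\cc}\circ\ee(\psi)=0$ need not hold.

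It is worth noting that your split is genuinely different from the paper's.  The paper splits on the rank of $E^1_{\image\cc}=\spa\{\image\cc,\ee(\image\cc)\}$: in case~(a) ($\rk\le3$) it takes $\al=\image\cc$ directly, while in the hard case~(b) ($\rk=4$) it takes $\al=\ker\cc$ and uses Proposition~\ref{cece^2} to show $\pi_{\v\ominus\ker\cc}\circ\ee(\ker\cc)$ lands in a rank-$1$ subbundle.  Your hard case $\rk N=2$ is actually contained in the paper's easy case~(a) (indeed $\ee(\eta)\subseteq\image\cc$ forces $\rk E^1_{\image\cc}\le3$), and conversely the paper's case~(b) forces $\rk N\le1$.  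So the two proofs invoke Proposition~\ref{cece^2} on disjoint regimes and swap which of $\image\cc$, $\ker\cc$ is forward-replaced there; once your superfluous goal is dropped, your route is arguably the shorter of the two.
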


\begin{proof}
It follows from Lemma \ref{E}(i) that we have a holomorphic subbundle  $E^1_{\image \cc} = \spa\{\image \cc,\, \ee(\image \cc)\}$ of $\v$.

(a) Suppose that $E^1_{\image \cc}$ has rank at most $3$.
Set $\al = \image \cc$ and $\be=\v \ominus \al$.  Then, $\pi_{\be} \circ \ee(\al)$  has rank at most $1$ and we are done by Lemma \ref{reduce rank}.

(b) Otherwise, $E^1_{\image \cc}$ has rank $4$.  Since $\ker \cc$ has rank $3$ we must have $\pi_{\v \ominus \ker \cc} \circ \ee(\image \cc)$ non-zero.  Since $\cc:\v \ominus \ker \cc \to \image \cc$ is an isomorphism (away from isolated points), this implies that
$(\cc \circ \ee)(\image \cc) \neq 0$.  Writing  $\eta= (\cc \circ \ee)(\image \cc) \subseteq \image \cc$ we have from \eqref{(ce)^2}, $(\cc \circ \ee)(\eta) = 0$ i.e., $\ee(\eta) \subseteq \ker \cc$.

If we had $\ee(\eta) \subseteq \image \cc$, then $E^1_{\image \cc}$ would have rank at most $3$.  Hence,
$\spa\{\image \cc, \ee(\eta)\} = \ker \cc$.  Set $\psi = \image \cc \ominus \eta$ so that $\ker \cc = \spa\{\psi, \eta, \ee(\eta)\}$. Note that
$(\cc \circ \ee)(\psi) = (\cc \circ \ee)(\image \cc) = \eta$.

Set $\al = \ker \cc$ and $\be = \v \ominus \al$. By Proposition \ref{cece^2},
$\cc \circ \ee^2(\eta) \subseteq \ker(\cc \circ \ee)|_{\image \cc}
= \eta = (\cc \circ \ee)(\psi)$; 
since $\cc:\be=\v \ominus \ker \cc \to \image \cc$ is an isomorphism (away from isolated points), this implies that 
$\pi_{\be}\circ \ee(\psi)$ has rank one, and
$ \pi_{\be} \circ \ee^2(\eta) \subseteq
\pi_{\be}\circ \ee(\psi)$.
Also, since $\ee(\eta) \subseteq \ker \cc$ we have $\pi_{\be} \circ \ee(\eta)  = 0$.

It follows that 
$\pi_{\be} \circ \ee(\al)$ has image in the rank one subbundle $\pi_{\be} \circ \ee(\psi)$.
So, again, $\pi_{\be} \circ \ee(\al)$  has rank at most $1$  and we are done by Lemma \ref{reduce rank}.
\end{proof}

\subsection{First return map of nilorder greater than 2}

For first return maps of nilorder greater than 2 we need a new sequence of nilpotent cycles:

\begin{proposition} \label{nilorder p}
Let $\v:M \to G_k(\C^n)$ be a harmonic map of finite uniton number and finite isotropy order,  with first return map $\cc$ of nilorder $p \geq 2$.  Let $\al$ be a holomorphic subbundle of $\v$ with  $\image \cc^{p-1}\subseteq \alpha  \subseteq \ker \cc$.  Then the bundle map defined by
 $\EE_{p-1} = \cc^{p-1} \circ \ee|_{\al}:\al \to \al$ is nilpotent.
\end{proposition}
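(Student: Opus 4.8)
The plan is to reproduce, for a first return map of nilorder $p$, the argument that proved Proposition \ref{ce} (the case $p=2$): I would exhibit an explicit external cycle on $\v$ whose corresponding operator restricts on $\al$ to $\EE_{p-1}$, verify the two hypotheses of Lemma \ref{uniqueness test} for it, and then conclude nilpotency from Proposition \ref{nilpotency test} with $\psi=\v$.

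Concretely, let $r$ denote the isotropy order of $\v$ and work in diagram \eqref{diag:second return}. The cycle to use is $E:=c^{p-1}\circ e$, that is, the second return path $e$ followed by $p-1$ copies of the first return path $c$; its corresponding operator is $\cc^{p-1}\circ\ee$, which restricts to a map $\al\to\al$ because $\ee(\al)\subseteq\v$ and $\cc^{p-1}(\v)=\image\cc^{p-1}\subseteq\al$. Since $\ell(c)=r+1$ and $\ell(e)=r+2$, the cycle $E$ has type $(\ell_{r,p},2p)$ with $\ell_{r,p}:=(p-1)(r+1)+(r+2)$. It then remains to check hypotheses (i) and (ii) of Lemma \ref{uniqueness test} for $C=E$ and this $\al$; once this is done, the lemma gives that for every $j\in\N$ each cycle on $\v$ of type $(j\ell_{r,p},2jp)$ is zero on $\al$ or equal to $E^j$, and Proposition \ref{nilpotency test} then yields nilpotency of $\EE_{p-1}|_\al$.

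For (i), any non-zero cycle $B$ on $\v$ of degree $2p$ decomposes --- by cutting at each return to $\v$ and absorbing the intervening self-arrow stretches on $\v$ into the neighbouring excursions --- into a composition of exactly $p$ non-zero cycles on $\v$ of degree $2$, each of which is either $c$ itself (the unique external cycle on $\v$ of the minimal length $r+1$, as recalled above) or has length at least $r+2$. If all $p$ pieces were $c$ then $B=c^p$ and $\BB=\cc^p=0$; hence at most $p-1$ pieces are $c$, and a short count of lengths gives $\ell(B)\geq(p-1)(r+1)+(r+2)=\ell_{r,p}$, with equality forcing precisely $p-1$ copies of $c$ together with one piece $\ep$ of type $(r+2,2)$. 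For (ii), suppose $B$ has type $(\ell_{r,p},2p)$ and is non-zero on $\al$; then its innermost piece cannot be $c$, since $\cc|_\al=0$ (as $\al\subseteq\ker\cc$), so $B=c^{p-1}\circ\ep$ with $\ep$ one of the four cycles of Lemma \ref{epsilon}. If $\ep=u\circ c$ then $\epp|_\al=0$, again because $\al\subseteq\ker\cc$; if $\ep=c\circ u$ or $\ep=\widehat c$ then $\image(\epp|_\al)\subseteq\image\cc$, so $\BB|_\al$ has image in $\cc^{p-1}(\image\cc)=\cc^p(\v)=0$. In every case but $\ep=e$ this gives $\BB|_\al=0$; hence $B=E$.

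The one place I expect to need genuine care --- the main obstacle --- is the combinatorial bookkeeping in step (i): making rigorous that a non-zero degree-$2p$ cycle splits into exactly $p$ non-zero degree-$2$ sub-cycles (handling internal self-arrow segments on $\v$ correctly), and that equality in the length estimate pins down the configuration ``$(p-1)$ copies of $c$ plus one type-$(r+2,2)$ cycle''. Everything else is a direct transcription of the $p=2$ argument, with $\cc^p=0$ and $\image\cc^{p-1}\subseteq\al$ playing the roles of $\cc^2=0$ and $\image\cc\subseteq\al$.
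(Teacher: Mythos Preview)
Your proposal is correct and follows essentially the same approach as the paper: the same cycle $E_{p-1}=c^{p-1}\circ e$ of type $((p-1)(r+1)+(r+2),2p)$, verification of hypotheses (i) and (ii) of Lemma \ref{uniqueness test} via the decomposition into $p$ degree-$2$ sub-cycles and the case analysis from Lemma \ref{epsilon}, and the conclusion from Proposition \ref{nilpotency test}. The paper in fact treats the decomposition in (i) without further comment, so your flagged ``main obstacle'' is not a real difficulty here.
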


\begin{proof}
Let $r$ be the isotropy order of $\v$, and  consider the diagram \eqref{diag:second return}.
Let $E_{p-1}$ be the cycle on $\v$ given by $c^{p-1} \circ e$\,;
this cycle is of type $(\ell(E_{p-1}), 2p)$ where $\ell(E_{p-1})= (p-1)(r+1)+(r+2)$.
 If $E_{p-1}$ is zero, there is nothing to prove; so assume that it is non-zero.  We show that $E_{p-1}$ satisfies the
conditions of Lemma \ref{uniqueness test}, namely that
(i) any  non-zero  cycle on $\v$ of degree $2p$ has length at least $\ell(E_{p-1})$;
(ii)  any  cycle on $\v$ of type $(\ell(E_{p-1}), 2p)$  which is non-zero on $\al$ is equal to $E_{p-1}$.
Then the result follows by Proposition \ref{nilpotency test}.

For (i), note that $B$ is the composition of $p$  non-zero cycles on $\v$ of degree $2$.  Since $\cc^p=0$, at most $p-1$ of these can be $c$, and the others have length at least $r+2$, making $\ell(B) \geq \ell(E_{p-1})$ as desired.

For (ii), such a  cycle $B$ is the composition of $p-1$ $c$'s and one cycle $\ep$ as described in Lemma \ref{epsilon}. Since $\cc|_{\al} =0$, $B$ must be $c^{p-1} \circ \ep$.
If $\ep = u \circ c$ then $B=0$ by $\cc|_{\al}=0$.  If $\ep = c \circ u$ or $\widehat c$ then, recalling from Lemma \ref{epsilon} that $\image\widehat\cc \subseteq \image \cc$,  $B=0$ by $\cc^p=0$.	
Hence $\ep=e$ and $B=E_{p-1}$.
\end{proof}

The next two lemmas do not require finite uniton number.

\begin{lemma} \label{ker c^i}
Let $\v:M \to G_k(\C^n)$ be harmonic of finite isotropy order $r$ with first return map $\cc$ of  finite nilorder  $p \geq 2$.
Then the following holds$:$
\begin{enumerate}
\item[{\rm (i)}]  With $\cc^0$ equal to the identity map, we have a nested sequence of holomorphic subbundles of $\v$\,$:$
\begin{equation*} 
\v = \ker \cc^p \supset \cdots \supset \ker \cc^{i+1} \supset \ker \cc^i \supset \cdots \supset \ker \cc \supset  \ker \cc^0 = {\mathbf 0}.
\end{equation*}
\item[{\rm (ii)}] For $i=0,1,\ldots, p-1$,
$\cc$ maps $\ker \cc^{i+1}$ into $\ker \cc^i$ and
$\cc^{-1}(\ker \cc^i) = \ker \cc^{i+1}$.
\item[{\rm (iii)}] Set $\ga^i = \ker \cc^{i+1} \ominus \ker \cc^i$ \ $(i=0,1,\ldots, p-1)$ so that
$\v = \bigoplus_{i=0}^{p-1} \ga^i$.
Then, for $1 \leq i \leq p-1$, $\pi_{\ga^{i-1}} \circ \cc$ maps $\ga^i$ into $\ga^{i-1}$ injectively.  Hence the dimensions of the $\ga^i$ form a decreasing sequence of positive numbers.
\item[{\rm (iv)}] The map $\cc^{p-1}$ restricts to an isomorphism (away from isolated points) from $\ga^{p-1} = \v \ominus \ker \cc^{p-1}$ to $\al:= \image \cc^{p-1}$.  Further $\al \subseteq \ga^0 = \ker \cc$.
\item[{\rm (v)}] The kernel of the composition $\v \to G'(\v) \to \cdots \to G^{(r)}(\v)$ lies in $\ga^0 = \ker \cc$.
\end{enumerate}
\end{lemma}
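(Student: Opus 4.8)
The plan is to reduce everything to the single fact, recalled just after diagram \eqref{diag:first return0}, that the first return map $\cc:\v \to \v$ is a holomorphic cycle, hence a holomorphic endomorphism of the holomorphic bundle $\v$; consequently every power $\cc^i$ is holomorphic. The kernel and image of a holomorphic bundle endomorphism are holomorphic subbundles off the discrete set where the rank drops, and extend to holomorphic subbundles of $\v$ over all of $M$ by filling out zeros as in \cite[Proposition 2.2]{burstall-wood}; I apply this to each $\cc^i$, and in (v) to the composition $\v \to G'(\v) \to \cdots \to G^{(r)}(\v)$. Everything else is the elementary linear algebra of the kernel filtration of a nilpotent endomorphism, carried out fibrewise at a generic point of $M$ and then extended by continuity.

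For (i), the inclusions $\ker \cc^i \subseteq \ker \cc^{i+1}$ are immediate, $\ker \cc^0 = {\mathbf 0}$, and $\ker \cc^p = \v$ since $\cc^p = 0$. The only point needing an argument is strictness: the nilorder being \emph{exactly} $p$ means $\cc^{p-1}$ is a non-zero bundle map, so at a generic point $z$ the endomorphism $\cc_z$ has nilpotency index exactly $p$; the standard stabilization fact, that $\ker \cc_z^i = \ker \cc_z^{i+1}$ implies $\ker \cc_z^j = \ker \cc_z^i$ for all $j \geq i$, then forces each inclusion $\ker \cc_z^i \subsetneq \ker \cc_z^{i+1}$ $(0 \leq i < p)$ to be strict at generic $z$, hence strict as subbundles. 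Part (ii) is the pair of fibrewise identities $\cc^i \circ \cc = \cc^{i+1}$, giving $\cc(\ker \cc^{i+1}) \subseteq \ker \cc^i$, and $\cc^i(\cc s) = 0 \iff \cc^{i+1} s = 0$, giving $\cc^{-1}(\ker \cc^i) = \ker \cc^{i+1}$, both extended by continuity.

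For (iii), part (i) gives the orthogonal decomposition $\v = \bigoplus_{i=0}^{p-1} \ga^i$ with every $\ga^i$ non-zero, while part (ii) gives $\cc(\ga^i) \subseteq \ker \cc^i = \ker \cc^{i-1} \oplus \ga^{i-1}$, so $\pi_{\ga^{i-1}} \circ \cc$ indeed sends $\ga^i$ into $\ga^{i-1}$; it is injective at a generic point because $\pi_{\ga^{i-1}}(\cc s) = 0$ with $s \in \ga^i$ forces $\cc s \in \ker \cc^{i-1}$, hence $\cc^i s = 0$, hence $s \in \ga^i \cap \ker \cc^i = {\mathbf 0}$. Therefore $\rk \ga^i \leq \rk \ga^{i-1}$ and $\rk \ga^0 \geq \rk \ga^1 \geq \cdots \geq \rk \ga^{p-1} \geq 1$. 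For (iv), $\v = \ker \cc^{p-1} \oplus \ga^{p-1}$ and $\cc^{p-1}$ annihilates $\ker \cc^{p-1}$, so $\al := \image \cc^{p-1} = \cc^{p-1}(\ga^{p-1})$ and $\cc^{p-1}|_{\ga^{p-1}}$ is injective at generic points (since $\ga^{p-1} \cap \ker \cc^{p-1} = {\mathbf 0}$), hence a fibrewise isomorphism $\ga^{p-1} \to \al$ away from isolated points; and $\cc(\al) = \cc^p(\v) = {\mathbf 0}$ shows $\al \subseteq \ker \cc = \ga^0$. For (v), writing $\Psi$ for the composition $\v \to G'(\v) \to \cdots \to G^{(r)}(\v)$, formula \eqref{first returnc} reads $\cc = A'_{G^{(r)}(\v),\v} \circ \Psi$, so $\ker \Psi \subseteq \ker \cc = \ga^0$.

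The only genuine care will be in the passage from generic fibres to global holomorphic subbundles: one must check that the kernels $\ker \cc^i$, the image $\image \cc^{p-1}$, and the orthogonal complements $\ga^i$ really fill out to honest holomorphic subbundles, and that the injectivity and isomorphism statements are read as holding away from an isolated set --- all routine given \cite[Proposition 2.2]{burstall-wood}. The underlying algebra is elementary, so I do not expect any substantive obstacle.
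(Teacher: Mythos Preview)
Your proof is correct and follows exactly the approach the paper takes: the paper's own proof is the single sentence ``The holomorphicity follows from the holomorphicity of $\cc$; the rest is linear algebra,'' and you have simply spelled out that linear algebra in detail. One minor quibble: in your closing paragraph you say the $\ga^i$ fill out to \emph{holomorphic} subbundles, but as orthogonal complements they are only smooth subbundles in general --- this does not affect anything, since the lemma makes no holomorphicity claim for the $\ga^i$.
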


\begin{proof}
The holomorphicity follows from the holomorphicity of $\cc$; the rest is linear algebra.
\end{proof}

 We now give a version of Lemma \ref{reduce rank} for $p>2$. Note that the formula for $\wt \cc_r$ is more complicated.
 We define the $\al_i$ and $R$ as in Example \ref{ex:c2=0};
 note that we must add an arrow from $\be_r$ to $\be$ to  diagram \eqref{diag:second return2}; however we will not use this diagram.

\begin{lemma} \label{reducenilorder}
Let $\v:M \to G_k(\C^n)$ be a harmonic map of isotropy order $r$ \ ($r \geq 1$) with first return map $c$ of nilorder  $p>2$.  Let $\al$ be a holomorphic subbundle of $\v$ with
$\image \cc^{p-1} \subseteq \al \subseteq \ker \cc$.

Let $\wt\v$ be obtained from $\v$ by forward replacing $\al$, thus $\wt\v = \be \oplus \al_1$ where $\be = \v \ominus \al$.
Define $\wt \cc_r:\wt\v \to \wt\v$ by $\wt \cc_r|_{\be}
 = \pi_\beta\circ \cc|_{\be}$ and $\wt \cc_r|_{\al_1}= $
 the composition $\al_1 \to \cdots \to \al_r \to R \to \be$;
 note that $\wt \cc_r(\al_1) = \pi_{\be} \circ \ee(\al)$.
Then $\wt\v$ has isotropy order exactly $r$, and $\wt\cc_r$ is the first return map
 of $\wt\v$. Moreover, $\wt \cc_r$ has nilorder at most $p$
 and $\image\wt \cc_r^{p-1} = \wt \cc_r^{p-1}(\al_1)=\pi_{\be}\circ \cc^{p-2} \circ \ee(\al)$.
\end{lemma}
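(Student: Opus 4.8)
The plan is to run the same argument as for Lemma \ref{reduce rank}, but now tracking the extra internal arrow $\be_r \to \be$ which is present because $\cc$ has nilorder $p > 2$ rather than $2$. First I would add the missing arrow $\be_r \to \be$ (representing $\pi_\be \circ A'_{G^{(r)}(\v), \v}|_{\be_r}$) to diagram \eqref{diag:second return2}, so that the diagram genuinely records all the second fundamental forms of the refinement of \eqref{diag:second return} coming from the splitting $G^{(i)}(\v) = \al_i \oplus \be_i$ (with $\al_i$, $\be_i$, $R$ as in Example \ref{ex:c2=0}). Then forward replacement of $\al$ produces $\wt\v = \be \oplus \al_1$; the Gauss bundles of $\wt\v$ are read off from the diagram, and since $A'_\v, A'_{G^{(1)}(\v)}, \ldots, A'_{G^{(r-1)}(\v)}$ are surjective, one checks $G^{(i)}(\wt\v)$ for $1 \le i \le r$ is orthogonal to $\wt\v$ exactly as in the nilorder-$2$ case, while $G^{(r)}(\wt\v)$ is \emph{not} in general orthogonal to $\wt\v$: the return path of $\wt\v$ is $\al_1 \to \cdots \to \al_r \to R \to \be$ composed with nothing, together with the internal piece $\pi_\be \circ \cc|_\be$ on $\be$ itself. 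This gives isotropy order exactly $r$ and identifies $\wt\cc_r$ as stated.

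Next I would compute the nilorder of $\wt\cc_r$. Because $\wt\cc_r|_\be = \pi_\be \circ \cc|_\be$ and $\wt\cc_r|_{\al_1} = \pi_\be \circ \ee(\al)$ lands in $\be$ (indeed in $\image(\pi_\be \circ \ee|_\al)$), iterating $\wt\cc_r$ amounts, after the first step, to iterating $\pi_\be \circ \cc|_\be$ on $\be$. The key linear-algebra point is that $\be = \v \ominus \al$ with $\image \cc^{p-1} \subseteq \al \subseteq \ker\cc$, so $\pi_\be \circ \cc|_\be$ behaves like $\cc$ modulo $\al$; since $\cc^p = 0$ and $\cc(\ker\cc) = 0$ absorbs one factor, $(\pi_\be \circ \cc|_\be)^{p-1}$ already maps $\be$ into $\image\cc^{p-1} \cap \be \subseteq \al \cap \be = 0$ — wait, more carefully, $(\pi_\be \circ \cc)^{j}$ differs from $\pi_\be \circ \cc^{j}|_\be$ only by terms that pass through $\al \subseteq \ker\cc$ and hence die, so $(\pi_\be\circ\cc|_\be)^{p-1} = \pi_\be\circ\cc^{p-1}|_\be$, whose image is in $\image\cc^{p-1} \subseteq \al$, hence killed by one more $\pi_\be$. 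Thus $(\pi_\be\circ\cc|_\be)^{p} = 0$, giving $\wt\cc_r^{\,p} = 0$ overall, i.e. nilorder at most $p$. Finally, for the image of the top power: $\wt\cc_r^{\,p-1}|_\be = 0$ by the above, so $\image\wt\cc_r^{\,p-1} = \wt\cc_r^{\,p-1}(\al_1)$; applying $\wt\cc_r$ once sends $\al_1$ into $\be$ via $\pi_\be\circ\ee|_\al$, and then $p-2$ further applications act as $\pi_\be\circ\cc^{p-2}|_\be$, and since everything stays "mod $\ker\cc$" one gets $\wt\cc_r^{\,p-1}(\al_1) = \pi_\be \circ \cc^{p-2}\circ\ee(\al)$, as claimed.

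The routine part is the diagram-chase identifying $\wt\cc_r$ and the isotropy order; that is formally identical to Lemma \ref{reduce rank} once the arrow $\be_r\to\be$ is in place, and I would simply say "the result is clear from diagram \eqref{diag:second return2} with the arrow $\be_r \to \be$ added" for that portion. The part that needs genuine care — and which I expect to be the main obstacle — is the bookkeeping showing $(\pi_\be \circ \cc|_\be)^{j} = \pi_\be \circ \cc^{j}|_\be$ and hence nilorder $\le p$ and the precise formula $\image\wt\cc_r^{\,p-1} = \pi_\be\circ\cc^{p-2}\circ\ee(\al)$: here one must use the hypothesis $\image\cc^{p-1}\subseteq\al\subseteq\ker\cc$ at each step to discard the cross terms, and one should invoke Lemma \ref{ker c^i} to control how $\cc$ interacts with the filtration $\ker\cc^i$ when $\al$ sits strictly between $\image\cc^{p-1}$ and $\ker\cc$. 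I would organize this as a short induction on $j$ proving $\pi_\be\circ(\pi_\be\circ\cc|_\be)^j = \pi_\be\circ\cc^j|_\be$ modulo operators factoring through $\ker\cc$, the base case $j=1$ being trivial and the inductive step using $\cc(\al)\subseteq\cc(\ker\cc)=0$.
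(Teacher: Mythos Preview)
Your proposal is correct and follows essentially the same line as the paper's proof. The paper sets up a more elaborate refinement (splitting $\be$ further according to the filtration $\ker\cc^i$, giving diagram \eqref{diag:second returnp}), but then uses it only to read off the same two facts you isolate algebraically: $\wt\cc_r^{\,j}|_{\be} = \pi_{\be}\circ\cc^{\,j}|_{\be}$ (the paper states this without proof; your induction using $\cc(\al)\subseteq\cc(\ker\cc)=0$ is exactly the justification) and $\wt\cc_r^{\,p-1}(\al_1)=\pi_{\be}\circ\cc^{p-2}\circ\ee(\al)$ via $\al\subseteq\ker\cc\subseteq\ker\cc^{p-2}$.

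One small point to tighten: for ``isotropy order exactly $r$'' you need $\wt\cc_r\neq 0$, not merely ``not in general orthogonal''. The paper observes that if $\pi_{\be}\circ\cc|_{\be}=0$ then $\cc(\be)\subseteq\al\subseteq\ker\cc$, whence $\cc^2=0$, contradicting $p>2$; you should include this one-line check. Also note that in your nilorder paragraph you already have $(\pi_{\be}\circ\cc|_{\be})^{p-1}=\pi_{\be}\circ\cc^{p-1}|_{\be}=0$ directly (since $\image\cc^{p-1}\subseteq\al$ is killed by the outer $\pi_{\be}$), so the extra ``one more $\pi_{\be}$'' step is unnecessary.
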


\begin{proof} Consider the nested sequence of holomorphic subbundles of $\v$:
 \begin{equation}\label{nestednilg}
 \v = \ker \cc^p \supset \cdots \supset \ker \cc^{i+1} \supset \ker \cc^i \supset \cdots \supset \ker \cc \supset \alpha \supset 0.
 \end{equation}
Set $\delta=\ker \cc \ominus \alpha$ and, as in Lemma \ref{ker c^i}, define the subbundles $\ga^i = \ker \cc^{i+1} \ominus \ker \cc^i$ \ $(i=0,\ldots, p-1)$ so that
$\ga^0=\alpha\oplus \delta$ and  $\v = \bigoplus_{i=0}^{p-1} \ga^i$.

Set $\gamma_0^i = \gamma^i$ \ ($i=0, 1,\ldots, p-1$),
 and define subbundles $\ga^i_{j}$ of $G^{(j)}(\v)$ for $j=1,\ldots, r$ inductively by
$\bigoplus_{k \leq i}\ga^k_{j}
= \image A'_{G^{(j-1)}(\v)}|_{\bigoplus_{k \leq i} \ga^k_{j-1}}$;  note that each $\bigoplus_{k \leq i}\ga^k_{j}$ is a holomorphic subbundle of $G^{(j)}(\v)$ and
$G^{(j)}(\v)=\bigoplus_{i=0}^{p-1}\ga^i_{j}$.  Finally, set $\delta_j = \ga^0_j \ominus \al_j$ so that $\delta_0 = \delta$.

From Lemma \ref{ker c^i}(v), some  $\alpha_j$ and $\delta_j$ may be zero; however, for $i\geq 1$,   all the $\gamma_j^i$ are non-zero,  in fact the compositions $ \ga_0^i \to  \cdots \to \ga^i_k$
with $0 < k \leq r$ are isomorphisms (away from isolated points).

 We have now the following
 refinement of diagram \eqref{diag:second return}:
 \begin{equation}\label{diag:second returnp}
\begin{gathered}
\xymatrixrowsep{0.6pc}
\xymatrixcolsep{1.7pc}
\xymatrix{\v & G^{(1)}(\v) & \ldots & G^{(r)}(\v) & R
	 &\v \\
\gamma^{p-1}  \ar[r] & \ar[r] & \ar[r] & \gamma^{p-1}_r  \ar@{-}[r]\ar[llld]\ar[llldd]\ar[lllddd]\ar[llldddd] & \ar@{-}[r]
	 & \gamma^{p-1} &
\\
\vdots  \ar[r]\ar@{-}[u] & \ar[r]\vdots \ar[r]\ar@{-}[u] &\ar[r]\vdots \ar[r]\ar@{-}[u] &\vdots \ar@{-}[r]\ar@{-}[u]\ar[llld]\ar[llldd]\ar[lllddd] &  \vdots \ar@{-}[r]\ar@{-}[u]
	 &\vdots\ar@{-}[u] &
\\
\gamma^1  \ar[r]\ar@{-}[u] &  \ar[r]\ar@{-}[u] & \ar[r]\ar@{-}[u] & \gamma^1_r  \ar@{-}[r]\ar@{-}[u]\ar[llldd]\ar[llld] &   \ar@{-}[r]\ar@{-}[u]
	 &\gamma^1 \ar@{-}[u] &
\\
\delta \ar@{-}[r]\ar@{-}[u] &  \ar@{-}[r]\ar@{-}[u] & \ar@{-}[r]\ar@{-}[u] & \delta_r  \ar@{-}[r]\ar@{-}[u] & \ar@{-}[r]\ar@{-}[u]
	 &\delta\ar@{-}[u]&
\\
\alpha  \ar[r]\ar@{-}[u] & \alpha_1 \ar[r] \ar@{-}[u] & \ar[r]\ar@{-}[u] & \alpha_r  \ar[r]\ar@{-}[u] &\alpha_{r+1} \ar@{-}[r]\ar@{-}[u]\ar[ruuuu]\ar[ruuu]\ar[ruu]\ar[ru]
	 &\alpha\ar@{-}[u]&
}
\vspace{2.5ex}
\end{gathered} \end{equation}
where we repeat the first column and set $\alpha_{r+1}=\image A'_{G^{(r)}(\v)}|_{\alpha_{r}}$.  Note that only the arrows which have a possible non-zero contribution to $\wt \cc_r$ and $\wt \cc_r^{p-1}$ are shown.
 The result is now clear from this diagram, noting that $\beta=  \v \ominus \al =
\delta\oplus \bigoplus_{i=1}^{p-1}\ga^i$. 
More precisely, $\wt \cc_r$ given in the statement of the theorem is non-zero on $\beta$,  since, if $\pi_{\be} \circ c|_{\be}$ were zero, $c^2$ would be zero; alternatively note that, by  Lemma \ref{ker c^i}(iii), the arrows $\gamma_r^i\to \gamma^{i-1}$ give injections.
Thus $\wt\v$ has isotropy order $r$ and $\wt \cc_r$ is clearly
its first return map.

Next, since
$\wt \cc_r|_{\be} = \pi_{\be} \circ \cc|_{\be}$, we have
$\wt \cc_r^{p-1}|_\be =  \pi_{\be} \circ \cc^{p-1}|_{\be}$,
which is zero from $\image \cc^{p-1} \subseteq \al = \v \ominus \be$.
On the other hand, 
$\wt \cc_r^{p-1}(\al_1)
= \wt \cc_r^{p-2} \circ \pi_{\be} \circ \ee(\al)
= \pi_{\be} \circ \cc^{p-2} \circ \pi_{\be} \circ \ee(\al)
= \pi_{\be} \circ \cc^{p-2} \circ  \ee(\al)$, since $\al \subseteq \ker \cc \subseteq \ker \cc^{p-2}(\al)$ when $p >2$.
\end{proof}

 We now use these lemmas to simplify some more harmonic maps.

\begin{proposition} \label{Gk nil k,k-1}
Let $\v:M \to G_k(\C^n)$ be a harmonic map of finite uniton number and finite isotropy order, with first return map $\cc$ of nilorder $p >2$ where $p=k$ or $k-1$.
Then there exists a holomorphic subbundle $\al$ of $\v$
such that forward replacing $\al$ decreases the nilorder of $c$.
\end{proposition}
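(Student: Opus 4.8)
The plan is to reduce everything to a single application of Lemma~\ref{reducenilorder}. That lemma says: if $\al$ is any holomorphic subbundle of $\v$ with $\image\cc^{p-1}\subseteq\al\subseteq\ker\cc$, then forward replacing $\al$ yields a harmonic map $\wt\v$ of isotropy order exactly $r$ whose first return map $\wt\cc_r$ has nilorder at most $p$ and satisfies $\image\wt\cc_r^{p-1}=\pi_\be\circ\cc^{p-2}\circ\ee(\al)$, where $\be=\v\ominus\al$. Hence it is enough to exhibit such an $\al$ with the extra property $\cc^{p-2}(\ee(\al))\subseteq\al$: then $\pi_\be\circ\cc^{p-2}\circ\ee(\al)=0$, so $\wt\cc_r^{p-1}=0$ and $\wt\v$ has first return map of nilorder at most $p-1<p$, which is exactly the assertion of the proposition.

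The candidate for $\al$ is forced by a dimension count. By Lemma~\ref{ker c^i}(iii) the ranks of the subbundles $\ga^i=\ker\cc^{i+1}\ominus\ker\cc^i$ $(0\le i\le p-1)$ form a non-increasing sequence of positive integers summing to $\rk\v=k$; since $p=k$ or $k-1$, this forces $\rk\ga^i=1$ for every $i\ge 1$. In particular $\al:=\image\cc^{p-1}$ is, by Lemma~\ref{ker c^i}(iv), a holomorphic \emph{line} subbundle of $\v$ contained in $\ker\cc$, so the hypotheses of Proposition~\ref{nilorder p} and Lemma~\ref{reducenilorder} are met (using $p>2$). Applying Proposition~\ref{nilorder p} with this $\al$, the bundle map $\cc^{p-1}\circ\ee|_\al$ is nilpotent; being an endomorphism of a line bundle it vanishes, so $\ee(\al)\subseteq\ker\cc^{p-1}$. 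It therefore remains to check $\cc^{p-2}(\ker\cc^{p-1})\subseteq\al$, and in fact $\cc^{p-2}(\ker\cc^{p-1})=\image\cc^{p-1}=\al$. Since $\cc^{j+1}$ annihilates $\ga^j$ and $\ker\cc^{p-1}=\bigoplus_{j=0}^{p-2}\ga^j$, we have $\cc^{p-2}(\ker\cc^{p-1})=\cc^{p-2}(\ga^{p-2})$. Moreover, as $\cc$ maps $\ga^i$ into $\ga^0\oplus\cdots\oplus\ga^{i-1}$, passing from $\ga^{p-2}$ (resp.\ $\ga^{p-1}$) down to $\ga^0$ in $p-2$ (resp.\ $p-1$) applications of $\cc$ must drop the index by exactly one at each step; hence $\cc^{p-2}|_{\ga^{p-2}}$ and $\cc^{p-1}|_{\ga^{p-1}}$ are the compositions of the ``top components'' $\pi_{\ga^{i-1}}\circ\cc|_{\ga^i}$, and those with $i\ge 2$ (landing in the rank-one bundles $\ga^1,\dots,\ga^{p-2}$) are injections of rank-one bundles, hence surjective away from isolated points. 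Comparing the two compositions gives $\cc^{p-2}(\ga^{p-2})=\image(\pi_{\ga^0}\circ\cc|_{\ga^1})=\cc^{p-1}(\ga^{p-1})=\image\cc^{p-1}=\al$, using $\v=\ker\cc^{p-1}\oplus\ga^{p-1}$ for the last equality. Thus $\cc^{p-2}(\ee(\al))\subseteq\al$, and we finish as in the first paragraph.

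The one genuinely delicate step is the identity $\cc^{p-2}(\ker\cc^{p-1})=\image\cc^{p-1}$, i.e.\ that the line bundle $\al=\image\cc^{p-1}$ is closed under $\cc^{p-2}\circ\ee$; this is precisely where the hypothesis $p\in\{k-1,k\}$ is used, since it is the rank-one-ness of $\ga^1,\dots,\ga^{p-2}$ that makes the intermediate top-component maps surjective and pins down $\image(\cc^{p-2}|_{\ga^{p-2}})$. For a larger gap between $p$ and $k$ this identity can fail and the explicit choice $\al=\image\cc^{p-1}$ need not work, so a different (iterative) construction of $\al$ would be required; that is why the statement is confined to $p=k$ and $p=k-1$.
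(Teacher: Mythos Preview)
Your proof is correct and follows the same overall architecture as the paper: take $\al=\image\cc^{p-1}$, use Proposition~\ref{nilorder p} to get $\ee(\al)\subseteq\ker\cc^{p-1}$, show $\cc^{p-2}(\ee(\al))\subseteq\al$, and finish with Lemma~\ref{reducenilorder}.

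The one point of divergence is the key inclusion $\cc^{p-2}(\ker\cc^{p-1})\subseteq\al$. The paper establishes this by a short rank count: it proves the identity $\ker\cc^{p-1}=\ker\cc^{p-2}+\image\cc$ by comparing the ranks of the two sides in each of the cases $p=k$ and $p=k-1$, and then observes $\cc^{p-2}(\image\cc)\subseteq\image\cc^{p-1}=\al$. Your route is instead to analyse $\cc$ as a strictly block-lower-triangular map for the filtration $\{\ker\cc^i\}$ and observe that $\cc^{p-2}|_{\ga^{p-2}}$ and $\cc^{p-1}|_{\ga^{p-1}}$ both reduce to the composition of the top components $\pi_{\ga^{i-1}}\circ\cc|_{\ga^i}$, which are isomorphisms for $i\ge 2$ since all intermediate $\ga^i$ are line bundles. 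Both arguments pinpoint the same place where the hypothesis $p\in\{k-1,k\}$ enters; the paper's version is a little quicker, while yours gives a more explicit picture of why the images coincide and makes clearer exactly which ``top-component'' map $\pi_{\ga^0}\circ\cc|_{\ga^1}$ determines $\al$.
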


\begin{proof}
 We use the notation of Lemmas \ref{ker c^i} and \ref{reducenilorder}; in particular,  $\al = \image \cc^{p-1}$ and $\ga^{p-1} = \v \ominus \ker \cc^{p-1}$. By Lemma \ref{ker c^i}, $\ga^{p-1}$, and so $\al$, is of rank one, as are all $\ga^i$, except that $\ga^0  =\ker \cc$ is of rank $2$ when $p=k-1$.

By Proposition \ref{nilorder p}, $\cc^{p-1} \circ \ee:\al \to \al$ is nilpotent and so zero; hence  $\ee(\al) \subseteq \ker \cc^{p-1}$.
We claim that
\begin{equation} \label{sum}
\ker \cc^{p-1} = \ker \cc^{p-2} + \image \cc\,.
\end{equation}
To see this note that the right-hand side is certainly a subset of the left-hand side.
{}From $\rk(\image \cc) + \rk(\ker \cc) = k$ we get that
$\rk(\image \cc)$ equals $k-1$ if $p=k$, or $k-2$ if $p=k-1$.

\emph{Case (a).} If $p=k$, $\image \cc$ and $\ker \cc^{p-1}$ both have rank $k-1$, so they must be equal and \eqref{sum} trivially holds.

\emph{Case (b).} If $p=k-1$, then $\image \cc$ and $\ker \cc^{p-2}$ both have rank $k-2$, but $\ker \cc^{p-1}$ has rank $k-1$.
Suppose that \eqref{sum} does not hold.  Then $\image \cc$ and $\ker \cc^{p-2}$ and their sum must all have rank $k-2$ and so are equal.  But this  implies that $\image \cc \subseteq \ker \cc^{p-2}$, i.e.\  $\cc^{p-1}=0$, in contradiction to the nilorder being $p$.
Hence \eqref{sum} holds.

In both cases, it follows that  $\ee(\al) \subseteq \ker \cc^{p-2} + \image \cc$,
hence $\cc^{p-2} \circ \ee(\al) \subseteq \cc^{p-2}(\image \cc) \subseteq \image \cc^{p-1} = \al$.
 Denote the isotropy order of $\v$ by $r$ and
define $\wt \cc_r$  as in Lemma \ref{reducenilorder}.
  Then $\wt \cc_r$ is non-zero, $\wt\v$ has isotropy order exactly $r$, and $\wt \cc_r$ is the first return map of $\wt\v$. By Lemma \ref{reducenilorder},
$\wt \cc_r^{p-1}$ has image $\wt \cc_r^{p-1}(\al_1)
= \pi_{\be} \circ \cc^{p-2} \circ \ee(\al) =0$, so that
$\wt \cc_r^{p-1} = 0$;
thus, forward replacing $\al$ decreases the nilorder of $\cc$.
\end{proof}

As before we can combine cycles of the above type to obtain a new nilpotent cycle, cf.\ Proposition \ref{cece^2}.

\begin{proposition} \label{c^2ece}
Let $\v:M \to G_5(\C^k)$ be a harmonic map of finite uniton number and finite isotropy order, with first return map $c$ of nilorder 3. Set $\al = \image \cc^2$.   Then the bundle map defined by
$$
\GG= \cc^2 \circ \ee \circ \cc \circ \ee|_{\al}:\al \to \al
$$
is nilpotent, and so is zero.
\end{proposition}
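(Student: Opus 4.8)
The plan is to apply Proposition \ref{nilpotency test} with $\psi = \v$ to the cycle $G := c^2 \circ e \circ c \circ e$ on $\v$ in the diagram \eqref{diag:second return}, taking the subbundle to be $\al = \image\cc^2$; note that $\al \subseteq \ker \cc$ by Lemma \ref{ker c^i}(iv), so that $\al$ is a legitimate choice, and the corresponding operator $\GG = \cc^2 \circ \ee \circ \cc \circ \ee|_\al$ sends (sections of) $\al$ to $\al$ since $\image\cc^2 = \al$. The cycle $G$ has degree $10$ (five external cycles of degree $2$: two copies of $c$ counted with multiplicity three for $c^2$ and two for $e$... more precisely $c^2$ contributes two $c$'s and $e\circ e$... wait, $G = c^2 \circ e \circ c \circ e$ has three $c$'s and two $e$'s, hence degree $2\cdot 3 + 2\cdot 2 = 10$) and length $\ell(G) = 3(r+1) + 2(r+2)$. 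As in Propositions \ref{ce^s} and \ref{cece^2}, I would verify the hypotheses of Lemma \ref{uniqueness test} with this $\al$, namely: (i) any non-zero cycle on $\v$ of degree $10$ has length at least $\ell(G)$; and (ii) any cycle on $\v$ of type $(\ell(G), 10)$ is zero on $\al$ or equal to $G$. Granting these, Proposition \ref{nilpotency test} gives nilpotency of $\GG$; since $\al$ has rank one (Lemma \ref{ker c^i}(iv) gives $\rk\al = \rk\ga^{2} = 1$ because $p = 3 = k - 2$ forces the $\ga^i$ to have ranks $1,1,1$... actually with $k = 5$, $p = 3$ the ranks of $\ga^0,\ga^1,\ga^2$ are a decreasing sequence of positive integers summing to $5$, so they are $3,1,1$, whence $\al = \image\cc^2 = \ga^2$ has rank one by (iv)), nilpotency forces $\GG = 0$ by Remark \ref{rem:nilpotency test}(i).

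First I would establish some structural facts about $\cc$ analogous to \eqref{(ce)^2}: since $\cc^3 = 0$, consecutive $c$'s in any non-zero cycle must be separated by a cycle $\ep$ of type $(r+2,2)$, and at most two $c$'s may be adjacent (as $c^3 = 0$ but a priori $c \circ \ep \circ c$ or $c^2 \circ \ep$ need not vanish). I would also record that, by Lemma \ref{ker c^i}, $\cc$ maps $\ker\cc^{i+1}$ to $\ker\cc^i$, that $\cc|_\al = 0$ (as $\al \subseteq \ker\cc$), and that the image of $\widehat\cc$ (for any $\widehat c$ as in Lemma \ref{epsilon}) lies in $\image\cc$, so that $\cc^2 \circ \widehat\cc = 0$ only when $\widehat\cc$'s image lands in $\ker\cc^2$; more carefully, $\image\widehat\cc \subseteq \image\cc$ gives $\cc^2 \circ \widehat\cc$ has image in $\cc^2(\image\cc) = \image\cc^3 = 0$. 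These observations are the workhorses for eliminating the non-$e$ possibilities for the intermediate cycles $\ep$ in (i) and (ii).

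For step (i): a non-zero cycle $B$ of degree $10$ is a composition of five non-zero degree-$2$ cycles, each either $c$ or a cycle $\ep$ of length $\geq r+2$. Because $\cc^3 = 0$, no three $c$'s are adjacent, so the number of $c$'s is at most three (with the worst case $B = c \circ \ep_2 \circ c \circ \ep_1 \circ c$ or $B = c^2 \circ \ep_2 \circ c \circ \ep_1$ and similar), and one then checks the length bound case by case: if $B$ has $\leq 2$ $c$'s then at least three $\ep$'s of length $\geq r+2$ force $\ell(B) \geq 2(r+1) + 3(r+2) > \ell(G)$; and if $B$ has exactly three $c$'s then either $\ell(B) \geq \ell(G)$ outright, or the two $\ep$'s have length exactly $r+2$, are among the four types of Lemma \ref{epsilon}, and the structural facts force them both to be $e$, giving $B = c \circ e \circ c \circ e \circ c$ or a cyclic variant — but this is zero because $\cc^2 \circ \ee \circ \cc = \cc \circ (\cc \circ \ee \circ \cc)$ and $\cc \circ \ee \circ \cc$ maps into $\image\cc$ while... hmm, I need that $(\cc \circ \ee)^2(\image\cc) \subseteq$ something vanishing — actually here I would use that in the present setting $\cc$ is no longer nilorder $2$, so I must argue directly that $c \circ e \circ c \circ e \circ c = 0$ via a rank or kernel-flag count on $\al$; this is exactly the kind of routine-but-delicate bookkeeping I will do carefully. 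For step (ii): given a cycle $B$ of type $(\ell(G), 10)$ non-zero on $\al$, step (i) forces it to have exactly three $c$'s and two $\ep$'s of length $r+2$; then $\cc|_\al = 0$ rules out $B$ beginning with $c$, and working from the outside in using $\image\widehat\cc \subseteq \image\cc$, $\cc^3 = 0$, and $\al \subseteq \ker\cc$ successively pins each $\ep$ down to $e$ and the arrangement to $c^2 \circ e \circ c \circ e = G$.

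The main obstacle will be step (ii), and within it the careful elimination of the `mixed' possibilities where an intermediate $\ep$ is $c\circ u$ or $\widehat c$ rather than $e$ — precisely because the nilorder is $3$ (not $2$), the slack $\cc^2 \ne 0$ means several compositions that vanished automatically in Proposition \ref{cece^2} now require an explicit argument using the flag $\ker\cc \subset \ker\cc^2 \subset \v$ of Lemma \ref{ker c^i} and the injectivity statements in \ref{ker c^i}(iii). I expect the cleanest route is to track, for each candidate $B$, the image of $\al$ under successive sub-operators and show it is forced into $\ker\cc^2$ (hence killed by the final $\cc^2$) unless every $\ep$ is $e$; this mirrors the `$\image\BB \subseteq (\cc\circ\ee)^2(\image\cc) = 0$' endgame of Proposition \ref{cece^2}, with $(\cc\circ\ee)^2(\image\cc)$ replaced by the appropriate higher composition that vanishes by $\cc^3 = 0$.
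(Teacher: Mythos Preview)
Your overall strategy---applying Proposition \ref{nilpotency test} via Lemma \ref{uniqueness test} to $G = c^2 \circ e \circ c \circ e$ on the rank-one bundle $\al = \image\cc^2$---matches the paper's, and your handling of (ii) is in the right spirit. But there is a genuine gap in your treatment of (i).

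You write ``no three $c$'s are adjacent, so the number of $c$'s is at most three''. This inference is wrong: $B = c^2 \circ \ep \circ c^2$ has four $c$'s with no three adjacent. With $\ep$ of length $r+2$ this gives $\ell(B) = 4(r+1) + (r+2) = 5r+6 = \ell(G) - 1 < \ell(G)$, so to establish (i) you must show this $B$ is zero. For $\ep \in \{u\circ c,\, c\circ u,\, \widehat c\}$ this follows from $\cc^3 = 0$, but for $\ep = e$ you need $\cc^2 \circ \ee \circ \cc^2 = 0$, i.e., $\cc^2 \circ \ee|_\al = 0$. This is exactly what Proposition \ref{nilorder p} (with $p=3$) gives: $\cc^2 \circ \ee|_\al$ is nilpotent, hence zero since $\rk\al = 1$. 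You never invoke Proposition \ref{nilorder p}, so this case is unaccounted for. The same fact $\cc^2 \circ \ee|_\al = 0$ is also needed in (ii): in the arrangement $c \circ \ep_2 \circ c^2 \circ \ep_1$ with $\ep_1 = e$ you have $\cc^2 \circ \ee|_\al$ as the first three factors, and in $c^2 \circ \ep_2 \circ c \circ \ep_1$ with $\ep_2 = u\circ c$ you get $c^2 \circ u \circ c^2 \circ \ep_1$, which again requires it. Your ``kernel-flag tracking'' sketch does not supply this; the vanishing comes from the earlier nilpotency result, not from $\cc^3 = 0$ alone.

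A minor point: the ranks of $\ga^0,\ga^1,\ga^2$ need not be $3,1,1$---they could also be $2,2,1$ (both occur in Proposition \ref{G5 nil 3}). Either way $\ga^2$ has rank one, so your conclusion $\rk\al = 1$ is correct, but the reasoning should be adjusted.
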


\begin{proof}
 Let $r$ be the isotropy order of $\v$, and  consider the diagram \eqref{diag:second return}.
Let $G$ be the cycle on  $\v$ given by
$c^2 \circ e \circ c \circ e$.
   If $G$ is zero, there is nothing to prove. Otherwise,
 defining the $\ga^i$ as in Lemma \ref{ker c^i}, by part (iii) of that lemma, the ranks of the $\ga^i$ are decreasing for $i=0,1,2$ and add up to $5$. Hence $\ga^2 = \v \ominus \ker \cc^2$ has rank $1$; by Lemma \ref{ker c^i}(iv), $\al$ also has rank $1$.
 The cycle $G$ is of type $(\ell(G),10)$ where $\ell(G)=3(r+1)+2(r+2)$; we show that $G$ satisfies the conditions (i) and (ii) of Lemma \ref{uniqueness test}.

(i) Let $B$ be a  non-zero  cycle on $\v$ of degree 10; we must show that $\ell(B)\geq \ell(G)$. The cycle $B$ is the composition of five non-zero cycles on $\v$ of degree 2, each of which is $c$ or a cycle $\ep$ of length at least $r+2$.  Now, since $\cc^3=0$,  $B$  contains at most four $c$'s.   If it contains three or fewer $c$'s, then its length is at least $\ell(G$). If, instead, it contains four $c$'s, then $B=c^2\circ \ep\circ c^2$; further, if this $B$ has length less than $\ell(G)$, then $\ep$ has length $r+2$ and so is one of the cycles describe in Lemma \ref{epsilon}. By $\cc^3=0$, possibilities (2), (3) and (4) give
$ B=0$,  hence we must have $\ep=e$. But $\al$ has rank 1 and, by Proposition  \ref{nilorder p}, the  $\cc^2\circ \ee|_{\al}$ is nilpotent and so zero; thus, when $\ep=e$, we also have $B=0$.
Hence, $\ell(B)\geq \ell(G)$.

(ii) Let $B$ be a  cycle on $\v$ of type $( \ell(G),10)$  which is non-zero on $\al$. We show that $B=G$. As before, $B$ is the composition of three $c$'s and two cycles $\ep_i$ of length $r+2$. Since $\cc^3=0$ we have
\begin{equation} \label{al-ker}
\al = \image \cc^2 \subseteq \ker \cc ,
\end{equation}
and there are two possibilities: either (a) $B=c\circ \ep_2\circ c^2 \circ \ep_1$ or (b) $B=c^2\circ \ep_2\circ c \circ \ep_1$, where $\ep_1,\ep_2$ are as described in Lemma \ref{epsilon}.

In case (a), if $\ep_1$ equals $u\circ c$, $c\circ u$ or $\widehat c$, then $\BB|_{\al}=0$ by \eqref{al-ker}.  Moreover, if $\ep_1=e$ then $\BB|_{\al}=0$ because, as above, $\cc^2\circ \ee$ is zero on $\alpha$.

In case (b), if $\ep_2$ equals $c \circ u$ or $\widehat c$, then $\BB|_{\al}=0$ because $\cc^3=0$. If $\ep_2$ equals $u\circ c$, then  $B=c^2\circ u\circ c^2 \circ \ep_1$. However, as in case (a), all four possibilities for $\ep_1$ give $\BB|_{\al}=0$.
Hence we must have $\ep_2=e$, so $B=c^2\circ e\circ c \circ \ep_1$. Since $\image \cc\circ \widehat \cc  \subseteq \alpha$ and $\cc^2\circ \ee$ is zero on $\alpha$,
the cases $\ep_1=\widehat c$ or $c \circ u$ give $B|_{\al}=0$.
If $\ep_1 = u \circ c$, $\BB|_{\al}=0$ by \eqref{al-ker}.
Hence $\ep_1=e$, so $B=c^2\circ e\circ c \circ e=G$.
\end{proof}

We give an application of this.

\begin{proposition} \label{G5 nil 3}
Let $\v:M \to G_5(\C^n)$ be a harmonic map of finite uniton number and finite isotropy order, with
first return map $\cc$ of nilorder $3$.  Then $\cc$ has rank $2$ or $3$.
\begin{enumerate}
  \item[(a)] If $\cc$ has rank 2, then forward replacing $\image \cc^2 $  gives a harmonic map of the same isotropy order as $\v$ but with first return map of nilorder $2$.
\item[(b)] If $\cc$ has rank 3,  then forward replacing $\ker \cc$ gives a harmonic map of the same isotropy order as $\v$ but first return map of nilorder $3$ and rank less than $3$.
\end{enumerate}
\end{proposition}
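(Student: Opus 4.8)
The plan is to read off the rank dichotomy from the Jordan structure of the (nilpotent) first return map $\cc$, and then in each case to apply Lemma \ref{reducenilorder} to a well‑chosen forward replacement, the real work being to show that the new first return map $\wt\cc_r$ is as small as claimed. First, since $\cc$ has nilorder $p=3$, Lemma \ref{ker c^i}(iii) splits $\v=\ga^0\oplus\ga^1\oplus\ga^2$ with $\ga^i=\ker\cc^{i+1}\ominus\ker\cc^i$ of strictly positive, non‑increasing ranks summing to $5$, so $(\rk\ga^0,\rk\ga^1,\rk\ga^2)$ is $(3,1,1)$ or $(2,2,1)$; hence $\ker\cc$ has rank $3$ or $2$, i.e.\ $\cc$ has rank $2$ or $3$, and by Lemma \ref{ker c^i}(iv) the bundle $\image\cc^2$ is in either case a line subbundle of $\ker\cc$. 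For part (a) ($\rk\cc=2$) I would take $\al=\image\cc^2$ and $\be=\v\ominus\al$: by Proposition \ref{nilorder p} the map $\cc^2\circ\ee|_\al:\al\to\al$ is nilpotent, hence zero (being a line bundle), so $\ee(\al)\subseteq\ker\cc^2$; a rank count valid for $\rk\cc=2$ gives $\ker\cc^2=\ker\cc+\image\cc$ (both sides have rank $4$, as $\image\cc\cap\ker\cc$ has rank $1$, and $\ker\cc^2$ contains their sum), whence $\cc\circ\ee(\al)\subseteq\cc(\ker\cc+\image\cc)=\image\cc^2=\al$, i.e.\ $\pi_\be\circ\cc^{p-2}\circ\ee(\al)=0$. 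Feeding $\al$ into Lemma \ref{reducenilorder} then yields a harmonic map $\wt\v$ of the same isotropy order $r$ whose first return map $\wt\cc_r$ has nilorder at most $3$ with $\image\wt\cc_r^2=\pi_\be\circ\cc\circ\ee(\al)=0$, so $\wt\cc_r^2=0$; and $\wt\cc_r\neq0$ because $\cc^2\neq0$ (as in Lemma \ref{reducenilorder}), so $\wt\cc_r$ has nilorder exactly $2$, proving (a).

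For part (b) ($\rk\cc=3$) I would forward replace $\al=\ker\cc$ (rank $2$), obtaining $\wt\v=\be\oplus\al_1$ with $\be=\v\ominus\ker\cc=\ga^1\oplus\ga^2$, and apply Lemma \ref{reducenilorder}: $\wt\v$ keeps isotropy order $r$, $\wt\cc_r$ has nilorder at most $3$, $\image\wt\cc_r=\pi_\be\circ\cc(\be)+\pi_\be\circ\ee(\ker\cc)$, and $\image\wt\cc_r^2=\pi_\be\circ\cc\circ\ee(\ker\cc)$. The whole content is then the bound $\rk\wt\cc_r\leq2$. When $\rk\cc=3$ the linear algebra in $\C^5$ is rigid: $\image\cc^2$ is a line bundle, $\ker\cc\subseteq\image\cc$, and $\image\cc\subseteq\ker\cc^2=\ga^0\oplus\ga^1$; this already shows $\pi_\be\circ\cc(\be)=\pi_\be(\image\cc)$ is a \emph{single} line subbundle of $\ga^1$. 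For the $\ee$‑term, Proposition \ref{nilorder p} gives $\ee(\image\cc^2)\subseteq\ker\cc^2$, and Proposition \ref{c^2ece} gives $\ee(W)\subseteq\ker\cc^2$ where $W:=\cc\circ\ee(\image\cc^2)\subseteq\cc(\ker\cc^2)=\ker\cc$; so as soon as the line subbundles $\image\cc^2$ and $W$ span $\ker\cc$ we get $\ee(\ker\cc)\subseteq\ker\cc^2$, hence $\pi_\be\circ\ee(\ker\cc)\subseteq\ga^1$ and $\image\wt\cc_r\subseteq\ga^1$ has rank $\leq2$.

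The hard part will be the degenerate configurations of part (b), in which $W=0$ or $W=\image\cc^2$ so that $\image\cc^2$ and $W$ fail to span $\ker\cc$; there I would extract the remaining bound from the rigidity forced by the degeneracy together with Proposition \ref{nilorder p} applied this time to $\al=\ker\cc$ (which then only asserts that the rank‑$\leq2$ operator $\cc^2\circ\ee|_{\ker\cc}$ is nilpotent, hence of rank $\leq1$), and possibly a second use of Proposition \ref{c^2ece}, still aiming at $\rk\wt\cc_r\leq2$. Once that bound is in hand, $\wt\cc_r\neq0$ and $\wt\cc_r^3=0$ give the conclusion of (b): the first return map of $\wt\v$ has rank less than $3$ (and nilorder $3$ when $\ee(\ker\cc)\not\subseteq\ker\cc^2$, and nilorder $\leq2$ otherwise, which is an even stronger simplification). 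In all cases $\wt\v$ retains the isotropy order $r$ of $\v$ by Lemma \ref{reducenilorder}.
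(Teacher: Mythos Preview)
Your overall strategy matches the paper's, and your treatment of the rank dichotomy and of case (a) is correct; indeed your rank identity $\ker\cc^2=\ker\cc+\image\cc$ (valid when $\rk\cc=2$) is a clean way to reach the paper's conclusion $\cc\circ\ee(\image\cc^2)\subseteq\image\cc^2$, which the paper obtains instead by a diagram argument. Your ``generic'' subcase of (b), where $W:=\cc\circ\ee(\image\cc^2)$ and $\image\cc^2$ span $\ker\cc$, is exactly the paper's second subcase $\cc\circ\ee(\image\cc^2)\not\subseteq\image\cc^2$, and your argument there is fine.

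The genuine gap is your degenerate subcase of (b), where $W\subseteq\image\cc^2$. Neither of your proposed tools helps: applying Proposition~\ref{nilorder p} with $\al=\ker\cc$ gives no new information, since $\image(\cc^2\circ\ee)\subseteq\image\cc^2$ already has rank~$1$, so nilpotency of $\cc^2\circ\ee|_{\ker\cc}$ is automatic; and Proposition~\ref{c^2ece} has already been fully used in producing $W$. What the paper actually uses here is the purely linear-algebraic identity (valid because $\cc^2(\ga^2)\subseteq\image\cc^2$ and $\pi_\delta\circ\cc|_{\ga^1}\neq0$)
\[
\image\bigl(\pi_{\ga^1}\circ\cc|_{\ga^2}\bigr)=\ker\bigl(\pi_\delta\circ\cc|_{\ga^1}\bigr),
\]
both sides being the same line in $\ga^1$. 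Combining this with the degeneracy and with $\ee(\image\cc^2)\subseteq\ker\cc^2$ from Proposition~\ref{nilorder p} gives
\[
\pi_\be\circ\ee(\image\cc^2)\subseteq\ker\bigl(\pi_\delta\circ\cc|_{\ga^1}\bigr)=\image\bigl(\pi_{\ga^1}\circ\cc|_{\ga^2}\bigr)=\pi_\be\circ\cc(\be),
\]
so the contribution of $\ee(\image\cc^2)$ to $\image\wt\cc_r$ is absorbed into the line $\pi_\be\circ\cc(\be)$; together with $\rk\bigl(\pi_\be\circ\ee(\delta)\bigr)\leq1$ this yields $\rk\wt\cc_r\leq2$. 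Once you insert this identity, your argument is complete.
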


\begin{proof} As usual, denote the isotropy order of $\v$ by $r$.  Consider the nested sequence of holomorphic subbundles of $\v$:
 \begin{equation*} 
 \v\supset \ker \cc^2\supset \ker \cc\supset \image \cc^2\supset 0.
 \end{equation*}
 Set
$\ga^2=\v \ominus \ker \cc^2, \, \ga^1=\ker \cc^2\ominus \ker \cc,\,\gamma^0=\ker \cc, \,  \delta=\ker \cc\ominus \alpha,$
with $\alpha=\image \cc^2$.  As in the proof of Lemma \ref{reducenilorder}, we define
inductively subbundles $\alpha_j$, $\delta_j$ and $\ga^i_{j}$ of $G^{(j)}(\v)$, for $0\leq i\leq 2$ and $j=0,1,\ldots,r$, such that
$\al_0 = \al$, $\delta_0 = \delta$, $\ga_0^i=\ga^i$,
$\ga^0_j=\alpha_j\oplus \delta_j$ and
$G^{(j)}(\v)=\ga^0_{j}\oplus \ga^1_{j}\oplus \ga^2_{j}.$
 By Lemma \ref{ker c^i}(iii), the ranks of $\ga^0,\ga^1,\ga^2$ must be (a) $3,1,1$, giving $\cc$ of rank $2$ or (b) $2,2,1$, giving $\cc$ of rank $3$. From Lemma \ref{ker c^i}(v), some $\alpha_j$ and $\delta_j$ may be zero; however, for $i=1,2$, the $\gamma^i_j$ are all non-zero; in fact the compositions $\ga_0^i \to \cdots \to \ga^i_k$, with $0<k\leq r$ are isomorphisms (away from isolated points).

 We have the following
 refinement of diagram \eqref{diag:second return}:

\vspace{0ex}
\begin{equation}
\begin{gathered}\label{diag:second return3}
\xymatrixrowsep{0.6pc}
\xymatrixcolsep{1.7pc}
\xymatrix{\v & G^{(1)}(\v) &\ldots & G^{(r)}(\v) & R
	 &\v \\
\ga^2  \ar[r] & \ar[r] & \ar[r] & \ga_r^2  \ar[r]\ar[llld]\ar[llldd]\ar[lllddd] & \ar[r]
	 &\ga^2 &
\\
\ga^1  \ar[r]\ar@{-}[u] &  \ar[r]\ar@{-}[u] &  \ar[r]\ar@{-}[u] & \ga_r^1  \ar[r]\ar@{-}[u]\ar[llldd]\ar@{-->}[llld] & \ar[r]\ar@{-}[u]\ar[ru]
	 &\ga^1\ar@{-}[u] &
\\
\delta  \ar[r]\ar@{-}[u] &  \ar[r]\ar@{-}[u] & \ar[r]\ar@{-}[u] & \delta_r  \ar[r]\ar@{-}[u] & \ar[r]\ar@{-}[u]\ar[ruu]\ar[ru]
	 &\delta\ar@{-}[u]&
\\
\alpha \ar[r]\ar@{-}[u] &  \alpha_1\ar[r]\ar@{-}[u] & \ar[r] \ar@{-}[u]& \alpha_r  \ar[r]\ar@{-}[u] &\alpha_{r+1}  \ar[r]\ar@{-}[u]\ar[ruu]\ar[ru]
	 &\alpha\ar@{-}[u]&
}
\vspace{2.5ex}
\end{gathered} \end{equation}
where, as in diagram \eqref{diag:second returnp}, we repeat the first column and set $\alpha_{r+1}=\image A'_{G^{(r)}(\v)}|_{\alpha_{r}}$. Again, not all arrows are shown. No vertical downward arrows exist, which reflects the fact that, for each $i$  and $j$, the
subbundles $\alpha_j$ and $\bigoplus_{k\leq i}\ga^k_{j}$ are holomorphic in $G^{(j)}(\v)$. Moreover:
\begin{enumerate}
  \item From Lemma \ref{ker c^i},
$\alpha$ and $\ga^2$ have rank 1 and $\cc^2:\ga^2\to\alpha$ is an isomorphism (away from isolated points). By Proposition \ref{nilorder p}, $\cc^2\circ \ee|_{\alpha}$ is nilpotent, so zero.  Hence, $\al_{r+1} \to \ga^2$ is zero and no further arrows of the form $\nearrow$ exist besides those shown.
  \item The arrow $\delta_r\rightarrow \alpha$ is zero because $\delta \subset \ker \cc$; no further arrows of the form $\swarrow$ exist  besides those shown.
  \item  None of the arrows $\searrow$ are shown, as they do not affect the following arguments.
 \item Since $\cc^2$ is non-zero, the arrows  $\ga^2_r\rightarrow \ga^1$, $\ga^1_r\rightarrow \alpha$  and the composition $\ga^2 \to \cdots \to \ga^2_r \to \ga^1 \to \cdots \to \ga^1_r \to \alpha$ are all non-zero.
\end{enumerate}

 As above, we have two possible cases: 

\medskip

(a) \textbf{$\image \cc$ has rank $2$}. In this case, $\v \ominus \ker \cc =\ga^1\oplus \ga^2$ also has rank $2$. By Lemma  \ref{ker c^i},  $\delta$ has rank $2$ and  $\ga^1$ has rank $1$. Moreover, the arrow $\ga^1_r\rightarrow \delta$ (shown dashed in diagram \eqref{diag:second return3}) is zero;
 otherwise its image would give a component of $\image \cc^2$ in $\delta$.
Forward replace $\alpha$ to obtain the harmonic map $\wt{\v}$ given by $\wt \v=\alpha_1\oplus\delta\oplus\ga^1\oplus\ga^2$.  {Define $\wt \cc_r$ as in Lemma \ref{reducenilorder} (with  $\beta=\delta\oplus \gamma^1\oplus\gamma^2$).  Since $\wt \cc_r(\ga^2)$ has a non-zero component in $\ga^1$, the
 new harmonic map $\wt \v$ has isotropy order exactly $r$ and, by  Lemma \ref{reducenilorder},  $\wt \cc_r$ is the first return map of $\wt \v$. Moreover,
 since, as we observed above, $\alpha_{r+1}\rightarrow \ga^2$ is zero and, in this case, the arrow $\ga^1_r\rightarrow \delta$ is also zero, we have $(\cc\circ \ee)(\alpha)\subset \alpha$.
 This implies, by  Lemma \ref{reducenilorder}, that $\wt \cc_r^2=0$, that is,  $\wt \cc_r$ has nilorder $2$.}

\medskip

(b) \textbf{$\image \cc$ has rank $3$}.   In this case,  $\ga^1$ has rank $2$ and $\delta $ has rank $1$.  By  Lemma \ref{ker c^i}(iii), the arrow $\ga^1_r\rightarrow \delta$ (the dashed arrow) is non-zero and, since $\cc^2(\ga^2) \subseteq \alpha$, we have
\begin{equation}\label{eq4:nil3}
\image \pi_{\ga^1}\circ \cc|_{\ga^2}=\ker \pi_{\delta}\circ \cc|_{\ga^1}.
\end{equation}
On the other hand, by Proposition \ref{c^2ece}, the cycle
$c^2 \circ e \circ c \circ e$ is zero on $\al$.  This implies that
\begin{equation}\label{eq1:nil3}
\image (\cc\circ \ee|_{\alpha})
=\ker (\cc^2\circ  \ee)\cap \image (\cc\circ \ee|_{\alpha})\subseteq \ker (\cc^2\circ \ee)\cap \{\alpha\oplus\delta\}.
\end{equation}

 Forward replace $\ker \cc=  \alpha\oplus \delta$ to obtain the harmonic map $\wt{\v}:M\to G_5(\C^n)$ given by $\wt \v=\alpha_1\oplus\delta_1\oplus\ga^1\oplus\ga^2$. Define $\wt \cc_r$ as in Lemma \ref{reducenilorder} (with $\beta=\ga^1\oplus\ga^2$).
 Since $0\neq \wt \cc_r(\ga^2) \subseteq  \pi_{\be} \circ \cc(\ga^2) \subseteq \ga^1$, $\wt{\v}$ has isotropy order exactly $r$, with $\wt \cc_r$ as first return map. Observe that
\begin{equation}\label{eq3:nil3}
\image \wt \cc_r\subseteq \image \pi_{\ga^1}\circ \cc|_{\ga^2}+\image \pi_{\ga^1+\ga^2}\circ \ee|_{\alpha+\delta}.
\end{equation}
We claim that $\wt \v$ satisfies $\rk \wt \cc_r < \rk \cc$.

First, suppose that $\cc\circ \ee(\alpha)\subseteq \alpha$. Together with \eqref{eq4:nil3}, this implies that
\begin{equation}\label{eq2:nil3}
\ee(\alpha)\subseteq \alpha\oplus\delta\oplus\ker \pi_{\delta}\circ \cc|_{\ga^1}=\alpha\oplus\delta\oplus \image \pi_{\ga^1}\circ \cc|_{\ga^2}.
\end{equation}
 Then, in view of \eqref{eq3:nil3} and \eqref{eq2:nil3},
 \begin{align*}
 \image \wt \cc_r\subseteq \image \pi_{\ga^1}\circ \cc|_{\ga^2}+\image \pi_{\ga^1+\ga^2}\circ \ee|_{\delta}.
 \end{align*}
 Since $\delta$ and $\ga^2$ have rank $1$, the subbundle on the right has rank at most 2, so that $\image \wt \cc_r$ also has rank at most $2$.

Second, suppose that $\cc\circ \ee(\alpha)\nsubseteq \alpha$.
 By \eqref{eq1:nil3}, this implies that $\cc^2\circ \ee(\delta)= 0$,
that is, $\ee(\delta) \subseteq \v \ominus \ga^2$.
Then, in view of \eqref{eq3:nil3},
 $\image \wt \cc_r\subset
 \ga^1$. Since the subbundle  $\ga^1$ has rank 2,
  $\image \wt \cc_r$ again has rank at most $2$.

Thus, in both cases, forward replacing $\ker \cc$ decreases the rank of the first return map, and  thus establishes our claim.
\end{proof}

Putting all the above work together we obtain the following generalization of \cite[Theorem 4.2]{burstall-wood} to arbitrary Riemann surfaces; we note that, for $k=2$, Theorem \ref{BuWo 3.3} gives a more  precise result.

\begin{theorem} \label{BuWo 4.2}
Let $\v:M \to G_k(\C^n)$ be a harmonic map of finite
uniton number from a Riemann surface, $k=2,3,4,5$.
Then there is a sequence of harmonic maps $\v_0,\ldots,\v_N:M \to G_k(\C^n)$ such that
\begin{enumerate}
\item[{\rm (i)}] $\v_0$ is either holomorphic or there is a harmonic map $\psi:M \to G_t(\C^n)$, $1 \leq t < k$ and a holomorphic subbundle $\alpha$ of $\bigl(\psi \oplus G''(\psi)\bigr)^{\perp}$ such that $\v_0 = \psi \oplus \alpha$$;$
\item[{\rm (ii)}] $\v_N = \v;$
\item[{\rm (iii)}] For each $i$, $0 \leq i < N$, there is a holomorphic subbundle $\ga_i$ of $\v_i$ such that $\v_{i+1}$ is obtained from $\v_i$ by forward replacement.
\end{enumerate}
\end{theorem}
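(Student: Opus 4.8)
The plan is to feed $\v$ through an algorithm that applies the simplification results of this section --- Propositions \ref{G2}, \ref{Gk nil 2 rk 1}, \ref{G4}, \ref{G5}, \ref{Gk nil k,k-1} and \ref{G5 nil 3} --- one step at a time, each step a replacement, until one reaches a harmonic map of the form appearing in (i); the existence of that normal form is then supplied by the Reduction Theorem \cite[Theorem 4.1]{burstall-wood}. The case $k=2$ is Theorem \ref{BuWo 3.3}: holomorphic maps, Frenet pairs and mixed pairs all have the form in (i) with $t=1$ (for a mixed pair $g\oplus h$ one checks $h\perp G''(g)$ directly; for a Frenet pair one uses that the Gauss bundles of a holomorphic curve are mutually orthogonal), and the backward replacements there can be traded for forward ones via Remark \ref{rem:irr}(i).

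So take $k\in\{3,4,5\}$. If $\v$ is $\pa'$-reducible --- or $\pa''$-reducible, or strongly isotropic, in which latter case nilconformality forces reducibility --- we take $\v_0=\v$, $N=0$ and quote the Reduction Theorem, which (after reversing the orientation of $M$ in the $\pa''$-reducible case, see \S\ref{subsec:Grass}) exhibits $\v$ as $\psi\oplus\al$ with $\psi:M\to G_t(\C^n)$ harmonic, $1\le t<k$, and $\al$ holomorphic in $\bigl(\psi\oplus G''(\psi)\bigr)^{\perp}$. Otherwise $\v$ is $\pa'$-irreducible, hence has finite isotropy order $r$, nonzero first return map $\cc$, and finite nilorder $p$ with $2\le p\le k$; since $\v$ is $\pa'$-irreducible every replacement we perform preserves $\rk\v=k$, so all intermediate maps stay in $G_k(\C^n)$. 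We then choose the replacement by the pair $(p,\rk\cc)$, the inclusion $\image\cc^{p-1}\subseteq\ker\cc$ restricting the possible $\rk\cc$: if $p>2$ and $p\in\{k,k-1\}$ use Proposition \ref{Gk nil k,k-1} to lower $p$; if $k=5$, $p=3$ use Proposition \ref{G5 nil 3} to lower $p$, or to lower $\rk\cc$ with $p$ and $r$ unchanged; if $p=2$, $\rk\cc=2$ (possible only for $k=4,5$) use Proposition \ref{G4} (for $k=4$) or \ref{G5} (for $k=5$) to lower $\rk\cc$ with $p,r$ unchanged, or to raise $r$; and if $p=2$, $\rk\cc=1$ use Proposition \ref{Gk nil 2 rk 1} to raise $r$. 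Every $(k,p,\rk\cc)$ that can occur is covered.

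For termination, note that $r$ is bounded above by a constant $B=B(n)$, because $G^{(0)}(\v),\ldots,G^{(r)}(\v)$ are mutually orthogonal and nonzero; hence at most $B$ of the replacements raise $r$, and between consecutive ones (and before the first and after the last) there can be only boundedly many of the remaining kind, since those strictly decrease $(p,\rk\cc)$ in lexicographic order while $r$ stays fixed. So the algorithm halts --- necessarily at a map that is not $\pa'$-irreducible (every $\pa'$-irreducible finite-isotropy map with $k\le5$ admits one of the above steps), i.e.\ at a $\pa'$-reducible (or strongly isotropic, or holomorphic) map, which has the form in (i). Reading the resulting chain $\v=\v_N\rightsquigarrow\cdots\rightsquigarrow\v_0$ in reverse and invoking Remark \ref{rem:irr}(i) turns each step into the inverse of a rank-preserving replacement and --- as in the case $k=2$, after an orientation adjustment of $M$ where needed --- into a forward replacement of a holomorphic subbundle, yielding $\v_0\to\v_1\to\cdots\to\v_N=\v$ as required. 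The substantive input has already been done: the nilpotency on an arbitrary Riemann surface of the cycles $\cc\circ\ee$, $\cc\circ\ee^s$, $\cc^{p-1}\circ\ee$, $\cc^2\circ\ee\circ\cc\circ\ee^2$ and $\cc^2\circ\ee\circ\cc\circ\ee$ (Propositions \ref{ce}, \ref{ce^s}, \ref{nilorder p}, \ref{cece^2}, \ref{c^2ece}); the point needing the most care is this last bookkeeping --- keeping the chain inside $G_k(\C^n)$ and correctly oriented while passing to the reversed sequence of forward replacements --- together with checking that each terminal reducible map, $\pa'$-reducible, $\pa''$-reducible, or strongly isotropic, genuinely falls under (i).
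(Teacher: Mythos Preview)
Your overall strategy --- iterate the simplification Propositions to reach a $\pa'$-reducible map, then reverse orientation and invert to obtain a chain of forward replacements ending at $\v$ --- matches the paper's proof exactly, and your case split on $(p,\rk\cc)$ is the same as the paper's. Your explicit termination argument via the lexicographic order on $(p,\rk\cc)$ together with the bound on $r$ is a welcome addition the paper leaves implicit.

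There is, however, a real gap in your handling of the strongly isotropic case. You write that ``nilconformality forces reducibility'', and then take $\v_0=\v$, $N=0$. But nilpotency of $A^{\v}_z$ only implies that $\v$ \emph{or} $\v^{\perp}$ is $\pa'$-reducible (see the end of \S\ref{diagrams}); it can happen that $\v^{\perp}$ alone is reducible while $\v$ itself is both $\pa'$- and $\pa''$-irreducible. For instance, take a strongly isotropic harmonic map whose first few Gauss bundles $G^{(-1)}(\v),\v,G^{(1)}(\v)$ all have rank $k$: then $\v$ is $\pa'$- and $\pa''$-irreducible, yet strongly isotropic. Such a $\v$ falls into neither branch of your dichotomy --- it is not reducible, so you cannot take $N=0$; and it is $\pa'$-irreducible but has \emph{infinite} isotropy order, so your ``Otherwise \ldots\ finite isotropy order $r$'' clause does not apply either. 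The paper handles this case by forward replacing all of $\v_i$ (i.e.\ $\ga_i=\v_i$, so $\v_{i+1}=G'(\v_i)$) until, by dimension, some Gauss bundle becomes $\pa'$-reducible; then the usual orientation-reversal/inversion finishes.

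A smaller point: your reduction of $k=2$ to Theorem~\ref{BuWo 3.3} is not quite right as stated. That theorem produces a chain with \emph{both} forward and backward replacements, and Remark~\ref{rem:irr}(i) does not let you ``trade'' a backward step for a forward one in the same direction --- it only says that inverting a forward replacement gives a backward one. The clean way (and what the paper does implicitly) is simply to run the same $k\ge3$ argument for $k=2$: forward replace using Proposition~\ref{G2} until $\pa'$-reducible, then reverse orientation and invert.
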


\begin{proof}
Using one of the propositions above, if $\v$ is of finite uniton number and 
$\pa'$-irreducible, we can forward replace a suitable subbundle of it to give a map which reduces the rank of $\cc$, decreases the nilorder of $\cc$, or increases the isotropy order of $\v$.
We continue this procedure until we reach a $\pa'$-reducible harmonic map.  

Reversing the orientation on the domain, this becomes $\pa''$-reducible and the replacements become backward replacements.  Inverting these gives forward replacements leading to the above result.   The $\pa''$-reducible map is described by \cite[Theorem 4.1]{burstall-wood}.

More specifically, for $\v$ of finite uniton number,
at each stage:

(i) If $\cc$ has nilorder $2$, for $k=2$ we apply Proposition \ref{G2}.
For arbitrary $k$, (a) if $\rk c = 1$, we apply Proposition \ref{Gk nil 2 rk 1}; (b)  if $\rk c=2$, then, by Lemma \ref{ker c^i}(iv), $k \geq 4$, and we apply Proposition \ref{G4} for $k=4$ or Proposition \ref{G5} for $k=5$.

(ii) If $\cc$ has nilorder $p > 2$, then Proposition \ref{Gk nil k,k-1} covers $p=k$ or $k-1$. The only remaining case is $k=5$ and $\cc$ of nilorder $3$, which is covered by Proposition \ref{G5 nil 3}.

Lastly, note that, if $\v$ is strongly isotropic, i.e., of infinite isotropy order,
by dimension considerations, some $\pa'$-Gauss bundle must be $\pa'$-reducible and the same result holds with each $\ga_i = \v_i$.
\end{proof}

\section{Applications}
\subsection{Constant curvature minimal surfaces} \label{subsec:const-curv}

We give an application to the important question of finding harmonic maps, or minimal immersions, of constant curvature in various symmetric spaces, here the complex quadric $Q_{n-2}$ or the Grassmannian, $G_2(\R^n)$, of $2$-dimensional subspaces.   Recall that
$Q_{n-2} = \bigl\{[Z] = [Z_1,\ldots,Z_n] \in \CP^{n-1}: Z_1^{\,2} + \cdots + Z_n^{\,2} = 0 \bigr\}$; it can be identified with the real Grassmannian of {\emph{oriented}} $2$-dimensional subspaces of $\R^n$, and so it double covers
$G_2(\R^n)$.  The double covering $Q_{n-2} \to G_2(\R^n)$ is given by $[Z]=[X+\ii Y] \mapsto \spa\{X,Y\}$; when $G_2(\R^n)$ is considered as a (totally geodesic) subspace of $G_2(\C^n)$, this span equals $\spa\{Z, \ov{Z}\}$.

Harmonic maps from $S^2$ to $G_2(\R^n)$ were studied in \cite{bahy-wood-G2}.  Sections 1--4 of that paper depend on nilpotency of the first return map so immediately generalize to harmonic maps of finite uniton number from any Riemann surface $M$.
In particular,
recall from \S \ref{subsec:nilorder 2} that a `real mixed pair' is a map $\v:M \to G_2(\R^n)$ of the form $h \oplus \ov{h}$ where $h:M \to Q_{n-2}$ is holomorphic; thus $\v$ is the projection of that holomorphic map under the above double covering, which confirms that any real mixed pair is harmonic.
For brevity, write $h_i = G^{(i)}(h)$ for $i=0,1,\ldots$.
{}From \cite[Lemma 2.14]{bahy-wood-G2}, $\v$ has finite isotropy order $r$ (necessarily odd \cite[Proposition 2.8]{bahy-wood-G2}) if and only if $h$ satisfies the condition
\begin{equation} \label{real-isotropy-order}
h_i \perp \ov{h} \ (0 \leq i \leq r)
\text{ and } h_{r+1} \not\perp \ov{h};
\end{equation}
the integer $r$ is sometimes called the \emph{(real) isotropy order of $h$} \cite[\S 4.2]{FSW-On}; if there is no such $r$, $\v$ has infinite isotropy order;  this can only happen if $h$ is \emph{not (linearly) full}, i.e.\  its image lies in a proper projective
subspace of $\CP^{n-1}$, cf.\ Proposition \ref{reducible-G2n}(b) below.

For $M = S^2$, an algorithm involving integration for determining such holomorphic maps $h$, and so \emph{all real mixed pairs of finite isotropy order}, is given in \cite[\S 5]{bahy-wood-G2}.

For a \emph{full} holomorphic map $h:M \to \CP^{2p}$ the maximum value of $r$ in \eqref{real-isotropy-order} is equal to $2p-1$ and then $h$ is
called \emph{totally isotropic} \cite[Definition 3.13]{eells-wood}.  In this case $h_{2p-i} = \ov{h_i}$ \ $(i=0, \ldots, 2p)$; $h_p = \ov{h_p}$ has values in $\RP^n$ and, by being careful with orientations (cf. \cite[\S 3C]{eells-wood}), lifts to a full (strongly) isotropic harmonic map $\psi$ into $S^n$.  The assignment $h \mapsto \psi$ is the $2:1$-correspondence of E.~Calabi \cite{calabi-spheres, calabi-quelques};
see \cite[\S 4.2]{FSW-On} for connections with harmonic maps with $S^1$-invariant (see \S \ref{sec:finite type}) extended solutions into the orthogonal group, leading to explicit constructions of totally isotropic holomorphic maps from Riemann surfaces which do not use integration.

For a harmonic map $\v:M \to G_k(\R^n)$, we have $G''(\v) = \ov{G'(\v)}$; hence $\v$ is $\pa'$-reducible if and only if it is $\pa''$-reducible, we shall simply say that $\v$ is \emph{reducible}.
In \cite[Proposition 2.12]{bahy-wood-G2}, it is shown that {\emph{a harmonic map of finite isotropy order from $S^2 \to G_2(\R^n)$ is reducible if and only if it is a real mixed pair}}; as above, this generalizes to harmonic maps of finite uniton number from any Riemann surface.

In \cite{JiaoLi Qn}, X.~Jiao and M.~Li study harmonic maps from the $2$-sphere to $G_2(\R^n)$ of constant curvature; in their  Proposition 3.1, they find all such strongly isotropic reducible maps. We generalize this to harmonic maps of finite uniton number from a Riemann surface; {for our first result, Proposition \ref{reducible-G2n}} below, we do not require constant curvature.  For a complex vector space $W$, let $P(W)$ denote the corresponding projective space; if $W$ has dimension $m$ then $P(W)$ may be identified with $\CP^{m-1}$.
Also, for $v=(v_1,\ldots,v_n),
w = (w_1, \ldots, w_n) \in \C^n$ we let $\langle v, w \rangle$ denote the standard Hermitian inner product
$v_1 \ov{w_1} + \cdots + v_n \ov{w_n}$.

\begin{proposition} \label{reducible-G2n}
Let $\v: M \to G_2(\R^n)$ be a non-constant
reducible harmonic map which is strongly isotropic, i.e., of infinite isotropy order.
Then, one of the following holds:

{\rm (a)} $\v = f_p \oplus \underline{\kappa}$ for some
$p \in \{1,2,\ldots\}$ where
$f:M \to P(W) \subset \CP^n$ is a full totally isotropic holomorphic map with $W$ a constant real ($W=\overline{W}$)  subspace of\/ $\C^n$ of  dimension $2p+1$, and $\underline{\kappa}$ is a constant real $1$-dimensional subspace of\/ $\C^n$ orthogonal to $W$.

{\rm (b)} $\v$ is a real mixed pair:  $\v= h \oplus \ov{h}$ where $h: M \to P(W) \subset \CP^{n-1}$ is a full holomorphic map with $W$ an isotropic subspace of\/ $\C^n$ (here `isotropic' means that $W \perp \ov{W}$).

Conversely, any map  $\v:M \to G_2(\R^n)$ of type (a) or (b) is a strongly isotropic reducible harmonic map.
\end{proposition}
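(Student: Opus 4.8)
\smallskip
\noindent\textit{Proof proposal.}

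\noindent\textbf{The converse (routine, to be disposed of first).}
For $\v=h\oplus\ov h$ of type (b) one computes $G^{(i)}(\v)=G^{(i)}(h)$ and $G^{(-i)}(\v)=\ov{G^{(i)}(h)}$ for $i\geq1$; these are mutually orthogonal because the Gauss bundles of the holomorphic curve $h$ are mutually orthogonal and lie in the isotropic subspace $W$, so $\v$ is strongly isotropic, while $\rk G'(\v)=\rk G^{(1)}(h)=1<2$ makes $\v$ reducible (essentially \cite[Proposition 2.12]{bahy-wood-G2}). For $\v=f_p\oplus\underline\kappa$ of type (a) the harmonic sequence of $\v$ is $0,\,f,\,G'(f),\,\ldots,\,G^{(2p)}(f)=\ov f,\,0$ with the constant line $\underline\kappa$ split off $G^{(0)}(\v)=\v$: total isotropy of $f$ gives mutual orthogonality of the $G^{(i)}(f)$, $\underline\kappa\perp W$ gives orthogonality to $\underline\kappa$, so $\v$ is strongly isotropic; $\rk G'(\v)=\rk G^{(p+1)}(f)=1$ gives reducibility, and harmonicity is immediate since $\underline\kappa$ is constant and orthogonal to $W$.

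\noindent\textbf{The direct assertion: setting up.}
Let $\v:M\to G_2(\R^n)$ be non-constant, reducible and strongly isotropic. Since $\rk\v=2$ and $\v$ is reducible, $\rk G'(\v)\leq1$; and $\rk G'(\v)=0$ forces $A'_\v=0$, hence $A''_\v=0$ because $G''(\v)=\ov{G'(\v)}$, so $\v$ is constant --- excluded. Thus $\rk G'(\v)=\rk G''(\v)=1$. Being strongly isotropic, $\v$ has finite uniton number, so its harmonic sequence $\bigl(G^{(i)}(\v)\bigr)_{i\in\Z}$ is finite with mutually orthogonal members; since $G^{(-i)}(\v)=\ov{G^{(i)}(\v)}$ it is symmetric about $0$, terminating at some $G^{(\pm a)}(\v)\neq0=G^{(\pm(a+1))}(\v)$ with $a\geq1$ (again $a=0$ would make $\v$ constant). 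Hence $g:=G^{(-a)}(\v)$ is a holomorphic line subbundle of $\CC^n$, full into some $P(W_g)$ with Gauss bundles $g_i:=G^{(i)}(g)$; $G^{(a)}(\v)=\ov g$ is antiholomorphic; and every $G^{(i)}(\v)$ with $i\neq0$ has rank $1$, while $G^{(0)}(\v)=\v$ has rank $2$. Put $\al:=\ker A'_\v$, $\be:=\v\ominus\al$, so $A''_\v(\v\ominus\ker A'_\v)=A''_\v(\be)$ in the notation of \eqref{Frenet-mixed-condn}, and $\ker A''_\v=\ov\al$ by conjugating.

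\noindent\textbf{The dichotomy.}
If $A''_\v(\be)=0$, then Proposition \ref{Frenet-mixed} shows $\v$ is a Frenet pair or a mixed pair; since $\v=\ov\v$ and there are no real Frenet pairs \cite[Proposition 5.10]{bahy-wood-G2}, $\v$ is a real mixed pair, and writing $\v=h\oplus\ov h$ with $W=\spa\{G^{(i)}(h):i\geq0\}$ (isotropic because $\v$ is strongly isotropic) gives case (b). If $A''_\v(\be)\neq0$, I claim $\v$ is of type (a). From the set-up $g_i=G^{(-a+i)}(\v)$ for $0\leq i\leq a-1$, so $g_{a-1}=G^{(-1)}(\v)$ and $g_a=G'\bigl(G^{(-1)}(\v)\bigr)=G'\bigl(G''(\v)\bigr)=\ov\be$ lies in $\v$, while dually $\ov{g_a}=G''\bigl(G^{(1)}(\v)\bigr)=\be\subseteq\v$. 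One first excludes $\be\neq\ov\be$: if $\be\perp\ov\be$ then $\al=\ov\be$, so $A'_\v(\ov\be)=0$ and hence $A''_\v(\be)=\ov{A'_\v(\ov\be)}=0$ (conjugating, using $\v=\ov\v$), contradicting the case hypothesis; the remaining ``unbalanced'' possibility ($\be\neq\ov\be$, $\be\not\perp\ov\be$) is ruled out by the conjugation symmetry of the canonical decomposition of $\v$ furnished by the Reduction Theorem (see below). Hence $\be=\ov\be$, so $\al=\ov\al$ too, the splitting $\v=\al\oplus\be$ is real, and $\be=g_a$. A bootstrap along the finite symmetric harmonic sequence, matching the Gauss sequences of $g$ and $\ov g$ against that of $\v$ by $G^{(-i)}(\v)=\ov{G^{(i)}(\v)}$, shows $f:=g$ is full and totally isotropic into $P(W_g)$ with $W_g=\ov{W_g}$ of dimension $2p+1$, $p:=a$, so $\be=f_p$. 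Finally $\al$ is orthogonal to every $G^{(i)}(\v)$, $i\neq0$, and to $f_p=\be$ inside $\v$, hence to $W_g$; together with $A'_\v|_\al=0$, $A''_\v|_\al=0$, and $A'_{\al,\be}=-(A''_{\be,\al})^*=0$ (since $\pa_{\zbar}\Gamma(\be)\subseteq\Gamma(f_{p-1}\oplus f_p)\perp\al$), this forces $\pa_z\Gamma(\al)\subseteq\Gamma(\al)$ and $\pa_{\zbar}\Gamma(\al)\subseteq\Gamma(\al)$, i.e.\ $\al=\underline\kappa$ is a constant real line orthogonal to $W_g$: case (a).

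\noindent\textbf{Main obstacle, and the way I would actually organise it.}
The substantive work is entirely in the case $A''_\v(\be)\neq0$: excluding the ``unbalanced'' configuration in which $\ker A'_\v$ is neither real nor bilinear-isotropic in the rank-$2$ real bundle $\v$, and then extracting the total isotropy of $f$ and the exact dimension $2p+1$ of $W_g$ by propagating the conjugation symmetry $G^{(-i)}(\v)=\ov{G^{(i)}(\v)}$ along the whole finite harmonic sequence. The cleanest way to handle both points is to invoke the Reduction Theorem \cite[Theorem 4.1]{burstall-wood} (packaged in Theorem \ref{BuWo 4.2}(i), and for strongly isotropic maps in \cite[Theorem 3.9]{burstall-wood}): it presents $\v$ \emph{canonically} as a holomorphic line bundle $\sigma$ added to a $\pa'$-irreducible harmonic line bundle $\psi$. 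Since $\v=\ov\v$ and this decomposition is uniquely determined by $\v$, necessarily $\{\ov\psi,\ov\sigma\}=\{\psi,\sigma\}$, so \emph{either} $\ov\psi=\psi$ and $\ov\sigma=\sigma$ (both real) --- which forces $\psi$ to be the middle Gauss bundle $f_p$ of a full totally isotropic holomorphic $f$ and $\sigma$ a constant real line orthogonal to the span of the $G^{(i)}(f)$, i.e.\ case (a) --- \emph{or} $\ov\psi=\sigma$, so $\psi=\ov\sigma$ is antiholomorphic and $\v=\psi\oplus\sigma$ is a real mixed pair, i.e.\ case (b); there is no third possibility.
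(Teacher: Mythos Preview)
Your set-up and the converse are fine, and your dichotomy based on $A''_\v(\be)$ is equivalent to the paper's dichotomy $f_{p+1}=0$ versus $f_{p+1}\neq0$ (in your notation, $g_{a+1}$): indeed $A''_\v(\be)=0$ iff $\be\subseteq\ker A''_\v=\ov\al$ iff $\al=\ov\be=g_a$ iff $g_a\perp\ov{g_a}$, and this in turn forces $g_{a+1}=0$. Your use of Proposition~\ref{Frenet-mixed} for the first branch is a nice alternative to the paper's direct differentiation argument.

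The genuine gap is your exclusion of the ``unbalanced'' configuration $\be\neq\ov\be$, $\be\not\perp\ov\be$. Your Reduction-Theorem argument does not close it: the decomposition $\v=\psi\oplus\sigma$ from \cite[Theorem~4.1]{burstall-wood} has $\psi$ canonical (it is $G'(G''(\v))=g_a$), but conjugation sends a $\pa''$-irreducible $\psi$ to the $\pa'$-irreducible $\ov\psi=G''(G'(\v))=\ov{g_a}$ and a holomorphic $\sigma$ to an antiholomorphic $\ov\sigma$. Thus $\ov\v=\ov\psi\oplus\ov\sigma$ is the \emph{dual} reduction of $\v$, not another instance of the same one, and uniqueness of the $\pa''$-reduction does \emph{not} force $\{\ov\psi,\ov\sigma\}=\{\psi,\sigma\}$. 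Concretely, in the unbalanced case one has $\psi=g_a$, $\sigma=\v\ominus g_a$, $\ov\psi=\ov{g_a}$, $\ov\sigma=\v\ominus\ov{g_a}$, four pairwise distinct lines in $\v$.

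The paper avoids this entirely by proving the single containment $g_{a+1}\subset G'(\v)=\ov{g_{a-1}}$: since $G'(\v)$ has rank $1$, either $g_{a+1}=\ov{g_{a-1}}$ (whence $g_a=G''(g_{a+1})=\ov{g_a}$, i.e.\ your $\be=\ov\be$, giving case~(a)) or $g_{a+1}=0$ (whence, by differentiating $\langle F_{a-1},\ov{F_a}\rangle=0$, one gets $\langle F_a,\ov{F_a}\rangle=0$, i.e.\ $g_a\perp\ov{g_a}$, giving case~(b)). The unbalanced case is thereby automatically impossible; you never need a symmetry argument for the reduction. This containment $g_{a+1}\subset G'(\v)$ is the real key step, and it is what your argument is missing.
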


\begin{proof}
As in the proof of \cite[Proposition 3.1]{JiaoLi Qn}, for some
$p \in \{1,2,\ldots\}$,

\begin{equation} \label{fp}
\text{\emph{there is a holomorphic map }} f:M \to \CP^{n-1}
\text{\emph{ such that $f_p$ lies in $\v$.}}
\end{equation}

 Indeed, by reducibility of $\v$, $G''(\v)$ has rank 1, so is a harmonic map $M \to \CP^{n-1}$ of finite uniton number.  By \cite[Theorem 4.12]{APW1} or Theorem \ref{th:wolfson} below, this is $f_{p-1}$ for some holomorphic map $f:M \to \CP^{n-1}$ and integer $p \geq 1$.  Since  $A''_{\v}:\v \to f_{p-1}$ is surjective, its adjoint (up to sign) $A'_{f_{p-1}}$ is injective with non-zero image $f_p \subset \v$. By reality of $\v$, $\ov{f_p}$ is also non-zero and lies in $\v$.  It follows that $G^{(-i)}(\v) = f_{p-i}$ and $G^{(i)}(\v) = \ov{f_{p-i}}$ \ $(i=1,\ldots, p)$   and these are all of rank $1$.

Now $f_{p+1} \subset G'(\v)  =  \ov{f_{p-1}}$, so either
(a) $f_{p+1} = \ov{f_{p-1}}$ or (b) $f_{p+1} = 0$.

In case (a), $f_p = G'(f_{p-1}) = \ov{G''(f_{p+1})} = \ov{f_p}$ which implies that
$f_i = \ov{f_{2p-i}}$ \ $(i=0,1,\ldots, 2p)$. {It follows that} the $f_i$ \ $(i=0,1,\ldots, 2p)$ span a constant real
 subspace $W$ of  dimension $2p+1$ and $f:M \to P(W)$ is a full totally isotropic map.  Write $\v = f_p \oplus \underline{\kappa}$ (orthogonal direct sum)
  where $\underline{\kappa}$ is a real {rank $1$} subbundle of $\CC^n$.  Then by strong isotropy of $\v$, $\underline{\kappa}$ is orthogonal to $f_i$ for all $i$ and so is in the orthogonal complement of $W$.  It must be constant, otherwise it would contribute an additional subspace to $G'(\v)$.

In case (b), set $h = \ov{f_p}$, then $h$ is holomorphic.
Further, choose a holomorphic section  $F_0$ of $f$ and set
$F_i = A'_{f_{i-1}}(F_{i-1})$ for $i=1,2,\ldots$; then,  {since $F_{p-1}$ lies in $G''(\v)$ and $\ov{F_p}$ in $\v$,} their Hermitian inner product $\langle F_{p-1},\ov{F_p} \rangle$ is zero.  Differentiating this with respect to $z$ gives
$\langle F_p,\ov{F_p} \rangle + \langle F_{p-1},\ov{F_{p+1}} \rangle = 0$. Since $f_{p+1} = 0$,  $A'_{f_p}$ and so $F_{p+1}$ are zero, so that the second inner product is zero;
hence the first inner product is also zero,
showing $h$ is orthogonal to $\ov{h}$
so that $\v = h \oplus \ov{h}$ is a real mixed pair.
Set $W = \spa\{h_i:i=0,1,\ldots,p\}
= \ov{\spa\{f_j:j=0,1,\ldots,p\}}$.  Since $W$ is closed under differentiation with respect to $z$ and $\zbar$, it is constant; by strongly isotropy of $\v$ and orthogonality of $h$ and $\ov{h}$, $W$ is isotropic.

The converse statement is clear.
\end{proof}

\begin{remark} \label{rem:reducible-G2n}
{\rm (i)} A real mixed pair satisfies \eqref{fp} for $p=0$, as well as for some $p \geq 1$, but we need $p \geq 1$ in the proof.

\smallskip

{\rm (ii)} The proof shows that, if $\v: M \to G_2(\R^n)$ is a non-constant \emph{strongly isotropic} reducible map, then $n \geq 2p+2$ with equality if $\v$ is full. In particular, $n \geq 4$; moreover \cite{erdem-wood}, \emph{for a strongly isotropic full reducible harmonic map $M \to G_2(\R^n)$, $n$ is even.} 

When $\v$ is full, we have the following$:$ 

\indent\indent  \emph{In case (a),} $\underline{\kappa}$ is the constant $1$-dimensional subspace $W^{\perp}$.
Letting $e_1,\ldots, e_n$ denote the standard orthonormal basis $(1,0, \ldots, 0),$ $\ldots,$ $(0,0,\ldots, 1)$ of $\C^n$; up to an isometry in $\U(n)$ we can take $W = \spa\{e_1,\ldots, e_{n-1}\}$ so that $W^{\perp} = \spa\{e_n\}$.

\indent\indent \emph{In case (b),} $W$ is a \emph{maximal} (so $(p+1)$-dimensional) isotropic subspace of\/ $\C^{2p+2}$. 
 Up to an isometry in $\U(n)$, we can take $W$ to be
the maximally isotropic subspace of\/ $\C^n$ given by the image of the isometric linear {injection $I:\C^{p+1} \to \C^n$},
$e_i \mapsto (1/\sqrt{2})(e_{2i-1}+\ii e_{2i})$,
from $\C^{p+1} = \spa\{e_1,\ldots,e_{p+1}\}$.

\smallskip

 {\rm (iii)} There are, however, non-constant reducible harmonic maps $M \to G_2(\R^n)$ for $n=3$; in fact, 
 \emph{all non-constant harmonic maps of finite uniton number are real mixed pairs $f \oplus \ov{f}$ where $f:M \to Q_1 \cong \CP^1$ is holomorphic;} these have isotropy order $1$,
cf. \cite[Proposition 6.4]{bahy-wood-G2}.
 \end{remark}

Now we see which of the above maps are of constant curvature.
Recall that the \emph{(round) Veronese maps} are the holomorphic maps $V_0^{(m)}:  S^2 \to \CP^m$ \ $(m=1,2,\ldots)$ given by the formula
$$
S^2 = \C\cup\infty \ni z \mapsto \Big[1, \sqrt{\binom{m}{1}} z, \ldots, \sqrt{\binom{m}{r}} z^r, \ldots, z^m \Big].
$$
For each $m$, this is an {embedding} which induces a metric of constant curvature on $S^2$, as do its Gauss bundles $V_p^{(m)} := G^{(p)}(V_0^{(m)}):S^2 \to \CP^m$
 \ $(p=0,1,\ldots, m)$ {(see \S \ref{subsec:more} for `Gauss
 bundles' and \cite{BJRW} for explicit formulae)}; these are also embeddings except when $m=2p$.

When $m=2p$, $V_p^{(m)}:S^2 \to \CP^m$ is {isometric, via an isometry of $\CP^m$, to an immersion $S^2 \to \RP^m \subset \CP^m$ called a \emph{Veronese--Bor\r{u}vka map}, see  \cite{boruvka,BJRW}. This lifts to an immersion $S^2 \to S^m$ of constant curvature given by an orthonormal basis of spherical harmonics, see \cite{wallach}}.
 As in \cite[\S 3]{JiaoLi Qn}, the isometry {on $\CP^m$} is given by any $U \in \U(2p+1)$ satisfying the condition
\begin{equation} \label{UTU}
U^T U = W_0\,, \hspace {2ex} \text{ equivalently, } \hspace {2ex} \ov{U} = U W_0	
\end{equation}
where $W_0 = \antidiag(1,-1,1,-1, \ldots,1)$;
this ensures that $U V_0^{(m)}$ is totally isotropic.
Up to left-multiplication by an orthogonal matrix, $U$ is given by a matrix $U_0$ defined in \cite[Remark 3.3]{JiaoLi Qn}; for $p=2$ this matrix is
$$
U_0 = \begin{pmatrix}  1/\sqrt{2} & 0 & 0 & 0 & 1/\sqrt{2} \\
				\ii/\sqrt{2} & 0 & 0 & 0 & -\ii\sqrt{2} \\
				0 & 1/\sqrt{2} & 0 & -1/\sqrt{2} & 0 \\
			    0 & \ii/\sqrt{2} & 0 & \ii/\sqrt{2} & 0 \\
					            0 & 0 & 1 & 0 & 0  \end{pmatrix}\,,
$$ and the totally isotropic holomorphic map $f= U_0 V_0^{(4)}:S^2 \to \CP^4$ is given by
$$
f(z) = [1+z^4, \ii(1-z^4), 2(z-z^3), 2\ii(z+z^3),2\sqrt{3}z^2].
$$
The map {$\v:= G^{(2)}(f):S^2 \to \RP^4$ lifts to an immersion $S^2 \to S^4$ which} defines a minimal surface of constant curvature, cf.\ \cite[p.~149]{chern}  where a formula in real Cartesian coordinates is given.

We now generalize \cite[Propositions 3.2 and 3.5]{JiaoLi Qn} to an arbitrary Riemann surface $M$; as above, $e_1,\ldots, e_n$ denotes the standard orthonormal basis $(1,0, \ldots, 0),$ $\ldots,$ $(0,0,\ldots, 1)$ of $\C^n$ and $V_p^{(m)}:S^2 \to \CP^m$ denotes the $p$th Gauss bundle of the Veronese map.

\begin{theorem} \label{jiao-li}
Let $\v: M \to G_2(\R^n)$ be a full reducible immersive harmonic map of finite uniton number such that the induced metric on $M$ has constant
Gauss curvature. Then, for any $q \in M$ there is {an open neighbourhood $A$ of $q$ and an isometry $\iota$ from $A$ to an open} subset of $S^2$ such that $\v|_A = \Phi \circ \iota$ where $\Phi: S^2 \to G_2(\R^n)$ is one of the following maps$:$

{\rm (i)} If\/ $\v$ has finite isotropy order $r$, then
 $\Phi = F \oplus \ov{F}$ where $F:S^2 \to \CP^{n-1}$ is a holomorphic map given by
$F= U V_0^{(n-1)}$ for some $U \in \U(n)$.  Further, if $r \geq n-4$, then $n$ is odd, $r=n-2$, and $U$ satisfies \eqref{UTU}
so that $F$ is totally isotropic.

{\rm (ii)} If\/ $\v$ has infinite isotropy order,
then $n$ is even and writing $n=2p+2$, up to an isometry of $G_2(\C^n)$ given by an element of $\U(n)$,
\emph{either}

\indent\indent {\rm (a)} $\Phi = F_p \oplus \spa\{e_n\}$, where
$W = \spa\{e_1,\ldots, e_{n-1}\}$ and
$F_p:S^2 \to \RP^{2p} \subset \CP^{2p} = P(W)$ is given by
$F_p = U V_p^{(2p)}$ for some $U \in  \U(2p+1) $ satisfying \eqref{UTU},
\emph{or}

\indent\indent {\rm (b)} $\Phi = F \oplus \ov{F}$,
where $W$ is
the maximally isotropic subspace of\/ $\C^n$ given by the image of the isometric linear injection $I:\C^{p+1} \to \C^n$,
$e_i \mapsto (1/\sqrt{2})(e_{2i-1}+\ii e_{2i})$,
 and $F: S^2 \to P(W)$ is the Veronese map
$V_0^{(p)}:S^2 \to \CP^p$ followed by the isometry
$\CP^p = P(\C^{p+1}) \to P(W)$ induced by $I$.
\end{theorem}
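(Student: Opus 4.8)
The plan is to combine the structural classification of reducible harmonic maps into $G_2(\R^n)$ obtained above with classical rigidity theorems for constant curvature holomorphic curves and minimal immersions into complex projective space. \emph{Step 1 (reduction to standard form).} If $\v$ has finite isotropy order, the finite-uniton-number generalization of \cite[Proposition 2.12]{bahy-wood-G2} discussed above shows that $\v$ is a real mixed pair $\v = h \oplus \ov h$ with $h:M \to Q_{n-2}$ holomorphic; restricting to a suitable open subset of $M$ we may assume $h$ is full in some projective subspace $P(W') \subseteq \CP^{n-1}$, and reality of $\v$ together with fullness of $\v$ then forces $W' = \C^n$. If $\v$ has infinite isotropy order, Proposition \ref{reducible-G2n} puts $\v$, after an isometry of $G_2(\C^n)$, into one of the forms $\v = f_p \oplus \underline\kappa$ (with $f:M \to P(W)$ full totally isotropic, $\dim_\C W = 2p+1$) or $\v = h \oplus \ov h$ (a real mixed pair with $h:M \to P(W)$ full holomorphic, $W$ isotropic); in both subcases fullness of $\v$ forces $n = 2p+2$.

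\emph{Step 2 (transfer of the metric).} In each case I would show that the metric induced on $M$ by $\v$ is a fixed positive multiple of the Fubini--Study pull-back metric of the underlying holomorphic map. For a real mixed pair $\v = h \oplus \ov h$ (so $h$ holomorphic, $h \perp \ov h$ and, since the isotropy order is at least $1$, $G'(h) \perp \ov h$) a direct computation from the definitions in \S\ref{subsec:Grass} gives $A'_\v|_{\ov h} = 0$ and $A'_\v|_h = A'_h$ (with image in $G'(h) \subseteq \v^{\perp}$), and similarly for $A''_\v$; hence the energy density of $\v$ equals a constant times $|A'_h|^2$, which is a constant times the energy density of $h$. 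For the case $\v = f_p \oplus \underline\kappa$ the same computation, using that $\underline\kappa$ is constant and that the harmonic sequence of the holomorphic $f$ is a Frenet frame, gives energy density of $\v$ equal to $|A'_{f_p}|^2 + |A'_{f_{p-1}}|^2$, which is exactly the energy density of $f_p$ as a map into $\CP^{2p}$. Thus the relevant underlying map --- $h$, respectively $f_p$ --- is an \emph{immersion} (because $\v$ is) whose induced metric has constant Gauss curvature.

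\emph{Step 3 (rigidity).} In the finite-isotropy case and in case (ii)(b), the underlying map is a full holomorphic curve of constant curvature into $\CP^{n-1}$, respectively $\CP^{p}$; by Calabi's rigidity theorem \cite{calabi-spheres,calabi-quelques}, after composing with an isometry $\iota$ of a neighbourhood $A$ of $q$ onto an open subset of a round $2$-sphere and a unitary change of coordinates in the target, this map is the round Veronese embedding $V_0^{(n-1)}$, respectively $V_0^{(p)}$; this yields $\Phi$ of the stated shape, and in case (ii)(b) the identification $\CP^p = P(W)$ via $I$. In case (ii)(a) the underlying map $f_p$ is a constant-curvature conformal minimal immersion into $\CP^{2p}$ which is the $p$-th Gauss bundle of a full holomorphic curve; by the (local form of the) classification of constant-curvature conformal minimal immersions of the $2$-sphere into $\CP^m$ of Bolton--Jensen--Rigoli--Woodward \cite{BJRW}, $f_p$ is congruent to $V_p^{(2p)}$ and hence $f$ is congruent to $V_0^{(2p)}$, giving $\Phi = F_p \oplus \spa\{e_n\}$ after moving $W$ to $\spa\{e_1,\dots,e_{n-1}\}$. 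In the cases where the underlying map is required to be totally isotropic --- case (ii)(a) always, and case (i) whenever $F$ is totally isotropic --- the total isotropy translates, exactly as in \cite[\S3]{JiaoLi Qn}, into the condition $U^TU = W_0$ of \eqref{UTU} on the conjugating unitary matrix, which in turn forces $n$ to be odd (the palindromic antidiagonal $W_0$ being a \emph{symmetric} unitary matrix only in odd dimension, i.e.\ $n-1$ even).

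\emph{Step 4 (the ``further'' assertion, the converse, and the main obstacle).} For the refinement in (i) I would analyze, for the twisted Veronese $h = UV_0^{(n-1)}$ produced above and constrained to have image in $Q_{n-2}$, which real isotropy orders can occur: writing $F_i$ for the osculating (Frenet) frame of $h$, the condition $h_i \perp \ov{h_j}$ is the vanishing of the bilinear form $F_j^{\,T}F_i = V_j^{(n-1)\,T}(U^TU)V_i^{(n-1)}$, and a short linear-algebra argument on these relations shows that the real isotropy order $r$ is either bounded above by roughly $(n-3)/2$ or equals $n-2$ with $h$ totally isotropic; hence $r \ge n-4$ forces the totally isotropic configuration, which exists only for $n$ odd and then has $r = n-2$ and \eqref{UTU}, the finitely many small values of $n$ being checked directly. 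The converse is immediate: each map listed in (i), (ii)(a), (ii)(b) is of the form covered by the converse part of Proposition \ref{reducible-G2n} (or is a real mixed pair of a holomorphic curve of finite isotropy order), hence a reducible harmonic map of finite uniton number, and it is a full immersion of constant curvature because the Veronese maps $V_0^{(m)}$ and their Gauss bundles $V_p^{(m)}$ are. The step I expect to be the main obstacle is Step 3: passing from the classical rigidity theorems, normally stated for maps defined on all of $S^2$ or as global rigidity over a fixed simply connected domain, to the purely local conclusion needed here, together with the orientation bookkeeping required when identifying the conjugating matrix --- conjugation reverses orientation, and the real forms $\RP^{2p} \subset \CP^{2p}$ and $Q_{n-2} \subset \CP^{n-1}$ must be lifted to $S^{2p}$, respectively matched up, consistently.
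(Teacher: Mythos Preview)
Your overall architecture matches the paper's: reduce to the standard forms via the extension of \cite[Proposition 2.12]{bahy-wood-G2} and Proposition \ref{reducible-G2n}, then invoke rigidity of constant-curvature curves. Your Step 2 is a welcome explicit computation of something the paper asserts without proof (``clearly $\v$ has constant curvature iff $f$ does''), and your handling of cases (i) and (ii)(b) via the local form of Calabi's rigidity for holomorphic curves (the paper cites this as \cite[Theorem 1.1]{lawson}) is essentially the same as the paper's.

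The genuine gap is in case (ii)(a), and it is precisely the obstacle you flag but do not resolve. You propose to invoke a ``local form'' of the Bolton--Jensen--Rigoli--Woodward classification \cite{BJRW}, but that result is proved for maps with domain $S^2$ and uses global vanishing of holomorphic differentials; there is no off-the-shelf local version. More seriously, you have not excluded the possibility that the constant curvature is non-positive, in which case there is no local isometry to $S^2$ at all and no $\Phi$ to produce. The paper handles (ii)(a) differently: it lifts $f_p$ to a minimal isometric immersion $M \to S^{2p}$ and splits on the sign of the curvature. For positive curvature it uses Minding's theorem to get a local isometry $\iota$ to an open subset of $S^2$, then Wallach's extension theorem \cite{wallach} (cf.\ Bryant \cite[Theorems 1.5, 1.6]{bryant}) to identify $f_p$ with the Veronese--Bor\r{u}vka immersion $V_p^{(2p)}$. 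For non-positive curvature it appeals to Bryant \cite[Theorems 2.3, 3.1]{bryant}: the curvature must then be zero and $f_p$ is locally an exponential-type harmonic map $H$ on $\C$; but such $H$ have \emph{all} Gauss bundles $G^{(m)}(H)$ non-zero, so by dimension some $G^{(m)}(H)$ fails to be orthogonal to $H$, forcing $H$ to have finite isotropy order --- contradicting the infinite isotropy order assumed in (ii). This elimination of the flat case is the missing ingredient in your argument; once you supply it (or an equivalent), the rest of your outline goes through. Your Step 4 sketch of the linear-algebra argument for the ``further'' clause is a reasonable expansion of what the paper simply cites from \cite[Proposition 3.2]{JiaoLi Qn}.
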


\begin{proof}
In case (i), as explained at the start of this section, results in Sections 1--4 of \cite{bahy-wood-G2} extend to harmonic maps of finite uniton number.  In particular \cite[Proposition 2.12]{bahy-wood-G2} extends to show that $\v$ is a real mixed pair $f\oplus \ov{f}$.  Clearly $\v$ is full and has constant curvature if and only if $f$ has these properties.  By a theorem of E.~Calabi (see \cite[Theorem 1.1]{lawson}, this happens if and only if, locally, $f$ is isometric to a Veronese map $V_0^{(m)}$; by fullness $m=n-1$. The statement for $r \geq n-4$ is proved in \cite[Proposition 3.2]{JiaoLi Qn}.

\medskip

In case (ii), $\v$ is described by case (a) or (b) of Proposition \ref{reducible-G2n} with choice of $W$ as in Remark \ref{rem:reducible-G2n}(ii).

\indent\indent In case (ii)(a), $f_p$ has values in $\RP^{2p}$; in fact, by taking account of orientations, $f_p$ defines a minimal isometric immersion
 $M \to S^{2p}$ \cite[\S 3C]{eells-wood}; it is of constant curvature since $\v$ is. If that curvature is positive, then, by Minding's theorem, each point of $M$ has a connected open neighbourhood $A$ and an isometry $\iota:A \to \iota(A)$ to a connected open subset of $S^2$. Then, by a result of N.~R.~Wallach \cite[Theorem 1.1]{wallach},
cf.\ R.~L.~Bryant \cite[Theorems 1.5 and 1.6]{bryant},
{$f_p|_A = G \circ \iota$ where $G$ is is isometric to the Veronese--Bor\r{u}vka map given above.
(See \cite[Theorem 5.2]{calabi-spheres} and
\cite[p.~103]{doCarmo-W} for related results.)}

If the constant curvature is non-positive then, by
Bryant \cite[Theorems 2.3 and 3.1]{bryant},
 it is zero and for a small enough open set $A$, there is an isometry $\iota$ from $A$ to an open subset of $\C$ such that $f_p = H\circ \iota$ where $H$ is a harmonic map on $\C$ given by exponential formulae.
From these formulae, it is clear that all 
$G^{(m)}(H)$ are non-zero.  Then, for dimension reasons, there must be some $m$ such that $G^{(m)}(H)$ is not perpendicular to $H$, so $H$ is of finite isotropy order, contradicting the hypothesis for part (ii) that  $\v$ and so $H$ is of infinite isotropy order. 

\indent\indent In case (ii)(b), $\v$ has constant curvature if and only if the holomorphic map $f:M \to P(W) \cong \CP^p$ does;
again by
\cite[Theorem 1.1]{lawson}, this holds if and only if  $F$ is  locally isometric to a Veronese map.
\end{proof}

\begin{remark} 
\begin{enumerate}
\item[(i)] In case (i) with $r \geq n-4$ and case (ii)(a), we can take $U$ to be the matrix $U_0$ described above.

\item[(ii)] The theorem holds \emph{globally}, i.e., we can take $A = M$, if $M$ together with its induced metric, is isometric to an open connected subset of $S^2$. In particular, the Riemann surface $M$ is conformally equivalent to an open connected subset of the Riemann sphere; according to  \cite{wiki}, this holds if and only if $M$ is conformally equivalent to the Riemann sphere or to the complex plane with  finitely many closed intervals parallel to the real axis removed, such an $M$ is called \emph{planar} or \emph{schlichtartig}.
\end{enumerate}
\end{remark}

\subsection{Finite uniton vs finite type} \label{sec:finite type}
As in \cite{APW1}, we use Segal's \emph{Grassmannian model} \cite{segal}; this associates to an extended solution $\Phi$  of a harmonic map $\v:M \to \U(n)$  the family of closed subspaces $W(z),~z\in M,$  of the Hilbert space  $L^2(S^1,\C^n)$, defined by
\begin{equation}\label{W-def}
W(z)=\Phi(\cdot,z)\H_+,
\end{equation}
where  $\H_+$ is the \emph{Hardy space} of $\C^n$-valued functions, i.e., the closed subspace of  $L^2(S^1,\C^n)$ consisting of Fourier series whose negative coefficients vanish.
The subspaces $W(z)$ form the fibres of a smooth subbundle $W$ of the \emph{trivial bundle} $\HH := M \times L^2(S^1,\C^n)$ over the Riemann surface, see, for example, \cite[\S 3.1]{APW1});
we abbreviate \eqref{W-def} to $W= \Phi\H_+$.
For a characterization of the bundles $W$ which occur as $\Phi\H_+$, see \cite[\S 7.1]{pressley-segal}, or \cite[\S 3.1]{APW1} for an alternative approach; for such a $W$ there is a \emph{unique} $\Phi$ such that $W = \Phi\H_+$\,.

We denote by $S$ the \emph{forward shift} on $L^2(S^1,\C^n)$ defined by
$$
(Sf)(\la)=\la f(\la), \qquad (f \in L^2(S^1,\C^n), \ \la\in S^1),
$$
and by $\pa_z$ and $\pa_{\bar{z}}$ differentiation with respect to  $z$ and $\bar{z}$ respectively, where $z$ is a local  (complex) coordinate on $M$; note that all equations below are independent of the choice of local coordinate.
The extended solution equation \eqref{extsol} for $\Phi$ is equivalent to the pair of equations  for $W = \Phi\H_+$:
\begin{equation} \label{W-eq}
S\, \pa_z \Gamma(W)\subseteq \Gamma(W) \quad \text{ and } \quad
\pa_{\bar{z}}\Gamma(W)\subseteq \Gamma(W)\,,
\end{equation}
see \cite{segal, guest-book};  we say that \emph{$W$ is an extended solution} if it satisfies these equations. Note that the second equation says that $W$ is a holomorphic subbundle of $\HH$.

  Let $W(z)=\Phi(\cdot,z)\H_+$ be an extended solution associated to a harmonic map $\v:M\to \U(n)$.  By filling out zeros, $$W_{(i)}:=W+\pa_z W+\cdots+\pa^i_zW,\quad i=0,1,\ldots$$
 gives a nested sequence of  holomorphic subbundles of $\HH_+$, which is called the \emph{Gauss sequence of\/ $W$} \cite{APW1,svensson-wood-unitons}.   The subbundles $W_{(i)}$ clearly satisfy \eqref{W-eq}, and so are extended solutions; these give a sequence of harmonic maps $\v_i:M\to \U(n)$ called the \emph{Gauss sequence of $\v = \v_0$}.
 In fact, choosing the minus sign in \eqref{uniton} as in \S \ref{subsec:Grass}, the harmonic map
$\v_{i}$ is obtained from $\v_{i-1}$ by adding the uniton $\image {A^{\v_{i-1}}_z}$, see \cite{APW1,svensson-wood-unitons}.
Then the finiteness criterion, Proposition \ref{S-stabilizes}, reads as follows: \emph{a harmonic map $\v$ is of finite uniton number if and only if its Gauss sequence  terminates, i.e., some $\v_i$ is constant \cite{APW1}}.

If, now, $\v$ is a harmonic map from $M$ to $G_k(\C^n)$, from {\cite[Lemma (2.2) and Remark (2.6)(iv)]{wood-Un}}, we see that \emph{the harmonic sequence  $\bigl(G^{(i)}(\v)\bigr)$ of $\v$ coincides with its Gauss sequence if $\v:M\to  G_k(\mathbb{C}^n)$ is $\pa''$-irreducible}. This leads to the following generalization  of a well-known result by J. G. Wolfson \cite[Theorem 3.6]{wolfson} that states that \emph{the harmonic sequence of a harmonic map from $S^2$ to $G_k(\C^n)$ terminates} (i.e., $G^{(i)}(\v)$ is zero for some $i$).

\begin{theorem} \label{th:wolfson}
A harmonic map $\v:M\to G_k(\C^n)$  is of finite uniton number if and only if its harmonic sequence terminates.
\end{theorem}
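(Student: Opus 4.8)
The plan is to deduce the equivalence from the criterion recorded just before the statement --- \emph{$\v$ is of finite uniton number if and only if its Gauss sequence $\{\v_i\}_{i\geq 0}$ (viewed as a sequence of maps into $\U(n)$) terminates in a constant map} --- together with the fact, also recalled above from \cite{wood-Un}, that this Gauss sequence coincides with the harmonic sequence $\bigl(G^{(i)}(\v)\bigr)_{i\in\Z}$ when $\v$ is $\pa''$-irreducible, and the bookkeeping principle that finite uniton number is inherited, in both directions, by $\v^{\perp}$, by $G'(\v)$ and by $G''(\v)$ (equivalently, is preserved by forward and backward replacement, cf.\ Remark \ref{rem:irr}). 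Since reversing the orientation of $M$ interchanges $G'$ and $G''$, sends $G^{(i)}(\v)$ to $G^{(-i)}(\v)$, and preserves finite uniton number, I may take ``the harmonic sequence terminates'' to mean $G^{(r)}(\v)=0$ for some $r\geq 1$.

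For the implication ``finite uniton number $\Rightarrow$ the harmonic sequence terminates'': if $\v$ is strongly isotropic the $G^{(i)}(\v)$ are mutually orthogonal, so only finitely many are non-zero and we are done. Otherwise $\rk G^{(i)}(\v)$ is non-increasing for $i\geq 0$ (as $G^{(i+1)}(\v)=\image A'_{G^{(i)}(\v)}$), hence stabilises at some value $s$ for $i\geq i_0$. Suppose $s>0$. Then each $G^{(i)}(\v)$ with $i\geq i_0$ is $\pa'$-irreducible, so by \cite[Proposition 2.3]{burstall-wood} $G''\bigl(G^{(i+1)}(\v)\bigr)=G^{(i)}(\v)$, and therefore $G^{(j)}(\v)$ is also $\pa''$-irreducible for $j\geq i_0+1$. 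Applying the $\pa''$-irreducible case of the Gauss--harmonic coincidence at each step, the Gauss sequence of $G^{(i_0+1)}(\v)$ is exactly the tail $\bigl(G^{(j)}(\v)\bigr)_{j\geq i_0+1}$, every term of which has rank $s$. But $G^{(i_0+1)}(\v)$ is of finite uniton number, so that Gauss sequence ends in a constant map, of rank $0$ or $n$, contradicting $0<s\leq k<n$. Hence $s=0$ and the harmonic sequence terminates.

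For the converse, assume $G^{(r)}(\v)=0$ with $r\geq 1$. Then $A'_{G^{(r-1)}(\v)}=0$, i.e.\ $G^{(r-1)}(\v)$ is antiholomorphic; its orthogonal complement is then holomorphic, hence a uniton for the identity, so $G^{(r-1)}(\v)$ is of finite uniton number. Reading the harmonic sequence backwards and using that $G^{(i-1)}(\v)$ is of finite uniton number whenever $G^{(i)}(\v)=G'\bigl(G^{(i-1)}(\v)\bigr)$ is, we obtain in turn that $G^{(r-2)}(\v),\ldots,G^{(1)}(\v)$ and finally $\v=G^{(0)}(\v)$ are of finite uniton number.

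The main obstacle is the bookkeeping principle invoked in both halves. When the relevant map is $\pa''$-irreducible it is immediate from the coincidence of the two sequences, but in the reducible cases the $\U(n)$-valued Gauss bundle $\v_1=\bigl(\v\ominus G'(\v^{\perp})\bigr)\oplus G'(\v)$ is strictly larger than $G^{(1)}(\v)$, and one must split off the extra (anti)holomorphic summand and control its uniton number separately. The clean way to organise this is an induction on $n$: nilconformality of a finite uniton map forces $\v$ or $\v^{\perp}$ to be $\pa'$-reducible, whereupon the Reduction Theorem \cite[Theorem 4.1]{burstall-wood} exhibits it as built from a harmonic map into a lower-dimensional Grassmannian, to which the inductive hypothesis applies; alternatively one can construct a polynomial extended solution directly from the terminating flag $\bigl(\sum_{i}G^{(i)}(\v)\bigr)$ and invoke Proposition \ref{S-stabilizes}. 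Either route is routine once set up, but it is the step that requires genuine care.
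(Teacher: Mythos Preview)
Your argument is essentially correct, but both directions are more laboured than necessary, and the very ``bookkeeping principle'' you flag as the main obstacle can be avoided entirely.

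For ``terminates $\Rightarrow$ finite uniton number'' you work backwards from the antiholomorphic term and need backward inheritance of finite uniton number under $G'$. The paper instead goes \emph{forward}: each step $G^{(i-1)}(\v)\mapsto G^{(i)}(\v)$ is literally the addition of the uniton $G^{(i-1)}(\v)\oplus G^{(i)}(\v)$ (cf.\ \S\ref{subsec:Grass}), so if $G^{(r)}(\v)=0$ then a constant map is reached from $\v$ by adding $r$ unitons. One sentence, no bookkeeping, no Reduction Theorem, no induction on $n$.

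For ``finite uniton number $\Rightarrow$ terminates'' you wait until the ranks stabilise and then argue that both irreducibilities hold in the tail. The paper short-circuits this with a single observation: $G'(\v)$ is \emph{always} $\pa''$-irreducible, because $A''_{G'(\v),\v}$ is minus the adjoint of the surjection $A'_{\v,G'(\v)}$ and hence injective. One therefore passes immediately to $G'(\v)$ (still of finite uniton number, being one uniton away from $\v$), where the Gauss and harmonic sequences coincide and the finiteness criterion applies directly.

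One small slip in your version: a constant map into $G_s(\C^n)$ has rank $s$, not ``rank $0$ or $n$''. Your contradiction still goes through, since a constant $\pa''$-irreducible map must have rank $0$ (its $G''$ vanishes), but the stated reason is off.
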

\begin{proof}
Again, choosing the minus sign in \eqref{uniton},  the  Gauss bundle $G^{(i)}(\v)$ is obtained from $G^{(i-1)}(\v)$ by adding the uniton $\alpha$, with $\alpha =G^{(i-1)}(\v)\oplus G^{(i)}(\v)$, as seen in {\S\ref{subsec:Grass}}.  Hence, if the harmonic sequence terminates, then we can obtain the constant map by adding a finite number of unitons to $\v$. This means that $\v$ is of finite uniton number.

The converse result follows directly from our previous discussion when $\v$ is $\pa''$-irreducible. Suppose that $\v$ is  of finite uniton number but not  $\pa''$-irreducible. Clearly,
the first Gauss bundle $G^{(1)}(\v) = G'(\v)$ is also of finite uniton
 number; moreover,  $G^{(1)}(\v)$ is $\pa''$-irreducible since
  $A''_{G^{(1)}(\v), \v}$ is minus the adjoint of the surjective map
   $A'_{\v,G^{(1)}(\v)}$. So the Gauss sequence of $G^{(1)}(\v)$
    terminates and equals its harmonic sequence,  hence the
     harmonic sequence of $\v$ also terminates.
\end{proof}

A harmonic map is said to be of \emph{finite type} if it admits an  associated extended solution of the form $W=\exp(\xi z)\H_+$, with $\xi=\sum_{i=-1}^d\xi_k\lambda^k$ constant with respect to $z$. The harmonic maps of finite type can also be obtained by using integrable systems methods from a certain Lax-type equation \cite{BurstallPedit,guest-book} and they play an important role in the theory of harmonic maps from tori into symmetric spaces. For example, it is known (see \cite{pacheco-tori} and references therein) that \emph{all non-constant harmonic tori in the $n$-dimensional Euclidean sphere $S^n$ or the complex projective space $\CP^n$ are either of finite type or of finite uniton number}.

An extended solution $\Phi$ and the corresponding $W = \Phi\H_+$ is called \emph{$S^1$-invariant} if $f \in W$ implies that $f_\mu \in W$ for all $\mu \in S^1$, where $f_\mu(\lambda) :=f(\mu\lambda)$, \ $\la \in S^1$.  For example, holomorphic and anti-holomorphic maps into a (complex) Grassmmanian admit $S^1$-invariant extended solutions; conversely, if $\Phi$ is $S^1$-invariant, $\v = \Phi_{-1}$ maps into a Grassmannian.
In \cite[Corollary 1]{pacheco-tori} it was proved that
\emph{no non-constant harmonic map $\v:T^2\to G_k(\mathbb{C}^n)$ associated to an $S^1$-invariant extended solution is of finite type}.   We now  generalise this result.

\begin{theorem} \label{finite type constant}
  Let $\v:T^2\to G_k(\mathbb{C}^n)$ be a harmonic map simultaneously of finite uniton number and finite type. Then $\v$ is constant.
\end{theorem}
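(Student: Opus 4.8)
The strategy is to combine two facts that the paper has already established: (1) by Theorem \ref{th:wolfson}, a harmonic map $\v:T^2 \to G_k(\C^n)$ of finite uniton number has a terminating harmonic sequence, so some Gauss bundle $G^{(i)}(\v)$ vanishes; and (2) finite type maps are built from a Lax-type equation whose solutions are, roughly speaking, ``non-degenerate'' in a way that forbids the harmonic sequence from terminating unless the map is constant. The cleanest route is to show that a \emph{non-constant} harmonic map of finite type into a Grassmannian cannot have a terminating harmonic sequence, and then invoke Theorem \ref{th:wolfson} to get a contradiction with the finite uniton hypothesis.

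First I would recall that a harmonic map of finite type admits an associated extended solution $W = \exp(\xi z)\H_+$ with $\xi = \sum_{i=-1}^d \xi_i \la^i$ constant in $z$, and that the relevant structure is \emph{doubly periodic}: on a torus $T^2$ the extended solution is quasi-periodic, and the associated family of subspaces $W(z) \subset L^2(S^1,\C^n)$ traces out (after the usual reduction) a periodic orbit of a finite-dimensional integrable system. The key point is that for a genuinely finite-type map the ``polynomial Killing field'' $\xi(z)$ stays in a fixed adjoint orbit of a nilpotent-free element, so $A^{\v}_z$ is \emph{never} nilpotent unless $\v$ is constant --- whereas Proposition \ref{S-stabilizes} and the discussion following it show that a finite uniton number harmonic map is \emph{nilconformal}, i.e.\ has $A^{\v}_z$ nilpotent. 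Thus the two conditions force $A^{\v}_z = 0$ identically on the torus. Once $A^{\v}_z \equiv 0$ on the (connected) torus, the map $\v$ is (anti)holomorphic and constant: indeed $A^{\v}_z \equiv 0$ means $\v^{-1}d\v$ has no $dz$-part, and combined with the reality (unitarity) this forces $d\v \equiv 0$, so $\v$ is constant.

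Concretely, the steps are: (i) invoke the finiteness criterion's corollary that $\v$ of finite uniton number is nilconformal, so $A^{\v}_z$ is nilpotent everywhere on $T^2$; (ii) invoke the finite type structure to show the associated Killing field $\xi(z) = \xi_d \la^d + \cdots + \xi_{-1}\la^{-1}$ has $\xi_{-1}$ conjugate (along the orbit) to a \emph{non-nilpotent} matrix whenever $\v$ is non-constant --- equivalently, a non-constant finite type harmonic map into a Grassmannian has its harmonic sequence $(G^{(i)}(\v))_{i\in\Z}$ bi-infinite and non-terminating, which is exactly what one extracts from \cite{pacheco-tori} and the finite type formalism (the Gauss sequence of $W = \exp(\xi z)\H_+$ never terminates because $\pa_z^i W$ keeps growing in dimension, being governed by $\xi_{-1}^i$); (iii) apply Theorem \ref{th:wolfson}: finite uniton number forces the harmonic sequence to terminate, contradicting (ii) unless $\v$ is constant. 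Alternatively, packaging (i)+(ii): nilpotency of $A^{\v}_z = -A'_{\v}$ on all of $T^2$ together with the finite type normal form forces $A^{\v}_z \equiv 0$, hence $\v$ constant.

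The main obstacle I anticipate is making step (ii) rigorous: one must argue carefully that the finite type condition genuinely prevents the harmonic sequence from terminating, which requires unwinding the relationship between the polynomial Killing field $\xi$ and the Gauss sequence $W_{(i)} = W + \pa_z W + \cdots + \pa_z^i W$ --- in particular showing $\dim W_{(i)}/W$ is strictly increasing (so the Gauss sequence, and hence by $\pa''$-irreducibility passage the harmonic sequence, never stabilizes) unless $\xi_{-1}$ is trivial, i.e.\ $\v$ is constant. This is where one leans on the detailed analysis in \cite{pacheco-tori}; the version there is stated for $S^1$-invariant extended solutions, and the needed observation is that a finite type map into a Grassmannian is, after the standard reductions, exactly of that form, or else can be handled by the same Lax-equation argument. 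Everything else --- the nilconformality of finite uniton maps, the termination criterion of Theorem \ref{th:wolfson}, and the final deduction that $A^{\v}_z \equiv 0 \Rightarrow \v$ constant on a connected domain --- is immediate from results already in the paper.
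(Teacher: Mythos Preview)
Your proposal has a genuine gap in step (ii). You assert that a non-constant finite type harmonic map into a Grassmannian has $\xi_{-1}$ non-nilpotent, equivalently that its harmonic sequence cannot terminate, and you claim this ``is exactly what one extracts from \cite{pacheco-tori}''. But the result in \cite{pacheco-tori} (Corollary~1 there) is stated only for harmonic maps admitting an \emph{$S^1$-invariant} extended solution, and your suggested fix --- that ``a finite type map into a Grassmannian is, after the standard reductions, exactly of that form'' --- is false. A general finite type harmonic map $\v:T^2\to G_k(\C^n)$ need not have any $S^1$-invariant associated extended solution; the extended solution $\exp(\xi z)\H_+$ is typically far from $S^1$-invariant. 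So neither the direct nilpotency argument for $\xi_{-1}$ nor the appeal to \cite{pacheco-tori} goes through for $\v$ itself.

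The paper's proof avoids this obstacle by an indirection you did not anticipate. Rather than work with $\v$ directly, it passes to the \emph{last non-zero} Gauss bundle $\psi$ in the (terminating, by Theorem~\ref{th:wolfson}) harmonic sequence. This $\psi$ is anti-holomorphic, and anti-holomorphic maps \emph{do} admit $S^1$-invariant extended solutions. The missing ingredient is then \cite[Theorem~9.2]{correia-pacheco}: finite type is preserved under taking Gauss bundles, so $\psi$ is still of finite type. Now \cite[Corollary~1]{pacheco-tori} legitimately applies to $\psi$, forcing $\psi$ constant. Finally, since every $G^{(i)}(\v)$ with $i\geq 1$ is $\pa''$-irreducible, a constant such bundle is impossible unless it equals $\v$ itself; hence $\psi=\v$ and $\v$ is constant. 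Your plan had the right overall shape (finite uniton $\Rightarrow$ terminating sequence, then contradict finite type), but the bridge from finite type to \cite{pacheco-tori} requires these two extra moves.
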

\begin{proof}
  Let $\v:T^2\to G_k(\mathbb{C}^n)$ be a harmonic map of finite uniton number. Then the sequence
  $\v, G^{(1)}(\v), G^{(2)}(\v),\ldots$ of Gauss bundles terminates. Let $\psi:T^2\to G_r(\mathbb{C}^n)$ be the last non-zero harmonic map in this sequence, with $r\leq k$. This is an anti-holomorphic map, hence $\psi$ admits an $S^1$-invariant extended solution.

 If $\v$  is also of finite type, then any Gauss bundle $G^{(i)}(\v)$ gives a harmonic map of finite type (see \cite[Theorem 9.2]{correia-pacheco}). In particular, besides admitting an $S^1$-invariant extended solution, $\psi$ is of finite-type. By \cite[Corollary 1]{pacheco-tori} as stated above,  $\psi$ must be constant.
But any  non-zero Gauss bundle $G^{(i)}(\v)$, $i \geq 1$, is $\pa''$-irreducible as above, so
$\v=\psi$. Hence $\v$ is constant.
\end{proof}

\end{document}